\documentclass[10pt,reqno]{amsart}

\usepackage{amscd}
\usepackage{amsfonts}
\usepackage{amsmath}
\usepackage{amsthm}
\usepackage{amssymb}
\usepackage{enumitem}
\usepackage{setspace}
\usepackage{version}
\usepackage{tikz}
\usepackage{needspace}

\usepackage{hyperref}

\pdfpagewidth 8.3in
\pdfpageheight 11.7in
\setlength\textwidth{6.15in}
\setlength\oddsidemargin{0.1in}
\setlength\evensidemargin{0.1in}

\parskip 1mm
\onehalfspace

\newcommand{\eps}[0]{\varepsilon}
\renewcommand{\bar}[1]{\overline{#1}}

\newcommand{\R}[0]{\mathbb{R}}
\newcommand{\C}[0]{\mathbb{C}}
\newcommand{\N}[0]{\mathbb{N}}
\newcommand{\T}[0]{\mathbb{T}}
\newcommand{\Z}[0]{\mathbb{Z}}
\newcommand{\Q}[0]{\mathbb{Q}}
\newcommand{\PP}[0]{\mathcal{P}}

\newcommand{\tttt}[0]{{$\times 2,\, \times 3$}}

\theoremstyle{plain}
\newtheorem{lemma}{Lemma}[section]
  \newtheorem{proposition}[lemma]{Proposition}
  \newtheorem{cor}[lemma]{Corollary}
  \newtheorem{thm}[lemma]{Theorem}
  \newtheorem*{thm*}{Theorem}

  \newtheorem*{fact*}{Fact}
  \newtheorem*{claim*}{Claim}
  \newtheorem{problem}[lemma]{Problem}

\theoremstyle{definition}
  \newtheorem{definition}[lemma]{Definition}

\theoremstyle{remark}
  \newtheorem{remark}[lemma]{Remark}

\setcounter{tocdepth}{1}
\begin{document}

\title{A solution to the pyjama problem}
\author{Freddie Manners}
\address{Mathematical Institute, Radcliffe Observatory Quarter, Woodstock Road, Oxford OX2 6GG}
\email{Frederick.Manners@maths.ox.ac.uk}

\begin{abstract}
  The ``pyjama stripe'' is the subset of $\R^2$ consisting of a vertical strip of width $2 \eps$ around every integer $x$-coordinate.  The ``pyjama problem'' asks whether finitely many rotations of the pyjama stripe around the origin can cover the plane.

  The purpose of this paper is to answer this question in the affirmative, for all positive $\eps$.  The problem is reduced to a statement closely related to Furstenberg's \tttt{} Theorem from topological dynamics, and is proved by analogy with that result.
\end{abstract}

\maketitle

\tableofcontents

\section{Introduction}
Fix some $\eps > 0$.  Let $E = E(\eps)$ denote a ``pyjama stripe'' viewed as a subset of $\C$:
\[
  E := \{ z \in \C \,:\, \Re(z)\, (\bmod\, 1) \in (-\eps, \eps) \}
\]
i.e.~a vertical strip of width $2 \eps$ around each integral $x$-coordinate.
(The term ``pyjama stripe'' is a reference to the resemblance of $E$ to the pattern on a pair of stripy pyjamas.)  For $\theta$ a unit complex number, write $E_\theta := \theta^{-1} E$ for the corresponding rotated pyjama stripe, i.e.~the set obtained by rotating $E$ by $\theta^{-1}$ about the origin.

\begin{problem}[Pyjama problem]
  \label{pyjama-problem}
  Do finitely many rotations of $E$ cover the plane?  I.e., does there exist a finite collection $\{\phi_1, \dots, \phi_k\}$ of unit complex numbers such that $\bigcup_{i} E_{\phi_i} = \C$?
\end{problem}

The goal of this paper is to answer this in the affirmative, for all positive $\eps$.

The problem was first stated in \cite{origins}, and was the subject of the more recent note \cite{mmr}, which inspired many of the ideas of this paper.  For a brief history and background of the problem, the reader is referred to \cite{mmr}[Section 1].

Our proof proceeds by reducing the pyjama problem to a statement (Lemma \ref{infinitary-rationality-lemma}) that resembles Furstenberg's \tttt{} Theorem \cite{furstenberg}[Part IV], but which -- as far as the author is aware -- does not appear in the literature.  The proof of this result follows that of the \tttt{} Theorem very closely, but faces significant technical complications, which bring into play a certain amount of $p$-adic analysis.  Where possible, we have tried to ensure the paper is nonetheless readable by someone without an extensive background in that field. 

\subsection{Acknowledgements}
The author is particularly grateful to Jonathan Lee for extensive discussions of these ideas; and also to Ben Green, Sean Eberhard, Przemys\l{}aw Mazur and Rudi Mrazovi\'{c} for their comments and scrutiny.

\section{Overview of the proof}
\subsection{Statement}
For concreteness, we will work from the outset with an explicit finite set of rotations.  At this stage, this choice is completely unmotivated; the motivation is spread over the next few sections.

We fix some notation.  Take $\PP_5 = 1 + 2 i$, $\PP_{13} = 2 + 3 i$; these are primes of $\Z[i]$ dividing $5$ and $13$ respectively.  Write $\theta_5 = \PP_5 / \bar{\PP_5}$ and $\theta_{13} = \PP_{13} / \bar{\PP_{13}}$, which in particular are unit complex numbers with rational coefficients (i.e.~elements of $\Q(i)$).

We restate the result in this setting.
\begin{thm}
  \label{pyjama-theorem}
  Let $\eps > 0$ be arbitrary. Define
  \[
    \Theta_N = \{ {\theta_5}^r {\theta_{13}}^s \,:\, r, s \in \Z,\, 0 \le r,\, s \le N \} \ .
  \]
  Also, for $n \ge 1$ define $\zeta_1^{(n)},\, \zeta_2^{(n)},\, \zeta_3^{(n)}$ to be any triple of unit complex numbers satisfying
  \[
    n \left(\zeta_1^{(n)} + \zeta_2^{(n)}\right) = \zeta_3^{(n)}
  \]
  and let $\Theta' = \Theta'(n, N) = \zeta_1^{(n)}\, \Theta_N \cup \zeta_2^{(n)}\, \Theta_N \cup \zeta_3^{(n)}\, \Theta_N$.

  Then there exist $n,\, N$ (depending on $\eps$) such that $\bigcup_{\theta \in \Theta'} E_\theta = \C$.
\end{thm}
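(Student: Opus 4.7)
The plan is to argue by contradiction, in the spirit of Furstenberg's original topological-dynamics proof of the \tttt{} theorem.  Suppose Theorem~\ref{pyjama-theorem} fails.  Then for every $n \ge 1$ and every $N \ge 1$ there is a point $z = z_{n,N} \in \C$ not lying in any $E_\theta$ with $\theta \in \Theta'(n,N)$.  Writing $w_i^{(n,N)} := \zeta_i^{(n)} z_{n,N}$ for $i = 1, 2, 3$, this unpacks to
\[
  \Re\bigl(\theta\, w_i^{(n,N)}\bigr) \bmod 1 \in [\eps,\, 1-\eps]
  \qquad \text{for every } \theta \in \Theta_N \text{ and every } i,
\]
together with the additive relation $w_3^{(n,N)} = n\bigl(w_1^{(n,N)} + w_2^{(n,N)}\bigr)$ that is built into the definition of the $\zeta_i^{(n)}$.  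The triple $(w_1, w_2, w_3)$, linked by this additive relation, is the essential object of study.

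My first step is to remove the finitary cut-off in $N$ by a compactness/diagonal argument, sending $N \to \infty$ with $n$ held fixed.  The natural ambient space in which to take this limit is not $\C$ alone: a single application of $\theta_5^r$ introduces $5$-adic denominators of size $5^r$, and similarly for $\theta_{13}^s$, so the orbit of a generic point under $\Theta_\infty := \{\theta_5^r \theta_{13}^s : r, s \ge 0\}$ genuinely inhabits something like $\C \times \Z_5 \times \Z_{13}$.  Working in such a compactification, I expect the limit to produce points $w_i^{(n)}$ still satisfying $w_3^{(n)} = n(w_1^{(n)} + w_2^{(n)})$ together with the infinitary bad-point condition $\Re(\theta w_i^{(n)}) \bmod 1 \in [\eps, 1-\eps]$ for every $\theta \in \Theta_\infty$.

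At this point the technical heart of the paper, Lemma~\ref{infinitary-rationality-lemma}, takes over: I invoke it as a black box to conclude that each $w_i^{(n)}$ lies in a very restricted ``rational'' set $S \subset \C$, consisting of elements of $\Q(i)$ with controlled denominators, and in particular locally finite in $\C$.  Note that $S$ is closed under negation, because the target set $[\eps, 1-\eps]$ is symmetric on $\R/\Z$.

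To close the argument, I let $n \to \infty$.  The witnesses $w_i^{(n)}$ can be chosen to lie in a bounded region of $\C$, so the relation $w_1^{(n)} + w_2^{(n)} = w_3^{(n)}/n$ forces $w_1^{(n)} + w_2^{(n)} \to 0$, whence $w_2^{(n)}$ approaches $-w_1^{(n)} \in S$.  Local finiteness of $S$ then compels $w_2^{(n)} = -w_1^{(n)}$ for all sufficiently large $n$, and so $w_3^{(n)} = 0$.  But $0 \notin S$: taking the trivial rotation $\theta = 1 \in \Theta_\infty$, the bad-point condition demands $\Re(0) = 0 \in [\eps, 1-\eps] \pmod 1$, which is false.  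This is the contradiction sought.  The genuinely difficult step is of course Lemma~\ref{infinitary-rationality-lemma}; within the reduction above, the main obstacle is setting up the $p$-adic framework in which the compactness passage to $\Theta_\infty$ can be carried out rigorously.
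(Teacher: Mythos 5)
Your overall shape is right -- the additive relation $n(\zeta_1^{(n)}+\zeta_2^{(n)})=\zeta_3^{(n)}$ is exploited exactly to force $\zeta_3 z$ into a covered region once $\zeta_1 z$ and $\zeta_2 z$ are known to be rational obstructions -- but the mechanism you use to close the argument has a genuine gap. The rationality lemma (in either its finitary form, Lemma \ref{rationality-lemma}, or the infinitary form, Lemma \ref{infinitary-rationality-lemma}) does \emph{not} conclude that the uncovered points lie in a locally finite rational set $S$: it concludes only that they lie within a fixed bounded distance (radius $20$, or the complex balls $\bar{B}_{1/2}^\C$) of such rational points. This fuzziness in the complex direction is irremovable -- the paper's remark after Lemma \ref{infinitary-rationality-lemma} notes that the conclusion cannot be strengthened to a finite set of torsion points, as the example $Y=\bar{B}_{1/10}^\C(0)$ shows. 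Consequently your step ``local finiteness of $S$ compels $w_2^{(n)}=-w_1^{(n)}$'' fails: $w_2^{(n)}$ approaching $-w_1^{(n)}$ only tells you the two radius-$20$ balls overlap, not that the witnesses coincide, and the final contradiction $w_3^{(n)}=0\notin S$ never materialises.

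Relatedly, the parameter $n$ is not one you send to infinity. In the paper it is \emph{fixed} as a function of $\eps$ by the rationality lemma: it is the common torsion order (denominator) of the obstructions, so that every obstruction lies within $20$ of $\frac{1}{n}D\Z[i]\setminus D\Z[i]$ with $n\ll_\eps 1$ but $|D|$ arbitrarily large. The correct closing step is then purely finitary: if $\zeta_i z\in\bar{B}_{20}(r_i)$ with $r_i\in\frac{1}{n}D\Z[i]$, then $n(r_1+r_2)\in D\Z[i]$, so $\zeta_3 z=n(\zeta_1 z+\zeta_2 z)$ lies within $40n$ of a point of the lattice $D\Z[i]$ itself -- and the crucial exclusion $r\notin D\Z[i]$ in the obstruction set, together with choosing $|D|>40n^2+20n$ so that the $40n$-neighbourhood of $D\Z[i]$ is covered, yields the contradiction. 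In other words, the contradiction comes from multiplication by $n$ clearing the denominators of the obstructions (landing in the covered lattice $D\Z[i]$), not from pinning the limit point down to $0$. With this substitution your argument becomes the paper's proof; as written, the limiting argument in $n$ cannot be repaired because the set of obstructions itself depends on $n$ and thickens by $O(1)$ in the complex direction.
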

\begin{remark} \ 
  \begin{enumerate}[label=(\roman*)]
    \item Finding a triple $\zeta_1^{(n)}, \zeta_2^{(n)}, \zeta_3^{(n)}$ as above is equivalent to finding a triangle (in $\C$) with side lengths $(n, n, 1)$; indeed, $\zeta_i^{(n)}$ correspond to the unit vectors in the direction of the edges of such a triangle.  By the converse to the triangle inequality, such a triangle exists, and it is straightforward to find one explicitly.
    \item Due to the infinitary nature of some of the methods used, the dependence of $n$ and $N$ (and thereby of $|\Theta'| = 3 (N+1)^2$) on $\eps$ given by our proof is completely ineffective.
    \item The significance of the numbers $5$ and $13$ is merely that they are primes of the form $4 k + 1$.  Any pair of such numbers would work, but again we fix these for concreteness.
  \end{enumerate}
\end{remark}

\subsection{Rational obstructions and periodic coverings}
It is noted in \cite{mmr}[Section 2] that \emph{periodic coverings} -- that is, choices of rotations $\phi_1, \dots, \phi_k$ such that all the sets $E_{\phi_i}$ have a common period lattice $\Lambda$ (of rank $2$) -- are attractive when approaching the pyjama problem.  Indeed, to check that $\bigcup_i E_{\phi_i} = \C$ for such a covering, it would suffice to check that some fundamental domain of $\Lambda$ was covered, which is essentially a finite task.

In particular, in the special case that all the rotations $\phi_i$ have rational coefficients -- i.e.~are elements of $\Q(i)$ -- we can choose some $D \in \Z[i]$ such that $D \phi_i \in \Z[i]$ for all $i$, and then deduce that $D\, \Z[i]$ is a period lattice.  Indeed, for any $z \in E_{\phi_i}$ and $r \in \Z[i]$ we have that $\phi_i\, z \in E$ (by definition), hence $\phi_i( z + D\, r) \in E$ (as $E$ is $\Z[i]$-periodic) and so we deduce $z + D\, r \in E_{\phi_i}$.  Note the sets $\Theta_N$ of rotations appearing in the statement of Theorem \ref{pyjama-theorem} fall within this special case.

However, it is also shown in \cite{mmr}[Theorem 2.3] that these periodic coverings cannot possibly cover all of $\C$ when $\eps$ is small (specifically, when $\eps < 1/3$).  The authors locate certain ``rational obstructions'', i.e.~points with a specific rational form which are guaranteed not to be covered by $\bigcup_i E_{\phi_i}$.  For example, if the $\phi_i$ are in $\Q(i)$ and $D$ is as above, then the point $\frac{1 + i}{2} D$ is not covered (for $\eps < 1/2$); this follows from the fact that if $\frac{r + s\, i}{t}$ is a unit vector (where $r$, $s$, $t$ are integers and the ratio is in its lowest terms) then necessarily $t$ is odd.

More generally, for any fixed $\eps > 0$ one can identify a finite set of tuples $(a, b, m)$ of integers such that, for any choice of $\phi_i \in \Q(i)$ and $D$ defined as above, the points $\frac{a + b\, i}{m} D$ are not covered. (Translates of these by $D\, \Z[i]$ are therefore also not covered.)  We term all of these ``rational obstructions''.  Their number increases without bound as $\eps \rightarrow 0$.

A simple example of a rational configuration with rotations $\{1, \theta_5\}$ is given in Figure \ref{fig-covering}, with the period square and the rational obstruction corresponding to $\frac{1 + i}{2} D$ indicated.
\begin{figure}
  \centering
  \begin{tikzpicture}
    \newcommand*{\bleed}{0.2}
    \newcommand*{\wvecx}{-0.2}
    \newcommand*{\wvecy}{0.15}

    \newcommand*{\slantrect}[4]{
      \draw[fill,black!50] (#1 + \wvecx, #2 + \wvecy) -- (#1 - \wvecx, #2 - \wvecy) -- (#3 - \wvecx, #4 - \wvecy) -- (#3 + \wvecx, #4 + \wvecy);
    }

    \foreach \i in {0, ..., 5} {
      \draw[fill, black!25] (\i - 0.25, 0) rectangle (\i + 0.25, 5);
    }

    \slantrect{0}{0}{15/4}{5}
    \slantrect{15/4}{0}{5}{5/3}
    \slantrect{0}{5/3}{5/2}{5}
    \slantrect{5/2}{0}{5}{10/3}
    \slantrect{0}{10/3}{5/4}{5}
    \slantrect{5/4}{0}{5}{5}

    \draw[thin,black,step=1.0] (-\bleed, -\bleed) grid (5 + \bleed, 5 + \bleed);
    \foreach \i in {0, ..., 5} {
      \draw[black] node [below] at (\i, -\bleed) {$\i$};
    }
    \foreach \i in {0, ..., 5} {
      \draw[black] node [left] at (-\bleed, \i) {$\i$};
    }

    \draw[thick,black,dashed] (0, 0) -- (5, 5/2);
    \draw[thick,black,dashed] (0, 5/2) -- (5, 5);
    \draw[thick,black,dashed] (0, 5) -- (5/2, 0);
    \draw[thick,black,dashed] (5/2, 5) -- (5, 0);

    \draw[fill,black] (5/2, 5/2) circle (0.1);
    \draw[fill,black] (3/2, 9/2) circle (0.1);
    \draw[fill,black] (9/2, 7/2) circle (0.1);
    \draw[fill,black] (1/2, 3/2) circle (0.1);
    \draw[fill,black] (7/2, 1/2) circle (0.1);

    \foreach \i in {1,...,4} {
      \draw (7.5, \i) node[right,black] {$=$};
    }

    \draw[fill, black!25] (6.5, 4.25) rectangle (7.5, 3.75);
    \draw (7.9, 4) node[right,black] {$E_1$};

    \draw[fill, black!50] (6.5, 3.25) rectangle (7.5, 2.75);
    \draw (7.9, 3) node[right,black] {$E_{\theta_5}$};

    \draw[black,thick,dashed] (6.5, 2) -- (7.5, 2);
    \draw (7.9, 2) node[right,black,align=left] {period \\ square};
    \draw[fill, black] (7.35, 1) circle (0.1);
    \draw (7.9, 1) node[right,black,align=left] {rational \\ obstruction};
  \end{tikzpicture}
  \caption{A periodic configuration $\{1,\,\theta_5\}$ showing the rational obstructions $\frac{1 + i}{2} D$.}
  \label{fig-covering}
\end{figure}
\\[\baselineskip]
Our strategy to overcome these obstructions is as follows.
\begin{itemize}
  \item We show that this is in some sense all that can go wrong.  That is, for suitably chosen $\phi_i \in \Q(i)$, the \emph{only} points in $\C \setminus \bigcup_i E_{\phi_i}$ are rational obstructions of the above form, or points reasonably close to them.  This result is termed the ``rationality lemma'' and is stated formally in Lemma \ref{rationality-lemma}.  Its proof will occupy the majority of the paper.  The rational rotations used correspond to the $\Theta_N$ in the statement of Theorem \ref{pyjama-theorem}.

  \item We then use a trick to deal with the missing points.  As this trick necessarily involves adding some irrational rotations to the set, we term it the ``irrational trick''; it corresponds to the $\zeta_i^{(n)}$ in the statement of Theorem \ref{pyjama-theorem}.  We note that this trick is essentially a variation of a technique used in \cite{mmr}[Theorem 3.1] to construct a covering with $\eps = 1/3 - 1/48$.  Section \ref{sec-irrational} describes this part of the argument.
\end{itemize}

The proof of the rationality lemma is by far the lengthier and harder part of the argument; we now turn to the strategy for proving that.

\subsection{The rationality lemma and topological dynamics}

The first stage in the proof of the rationality lemma is to deduce it from an analogous infinitary statement.  This process is very similar to (and inspired by) the infinitary reformulation of the pyjama problem itself in \cite{mmr}[Theorem 4.1], with a few notable differences -- most significantly that we are reformulating the rationality lemma rather than the pyjama problem.  The new version is Lemma \ref{infinitary-rationality-lemma}, the ``infinitary irrationality lemma''.

Our main reason for making this reformulation is that the new version bears a striking resemblance to Furstenberg's \tttt{} Theorem \cite{furstenberg}[Section IV].  Various extensions of Furstenberg's result are even closer, both formally and in spirit, to what we need: e.g.~work of Berend \cite{berend}, generalizing Furstenberg's result to arbitrary finite-dimensional connected compact abelian groups; and generalizations that allow an ``isometric direction'' pursued by Muchnik \cite{muchnik} and more recently by Wang \cite{wang}.  However, Lemma \ref{infinitary-rationality-lemma} does not follow from any of these results, or (as far as the author is aware) from any others in the published literature, although it could be seen as a case of a common generalization of all of them.  Here, we give a self-contained proof, by adapting Furstenberg's original argument -- or more accurately, a variant of it due to Boshernitzan \cite{boshernitzan} -- to our context.

\subsection{Layout of the paper}

Section \ref{sec-irrational} states the rationality lemma and deduces Theorem \ref{pyjama-theorem} from it, using the irrational trick.

Section \ref{sec-furstenberg} reproduces (for reference) a sketch proof of Furstenberg's \tttt{} Theorem, that will be used as a model for the proof of Lemma \ref{infinitary-rationality-lemma}. 

Section \ref{sec-infinitary} deals with the jump to the infinitary setting, and proves the relevant reductions.

Section \ref{sec-solenoid} provides some background on the limit object $\widehat{A}$ appearing in the infinitary rationality lemma, and states some standard properties of it.  We then state a number of less standard but essentially straightforward auxiliary results that we will need to run the proof of the \tttt{} Theorem; the lengthier and more technical proofs are consigned to Appendix \ref{appA}.

Finally, Section \ref{sec-rationality} reruns the proof from Section \ref{sec-furstenberg} in the new context, thereby proving the infinitary rationality lemma. 

\subsection{Notation}

We use $\Re(z)$ to denote the real part of a complex number.  In a metric space, $B_R(x)$ will denote the open ball of radius $R$ about $x$, and $\bar{B}_R(x)$ will similarly denote the closed ball.  The notation $x \ll y$ means there exists an absolute constant $C$ such that $x \le C y$, and $x \ll_{a_1, \dots, a_k} y$ means that $x \le C y$ where $C = C(a_1, \dots, a_k)$ can be a function of the variables $a_i$ only.

\section{The irrational trick}
\label{sec-irrational}

We state the rationality lemma formally.
\begin{lemma}[Rationality lemma]
  \label{rationality-lemma}
  Let $\Theta_N = \{ {\theta_5}^r {\theta_{13}}^s \,:\, 0 \le r,\, s \le N \}$ be as above, and let $\eps > 0$ be fixed.  For every $R$ there exists $D \in \Z[i]$ with $|D| \ge R$, and parameters $n \ll_\eps 1$, $N \ll_{\eps, R} 1$ such that the following holds: any point of $\C$ not contained in $\bigcup_{\theta \in \Theta_N} E_\theta$ is at distance at most $20$ from a ``very rational point'' $r \in \C$, in the sense that $r \in \frac{1}{n} D\, \Z[i] \setminus D \Z[i]$.
  
  Phrased another way, let $W = \C \setminus \bigcup_{\theta \in \Theta_N} E_\theta$ be the set of missing points; then $W$ is contained in the set
  \[
    W' = \bigcup \left\{ \bar{B}_{20}(r) \,:\, r \in \frac{1}{n} D \, \Z[i],\, r \notin D\, \Z[i] \right\} \subseteq \C \ .
  \]
\end{lemma}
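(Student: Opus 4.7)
The plan is to deduce Lemma \ref{rationality-lemma} by contradiction from its infinitary cousin, Lemma \ref{infinitary-rationality-lemma} (forward-referenced and proved later in the paper; I treat it here as a black box). The reduction is the easier of the two steps and is essentially a Furstenberg-correspondence / diagonal compactness argument, in the spirit of the analogous reformulation of the pyjama problem itself given in~\cite{mmr}.

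Suppose the lemma fails. Then for some fixed $\eps$ there is a sequence of parameter triples $(D_k, n_k, N_k)$ with $|D_k| \to \infty$, with $n_k$ bounded by the implicit constant $C(\eps)$, and with $N_k \to \infty$, together with ``bad'' points $z_k \in \C$ that are uncovered by $\bigcup_{\theta \in \Theta_{N_k}} E_\theta$ and farther than $20$ from every very rational point of $\tfrac{1}{n_k} D_k\, \Z[i] \setminus D_k\, \Z[i]$. Using the $D_k\, \Z[i]$-periodicity of each covering (Section 2.2), I would translate $z_k$ into a fundamental square of side comparable to $|D_k|$, and pass to a subsequence on which $n_k = n$ is constant.

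Next I rescale by $D_k$: the point $w_k = z_k / D_k$ lies in a bounded region of $\C$, the pyjama stripes $E_\theta / D_k$ become very thin and have centre-lines with $D_k$-rational offsets, and the very rational obstructions rescale to elements of $\tfrac{1}{n}\, \Z[i] \setminus \Z[i]$. The natural home for a subsequential limit is the solenoid $\widehat{A}$ of Section \ref{sec-solenoid}, which packages Archimedean data together with $5$-adic and $13$-adic data, and on which $\theta_5$ and $\theta_{13}$ act by a mixture of rotation and multiplication by a prime. Extracting a limit $\hat{w} \in \widehat{A}$ by a diagonal argument as $N_k, |D_k| \to \infty$, I would verify that $\hat{w}$ is uncovered by all pyjama stripes coming from the full semigroup generated by $\theta_5, \theta_{13}$ and is still bounded away from every ``rational'' point of $\widehat{A}$. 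This is precisely the situation precluded by Lemma \ref{infinitary-rationality-lemma}, yielding the desired contradiction.

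The principal obstacle is ensuring that the ``uncovered'' condition genuinely survives the limit. The finitary condition $z_k \notin E_\theta$ for all $\theta \in \Theta_{N_k}$ constrains $z_k$ in both Archimedean and $p$-adic senses (since $\theta_5$ and $\theta_{13}$ act multiplicatively by primes in the $5$-adic and $13$-adic completions), and the topology on $\widehat{A}$ must be chosen so that limits in the Archimedean direction carry the full $p$-adic information. Interleaving the growth of $N_k$ (more rotations, tighter constraints in the $p$-adic directions) with that of $|D_k|$ (finer Archimedean resolution) requires a careful diagonal choice, which I would model on the analogous construction in~\cite{mmr}; aside from this bookkeeping the reduction is formal, and the real work is deferred to Lemma \ref{infinitary-rationality-lemma}.
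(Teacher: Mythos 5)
Your overall strategy --- deduce the finitary statement from the infinitary one by a compactness/correspondence argument --- is the paper's strategy too (the paper routes through the intermediate Lemma \ref{infinitary-rationality-lemma-v1} rather than straight to Lemma \ref{infinitary-rationality-lemma}, but that difference is cosmetic). However, there are two genuine problems with your sketch. First, the rescaling $w_k = z_k/D_k$ is a wrong turn: the uncovered condition $z_k \notin E_\theta$ reads $\Re(\theta D_k w_k) \bmod 1 \notin (-\eps,\eps)$ in the rescaled variable, and since $w_k$ lives in a bounded region of $\C$ its subsequential limits stay inside $\jmath_\C(\C)$ and carry no $p$-adic information, while the $1/|D_k|$-periodic rescaled stripes become dense and the covering condition evaporates. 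The correspondence that actually works is the unrescaled one $z \mapsto \chi_z|_A$; the paper then avoids sequences and contradiction entirely by observing that $U \cup \bigcup_{\theta\in\Theta} E_\theta^\ast = \widehat{A}$ is an open cover (where $U$ is an open neighbourhood of the exceptional set supplied by the infinitary lemma), so compactness of $\widehat{A}$ hands you the finite $N$ directly.

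Second, and more importantly, the substantive content of this reduction is missing from your proposal: you must translate ``$\chi_z$ lies in a small neighbourhood of an $n$-torsion point of $\widehat{A}$'' back into ``$z$ is within $20$ of a point of $\tfrac{1}{n}D\,\Z[i]\setminus D\,\Z[i]$''. A neighbourhood in $\widehat{A}$ only controls $\chi_z$ at finitely many evaluation points $\theta_5^r\theta_{13}^s$, $0\le r,s\le m$, so one needs the elementary but non-trivial claim that if $\theta_5^r\theta_{13}^s x$ is within $\eta$ of $\Z[i]$ for all such $r,s$, then $x$ is within $\eta$ of $\bar{\PP_5}^m\bar{\PP_{13}}^m\,\Z[i]$ (proved by induction using the spacing of $\bar{\PP_5}^{-1}\Z[i]$); this is where $D = \bar{\PP_5}^m\bar{\PP_{13}}^m$ comes from and why $|D|$ can be made large. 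One also needs a separate argument --- the paper uses equidistribution of $\theta_5^r t'$ on a circle --- to rule out the nearby lattice point lying in $D\,\Z[i]$ itself, since such points are excluded from $W'$. Without these steps the reduction is not ``formal bookkeeping''; they are where the specific arithmetic of $\theta_5,\theta_{13}$ enters.
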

\begin{remark} \hspace{0pt}
  \begin{enumerate}[label=(\roman*)]
    \item We see that $W'$ is doubly periodic with period $D \, \Z[i]$, as expected.
    \item It will be important that $n = n(\eps)$ is bounded independently of $R$.  This implies that the set $W'$ becomes very sparse as $D$ becomes large.
    \item In fact, $D$ will be some large power of $\bar{\PP_5} \bar{\PP_{13}}$.
  \end{enumerate}
\end{remark}

Once we know that the set of missing points is both very sparse and very structured, it is possible to combine several copies of $\Theta_N$ in such a way as to cover the plane.  There are likely to be many ways to do this; we give one below.

\begin{proof}[Proof of Theorem \ref{pyjama-theorem} given Lemma \ref{rationality-lemma}]
  Take $n \ll_\eps 1$ as in Lemma \ref{rationality-lemma}.  We choose $|D|$ in that lemma to be sufficiently large that the ball $\bar{B}_{40 n}(0)$, and its translates by $D\, \Z[i]$, are contained in $\bigcup_{\theta \in \Theta_N} E_\theta$.  Explicitly we can take any $|D| > 40 n^2 + 20 n$, as the nearest point of $W'$ to $0$ is at least a distance $|D| / n - 20$ away.

  Write $\Theta' = \Theta'(n, N)$ as in the statement, and abbreviate $\zeta_i^{(n)}$ to $\zeta_i$.  Suppose $z \in \C$ does not lie in $\bigcup_{\theta \in \Theta'} E_{\theta}$.  Equivalently, none of $\zeta_1 z$, $\zeta_2 z$, $\zeta_3 z$ lie in $\bigcup_{\theta \in \Theta_N} E_{\theta}$.  By Lemma \ref{rationality-lemma}, we have:
    \[
      \zeta_i z \in \bar{B}_{20}(r_i)
    \]
    for $i \in \{1, 2, 3\}$, for some $r_i \in \frac{1}{n} D \Z[i]$ (but $r_i \notin D \Z[i]$).

    In particular, $n \left(r_1 + r_2\right)$ lies in $D\, \Z[i]$, so $n (\zeta_1 z + \zeta_2 z)$ lies in some ball of radius $40\, n$ around a point of $D \, \Z[i]$.  By the choice of $D$, this point lies in $\cup_{\theta \in \Theta_N} E_\theta$.  But this point is also precisely $\zeta_3 z$, a contradiction.
\end{proof}

\section{Furstenberg's \tttt{} Theorem}
\label{sec-furstenberg}

Since it will be so central to our proof of the rationality lemma, we briefly review Furstenberg's \tttt{} Theorem and its proof here.

Recall that, for a topological dynamical system $(\Phi, X)$ (i.e.~$\Phi$ is a semigroup acting continuously on a compact metric space $X$) we say $Y \subseteq X$ is invariant if $\Phi(Y) \subseteq Y$.

So, let $\Phi = \{ 2^r 3^s :\, r, s \in \Z_{\ge 0} \} \subseteq \N$ be a semigroup under multiplication, acting on $\T = \R / \Z$ by multiplication.  The theorem states:

\begin{thm}[{\cite{furstenberg}[Section IV]}]
  \label{tttt}
  The only closed, invariant subsets $Y \subseteq \T$ with respect to this action are $\T$ itself, and certain finite sets of rationals in $\T$.
\end{thm}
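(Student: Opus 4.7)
The plan is to argue by contradiction: suppose $Y \subseteq \T$ is closed and $\Phi$-invariant, but is neither all of $\T$ nor a finite set of rationals. I would first reduce to the case where $Y$ is minimal (has no proper nonempty closed $\Phi$-invariant subset), which exists by Zorn's lemma. In the finite subcase, iterating $T_2: x \mapsto 2x \bmod 1$ on any $y \in Y$ must cycle since $Y$ is finite, giving $(2^{n_2} - 2^{n_1}) y \in \Z$ for some $n_1 < n_2$, and hence $y \in \Q$; so $Y$ is then a finite set of rationals. The remaining case is $Y$ minimal and infinite, and the goal becomes showing $Y = \T$.

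Since $Y$ is infinite and compact, it admits a condensation point, so there exist distinct $y_1, y_2 \in Y$ with $\eta := y_1 - y_2$ of arbitrarily small magnitude in $\T$. For every $\phi \in \Phi$, the element $\phi\, \eta \bmod 1$ lies in the closed $\Phi$-invariant set $\overline{Y - Y}$. The critical input --- using the irrationality of $\log 2 / \log 3$ --- is that for any sufficiently small $\eta > 0$ the orbit $\{2^r 3^s \eta \bmod 1 : r, s \geq 0\}$ is dense in $\T$: one can choose $(r, s)$ so that $r + s \log_2 3$ approximates any prescribed real modulo $1$, forcing $2^r 3^s \eta$ to hit any prescribed subinterval of $\T$. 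Hence $\overline{Y - Y} = \T$.

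The final step is to promote density of $Y - Y$ to $Y = \T$. Assume $Y \neq \T$ and let $(a, a + L)$ be a longest gap of $Y$, with $L > 0$; note that $a, a + L \in Y$ since $Y$ is closed. Using minimality of $Y$ together with density of differences, I would aim to find $y_2 \in Y$ strictly less than and arbitrarily close to $a$, while simultaneously $y_1 \in Y$ has $y_1 - y_2$ close to but slightly less than $L$, which would place $y_1$ strictly inside the gap $(a, a+L)$ and contradict $y_1 \in Y$. The main obstacle is achieving these two conditions simultaneously, since a single $\phi \in \Phi$ is being used to control both where a point lands and what its dilated difference becomes. Boshernitzan's refinement of Furstenberg's argument handles this via a recurrence-plus-expansion trick that decouples the two constraints; it is this pattern that the remainder of the paper will adapt, with substantial $p$-adic technicalities, to prove the infinitary rationality lemma.
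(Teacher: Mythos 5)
The first two thirds of your outline match the paper's: the density lemma for $\eta\Phi$ via irrationality of $\log 3/\log 2$ is exactly Lemma~\ref{furstenberg-density}, and the conclusion $\overline{Y-Y}=\T$ is Corollary~\ref{furstenberg-zero-limit} applied to the difference set. But the decisive step --- passing from $Y-Y=\T$ to $Y=\T$ --- is not actually proved in your proposal: you describe a ``longest gap'' strategy, correctly identify that it requires finding $y_1,y_2\in Y$ satisfying a location constraint and a difference constraint \emph{simultaneously}, and then defer the resolution to an unnamed ``recurrence-plus-expansion trick.'' That simultaneous control is the entire content of the theorem; note that difference-set density alone is far from sufficient (the middle-thirds Cantor set $C$ satisfies $C-C=[-1,1]$). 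The paper's mechanism is quite different from a gap argument: one passes to the derived set $Y'$ (which must consist of irrationals, else Corollary~\ref{furstenberg-zero-limit} finishes), fixes a $\delta$-dense set of rationals $a_0,\dots,a_{\ell-1}$ with denominators coprime to $6$, chooses $m$ so that $\Phi^{(m)}$ fixes them all, and shows by induction that $X_k=(Y'-a_0)\cap\dots\cap(Y'-a_k)$ is nonempty --- the inductive step applies the corollary to $X_k-X_k$ to solve $y-x=a_{k+1}$ with $x\in X_k$. This produces a translate of an arbitrarily dense finite set inside $Y$, whence $Y=\T$. Your proposal contains no substitute for this induction.

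There is also a structural flaw in your reduction to minimal $Y$. An infinite closed invariant counterexample $Y$ need not contain an infinite minimal subset: its minimal subsets may all be finite sets of rationals (e.g.\ $\{0\}$), in which case proving the theorem for minimal sets tells you nothing about $Y$ itself. The paper avoids this by working with the set of limit points $Y'$, which is nonempty precisely because $Y$ is infinite, rather than with a minimal subsystem.
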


This in fact holds whenever $\Phi$ is a multiplicative subsemigroup of $\N$ that is ``non-lacunary'', in the sense that $\Phi$ is not contained in any set of the form $\{a^r :\, r \in \Z_{\ge 0} \}$ for some $a \in \N$.  However, the case when $\Phi$ is the semigroup generated by $2$ and $3$ is essentially as hard as the general case.

Both Furstenberg's and Boshernitzan's proofs make use of the following lemma and its corollaries.  This is the point in the argument where the ``non-lacunary'' nature of $\Phi$ is used.

\begin{lemma}[{Related to \cite{furstenberg}[Lemma IV.1]}]
  \label{furstenberg-density}
  Fix $\delta > 0$. For sufficiently small $\eta > 0$ we have that $\eta \Phi \cap [0, 1)$ is $\delta$-dense in $[0, 1)$.
\end{lemma}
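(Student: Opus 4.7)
The plan is to establish the multiplicative analogue of non-lacunarity: writing the elements of $\Phi$ in increasing order as $\phi_1 < \phi_2 < \cdots$, the consecutive ratios $\phi_{n+1}/\phi_n$ tend to $1$. Given this, $\delta$-density will follow from a short elementary argument.

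To prove the ratio claim, I would fix $\alpha > 1$ arbitrary and appeal to Kronecker's theorem applied to the irrational $\log 2 / \log 3$: the set $\{ r (\log 2 / \log 3) \bmod 1 : r \geq 0 \}$ is dense in $[0,1)$, so there exist positive integers $r_0, s_0$ with $2^{r_0}/3^{s_0} \in (1, \alpha)$ and positive integers $r_1, s_1$ with $3^{s_1}/2^{r_1} \in (1, \alpha)$. Then for any $\phi = 2^a 3^b \in \Phi$ with $b \geq s_0$, the element $2^{a + r_0} 3^{b - s_0}$ lies in $\Phi \cap (\phi, \alpha \phi)$; similarly if $a \geq r_1$. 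Every $\phi \in \Phi$ outside the finite set $\{ 2^a 3^b : a < r_1,\, b < s_0 \}$ satisfies at least one of these two conditions, so $\phi_{n+1}/\phi_n < \alpha$ for all $n$ past some threshold. Letting $\alpha \searrow 1$ yields $\phi_{n+1}/\phi_n \to 1$.

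To conclude, given $\delta > 0$, I would use the above to pick $N$ with $\phi_{n+1}/\phi_n \leq 1 + \delta$ for all $n \geq N$, and then take $\eta > 0$ small enough that $\eta \phi_N < \delta$. For $n \geq N$ with $\eta \phi_n < 1$, consecutive points of $\eta \Phi$ satisfy
\[
  \eta \phi_{n+1} - \eta \phi_n = \eta \phi_n \bigl( \phi_{n+1}/\phi_n - 1 \bigr) \leq \delta,
\]
while the last $\eta \phi_n$ before the sequence exceeds $1$ is at least $1/(1 + \delta) > 1 - \delta$. Together with the initial bound $\eta \phi_N < \delta$, these three observations establish $\delta$-density of $\eta \Phi \cap [0, 1)$ in $[0, 1)$.

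The main obstacle is the first step (the convergence of consecutive ratios to $1$), which is precisely where the non-lacunary nature of $\Phi$ is essential; without the irrationality of $\log 2 / \log 3$ the lemma would fail, for example for $\Phi = \{ 2^r : r \geq 0 \}$, where $\eta \Phi \cap [0, 1)$ has a gap of length $\geq 1/2$ between its largest element and $1$ regardless of $\eta$. The derivation of $\delta$-density from the ratio claim is then purely formal.
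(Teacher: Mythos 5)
Your proof is correct. The route differs from the paper's in its intermediate formulation, though both rest on exactly the same number-theoretic input, namely the irrationality of $\log 2 / \log 3$ and the density of the orbit of the corresponding circle rotation. The paper takes logarithms immediately and shows directly that $\log \Phi = \{ r \log 2 + s \log 3 \}$ is $\delta'$-dense on a long interval far to the right (by observing that $\{ r + s \log 3/\log 2 \}$ is dense on a ray), then exponentiates back. You instead first establish the multiplicative gap condition $\phi_{n+1}/\phi_n \to 1$ --- which is precisely Furstenberg's original Lemma IV.1, the result the paper's lemma is labelled as ``related to'' --- by using Kronecker's theorem to manufacture elements $2^{r_0}/3^{s_0}$ and $3^{s_1}/2^{r_1}$ just above $1$, and then derive $\delta$-density by an elementary chaining argument (small first point, small consecutive gaps, last point close to $1$). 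Your decomposition isolates the non-lacunarity statement as a clean standalone claim and makes the finite exceptional set $\{2^a 3^b : a < r_1,\ b < s_0\}$ explicit, at the cost of a slightly longer deduction at the end; the paper's version is more direct but leaves the ratio characterization implicit. Both arguments are complete and quantitatively equivalent for the purposes of the rest of the paper.
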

\begin{proof}
  It suffices to show that $\eta \Phi$ is $\delta/2$-dense on $[\delta/2, 1)$.  Taking logarithms, it is then also sufficient to show that $\log(\eta \Phi) \cap [\log(\delta / 2), 0]$ is $\delta'$-dense on $[\log(\delta / 2), 0]$, for some $\delta'$ sufficiently small in terms of $\delta$ (in fact $\delta'$ can be taken to be $\delta / 2$).

    Equivalently, this states that $\log \Phi = \{r \log 2 + s \log 3 \,:\, r, s \in \Z_{\ge 0} \}$ is $\delta'$-dense on $[\log(\delta / 2) + C, C]$ for $C = -\log \eta$ large enough.  Crucially, $\log 3 / \log 2$ is irrational, so we can choose $N$ such that $\{s \log 3 / \log 2 \pmod{1} \,:\, s \in \Z,\, 0 \le s \le N \}$ is $(\delta' / \log 2)$-dense on $\R / \Z$.  Then $\{r + s \log 3 / \log 2 \,:\, r, s \in \Z_{\ge 0}\}$ is $(\delta' / \log 2)$-dense on $[N \log 3 / \log 2, \infty)$ and so $\log \Phi$ is $\delta'$-dense on $[ N \log 3, \infty)$, which gives the result.
\end{proof}

\begin{cor}[{\cite{furstenberg}[Lemma IV.2]}]
  \label{furstenberg-zero-limit}
  Suppose $Y \subseteq \T$ is a closed invariant subset which has $0$ as a limit point.  Then $Y = \T$.  The same conclusion holds if $Y$ has just a rational limit point.
\end{cor}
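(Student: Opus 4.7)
The strategy is standard: use Lemma \ref{furstenberg-density} to push a single small element of $Y$ into a dense subset of $\T$ by applying the semigroup action, and reduce the rational-limit-point case to the case of a limit point at $0$ by translating.

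First, assume $0$ is a limit point of $Y$. Fix $\delta > 0$. By Lemma \ref{furstenberg-density}, there is some $\eta_0 > 0$ such that for every nonzero $\eta$ with $|\eta| \le \eta_0$, the set $\eta\Phi \cap [0, 1)$ is $\delta$-dense in $[0, 1)$. Since $0$ is a limit point of $Y$, we may pick $\eta \in Y$ with $0 < |\eta| \le \eta_0$; then by invariance $\eta \Phi \subseteq Y$, so $\eta \Phi \pmod 1 \subseteq Y$ (viewing $\T = \R / \Z$). Hence $Y$ is $\delta$-dense in $\T$. Since $\delta$ was arbitrary and $Y$ is closed, $Y = \T$.

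For the rational case, suppose $q \in \T$ is a rational limit point of $Y$; since $Y$ is closed, $q \in Y$. I would first reduce to the case $\gcd(\mathrm{denom}(q), 6) = 1$: writing the denominator $b$ as $2^\alpha 3^\beta b'$ with $\gcd(b', 6) = 1$ and multiplying $q$ by $2^\alpha 3^\beta \in \Phi$ produces a new rational $q' \in Y$ whose denominator divides $b'$. One needs to check $q'$ is also a limit point of $Y$; this holds because multiplication by $2^\alpha 3^\beta$ is a local homeomorphism near $q$ on the complement of the $(2^\alpha 3^\beta)^{-1}$-torsion subgroup, so distinct points in $Y$ accumulating at $q$ map to distinct points in $Y$ accumulating at $q'$. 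Replacing $q$ by $q'$, I henceforth assume $\gcd(\mathrm{denom}(q), 6) = 1$.

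Now $2$ and $3$ both have finite multiplicative order modulo $\mathrm{denom}(q)$; pick $d$ with $2^d \equiv 3^d \equiv 1 \pmod{\mathrm{denom}(q)}$, so every element of $\Phi' := \{ 2^{dr} 3^{ds} : r, s \in \Z_{\ge 0} \}$ fixes $q$ on $\T$. Consider $Y - q \subseteq \T$: it is closed, has $0$ as a limit point, and is invariant under $\Phi'$, since for $y \in Y$ and $\phi \in \Phi'$ we have $\phi(y - q) \equiv \phi y - q \pmod 1$ with $\phi y \in Y$. The semigroup $\Phi'$ is again non-lacunary because $\log 3^d / \log 2^d = \log 3 / \log 2$ is irrational, so Lemma \ref{furstenberg-density} applies to $\Phi'$ verbatim. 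Applying the already-established first case (with $\Phi$ replaced by $\Phi'$) to $Y - q$ gives $Y - q = \T$, hence $Y = \T$.

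The only subtle point is the reduction step for rational limit points — in particular verifying that limit-pointhood is preserved when multiplying $q$ by $2^\alpha 3^\beta$, and that Lemma \ref{furstenberg-density} genuinely applies to the thinned semigroup $\Phi'$. Both are immediate once one notices that non-lacunarity depends only on the irrationality of $\log 3 / \log 2$, which is invariant under passing to the subsemigroup $\Phi'$.
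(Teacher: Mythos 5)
Your proposal is correct and follows essentially the same route as the paper: the zero-limit-point case is verbatim the paper's argument via Lemma \ref{furstenberg-density}, and the rational case — which the paper dismisses with ``similar, or can be deduced from this'' — is handled by exactly the standard deduction the paper intends, namely translating by $q$ after passing to a subsemigroup $\Phi'$ fixing $q$ (the same device the paper uses with $\Phi^{(m)}$ in the main proof of Theorem \ref{tttt}). Your filled-in details (preservation of limit points under multiplication by $2^\alpha 3^\beta$, non-lacunarity of $\Phi'$) are sound.
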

\begin{proof}[Proof sketch]
  For any $\delta > 0$, we can choose a non-zero $t \in Y$ small enough that $|t| \Phi \cap [0, 1)$ is $\delta$-dense in $[0, 1)$, so \emph{a fortiori} the orbit $\Phi(t) \subseteq Y$ is $\delta$-dense in $\T$.  But $\delta$ was arbitrary and $Y$ was closed, so $Y = \T$.  The case of a rational limit point is similar, or can be deduced from this.
\end{proof}

The arguments in Furstenberg \cite{furstenberg} and Boshernitzan \cite{boshernitzan} now diverge; we broadly follow Boshernitzan.

\begin{proof}[Sketch proof of Theorem \ref{tttt}]
  Define $\Phi^{(m)} = \{ 2^{m r} 3^{m s} :\, r, s \in \Z_{\ge 0} \} \subseteq \Phi$, i.e.~the subsemigroup generated by $2^m$, $3^m$.  The above results hold also for $\Phi^{(m)}$ for any positive integer $m$ (as it is still ``non-lacunary'').

  If $Y$ is finite it is straightforward to see it consists of rationals.  Suppose $Y$ is infinite.  Let $Y'$ be the set of limit points of $Y$; it is another closed invariant subset, and non-empty as $Y$ is infinite.  If $Y'$ contains any rationals then $Y = \T$ by Corollary \ref{furstenberg-zero-limit}, so assume it doesn't.

  Fix $\delta > 0$.  Choose a finite, $\delta$-dense set of rationals in $\T$ whose denominators are coprime to $2$ and $3$; call them $\{a_0, \dots, a_{\ell - 1} \}$.  For ease of notation assume $a_0 = 0$.  Then for some $m$, $\Phi^{(m)}$ fixes all the $a_i$.

  For $0 \le k < \ell$, let $X_k = (Y' - a_0) \cap \dots \cap (Y' - a_k)$.  We claim each such $X_k$ is closed, $\Phi^{(m)}$-invariant and non-empty.  
  This will complete the proof: if we know that $X_{\ell - 1}$ is non-empty, then there exists some $x \in \T$ such that $\left\{ x + a_0, x + a_1, \dots, x + a_{\ell - 1}\right\} \subseteq Y'$.  In particular, $Y'$ is $\delta$-dense in $\T$, and hence so is $Y$; but $\delta > 0$ was arbitrary so $Y = \T$ (as it is closed).

  The proof of this claim is by induction on $k$; since $X_0 = Y'$ the case $k=0$ is clear.  Suppose $X_k$ is closed, invariant and non-empty.  We have $X_{k+1} = X_k \cap (Y' - a_{k+1})$ so this is certainly closed; and since $a_{k+1}$ fixed by $\Phi^{(m)}$, it is also $\Phi^{(m)}$-invariant.

  Since $X_k \subseteq Y'$ it contains only irrational points; so for any $x \in X_k$, the orbit $\Phi^{(m)}(x)$ is infinite and contained in $X_k$.  So $X_k$ is infinite.  Hence $(X_k - X_k)$ has $0$ as a limit point, and is closed and $\Phi^{(m)}$-invariant.  So $X_k - X_k = \T$ (by the corollary), and in particular there exist $x, y \in X_k$ such that $y - x = a_{k+1}$.  So $x = y - a_{k+1} \in Y' - a_{k+1}$ and $x \in X_k$, so $x \in X_{k+1}$ as required.
\end{proof}

\section{An infinitary reformulation of the rationality lemma}
\label{sec-infinitary}

\subsection{Introductory remarks}
We have two main motivations for considering infinitary versions of the rationality lemma.
\begin{itemize}
  \item The statement of Lemma \ref{rationality-lemma} involves four interdependent parameters ($\eps$, $D$, $n$, $N$), all of which can be successfully eliminated by passing to a suitable limit structure.  As is typical, this makes the statements and proofs significantly cleaner at the expense of sacrificing quantitative control.
  \item More importantly, passing to the limit exposes the connection with topological dynamics and Furstenberg's proof of the $\times 2,\, \times 3$ Theorem. This link is much more obscure in the finitary setting.
\end{itemize}
We are not aware of any reason in principle preventing the whole argument from being finitized to obtain effective bounds on $N$ and $n$ in terms of $\eps$.  However, the bounds obtained in this way would likely be of very poor quality (i.e.~at least of tower-type).

\subsection{Dual groups and a limit space}

In \cite{mmr}, the authors observe that the sets $E_\theta$ have a natural interpretation in terms of characters on $\C$.  

The continuous characters on $\C$ all have the form:
\begin{align*}
  \chi_{z} \,:\, \C & \rightarrow \T \\
                       w & \mapsto \Re\left(z\, w \right)\, (\bmod{1})
\end{align*}
where $z \in \C$ is some complex number.  So, simply by unravelling the definitions we can equivalently characterize the pyjama stripe $E_\theta$ by the formula:
\[
  E_\theta = \{ z \in \C \,:\, \chi_z(\theta) \in (-\eps, \eps) \} \ .
\]
The interesting feature of this definition is that it makes sense for \emph{any} character on $\C$, continuous or not.  Writing $\widehat{\C}$ for the group of all characters (not necessarily continuous) on $\C$ -- i.e.~the Bohr compactification -- we can similarly define:
\[
  E_\theta^\ast = \{ \chi \in \widehat{\C} \,:\, \chi(\theta) \in (-\eps, \eps) \} \subseteq \widehat{\C} \ .
\]
The notable properties of this situation are that $\widehat{\C}$ -- being the Pontryagin dual of $\C$ endowed with the discrete topology -- is compact with respect to its natural topology (the topology of pointwise evaluation); and moreover $E_\theta^\ast$ is an open set in this topology.  So, in \cite{mmr} the authors conclude that the pyjama problem is equivalent to the following infinitary question:
\begin{problem}[{Infinitary pyjama problem; \cite{mmr}[Lemma 4.1]}]
  \label{infinitary-pyjama-problem}
  Let $\eps > 0$.  Is there a set $\Phi$ of unit complex numbers -- which need not be finite and without loss of generality may be taken to be the whole unit circle -- such that $\bigcup_{\phi \in \Phi} E_\phi^\ast$ is all of $\widehat{\C}$?
\end{problem}
Deducing the pyjama problem from this is straightforward.  If such a set $\Phi$ exists, then by compactness there is actually a finite set $\Phi = \{\phi_1, \dots, \phi_k\}$ with the same property, i.e.:
\[
  \bigcup_i E_{\phi_i}^\ast = \widehat{\C} \ .
\]
Now, intersecting both sides of this equation with the set $\{ \chi_z \,:\, z \in \C \}$ of continuous characters gives
\[
  \bigcup_i E_{\phi_i} = \C
\]
as required.  The converse direction is less straightforward, and essentially follows from the fact that the continuous characters are dense in $\widehat{\C}$.

We refer to the space $\widehat{\C}$ as the \emph{limit space} being used in the arguments of \cite{mmr}.

\subsection{The smaller limit space $\widehat{A}$}

One downside of this approach, in our opinion, is that this limit space $\widehat{\C}$ is very large (for instance, non-metrizable) and generally difficult to work with.

However, we observe that we only ever need to know the value of a given $\chi \in \widehat{\C}$ \emph{at points of $\Phi$}.  So, the infinitary pyjama problem is well-defined on the quotient of $\widehat{\C}$ obtained by restricting to $\Phi$.  If $\Phi$ is very large, this doesn't help us very much; but if $\Phi$ is chosen very conservatively, this quotient might be comparatively manageable.

Specifically, suppose we work with the set of rotations
\[
  \Theta  = \Theta_\infty = \{ {\theta_5}^r {\theta_{13}}^s \,:\, r,\, s \in \Z_{\ge 0} \} = \bigcup_{N \ge 1} \Theta_N \ .
\]
We write $A$ for the subring of $\Q(i)$ generated by $\Z[i]$, $1/\bar{\PP_5}$ and $1/\bar{\PP_{13}}$, denoted $A = \Z[i][1 / \bar{\PP_5}, \, 1 / \bar{\PP_{13}}]$.  To put it another way, $A$ consists of those elements of $\Q(i)$ whose denominators are of the form $\bar{\PP_5}^r \bar{\PP_{13}}^s$.

In particular $\Theta \subseteq A$ and so it suffices to consider the quotient of $\widehat{\C}$ obtained by restricting the characters to $A$.  Equivalently, that space is precisely the Pontryagin dual $\widehat{A}$, where $A$ is given the discrete topology.

By contrast to $\widehat{\C}$, the space $\widehat{A}$ (again with the topology of pointwise evaluation) is reasonably tame.\footnote{It is perhaps worth noting that the heuristic statements ``we work with rational rotations so that the state space (i.e.~fundamental domain in $\C$) is finite'' and ``we work with rotations in $A$ so that the state space (i.e.~$\widehat{A}$) is reasonably tame'', are very closely related.} In general terms, it is metrizable and second-countable; but more importantly its structure can be understood entirely concretely as a certain quotient of a finite product of $\C$ and some $p$-adic fields.  We will return to this viewpoint in Section \ref{sec-solenoid}.

Of course, since we know that rotations in $\Q(i)$ cannot cover $\C$, the analogue of Problem \ref{infinitary-pyjama-problem} for $\widehat{A}$ is false.  Our aim is instead to transfer the rationality lemma (Lemma \ref{rationality-lemma}) to a statement on $\widehat{A}$.  A first attempt is given below.

Note that continuous characters on $\C$ are certainly characters of $A$; that is, there is a map
\begin{align*}
  \jmath_\C \,:\, \C & \rightarrow \widehat{A} \\
                   z & \mapsto {\chi_z} |_A \ .
\end{align*}
We write $\bar{B}^\C_R(0) \subseteq \widehat{A}$ for the image $\jmath_\C(\bar{B}_R(0))$ of the closed ball of radius $R$ about $0$ in $\C$, under $\jmath_\C$.  For $x \in \widehat{A}$, similarly define $\bar{B}^\C_R(x) = x + \bar{B}^\C_R(0)$.

\begin{lemma}[Infinitary rationality lemma, first formulation]
  \label{infinitary-rationality-lemma-v1}
  Pick $\eps > 0$, and let $\Theta$, $A$ be defined as above.  Treat
  \[
    E_\theta^\ast = \{ x \in \widehat{A} \,:\, x(\theta) \in (-\eps, \eps) \}
  \]
  now as a subset of $\widehat{A}$.  Define $W^\ast = \widehat{A} \setminus \bigcup_{\theta \in \Theta} E_\theta^\ast$ (the missing points).  Then there exists $n \ll_\eps 1$ such that $W^\ast$ is contained in the set:
  \[
    {W'}^\ast = \left\{ w \in \widehat{A} \,:\, n\, w \in \bar{B}_{10 n}^\C(0)  \right\}
  \]
  In other words, every point $w$ of $W^\ast$ is ``close to'' an $n$-torsion point of $\widehat{A}$ in the sense that they differ by the image under $\jmath_\C$ of a small complex number.
\end{lemma}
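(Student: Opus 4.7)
The plan is to treat $W^\ast$ as a closed, $\Theta$-invariant subset of the compact abelian group $\widehat{A}$ and adapt the proof of Furstenberg's Theorem \ref{tttt} (in Boshernitzan's version sketched in Section \ref{sec-furstenberg}) to this setting. Closedness is immediate, as $W^\ast$ is the complement of a union of open sets $E_\theta^\ast$, and $\Theta$-invariance under the natural dual action $(\theta_0 \cdot x)(a) = x(\theta_0 a)$ follows because $\Theta$ is a multiplicative semigroup: if $x \in W^\ast$ and $\theta_0 \in \Theta$, then for every $\theta \in \Theta$ we have $(\theta_0 \cdot x)(\theta) = x(\theta_0 \theta) \notin (-\eps, \eps)$, since $\theta_0 \theta \in \Theta$. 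Also, $W^\ast$ is a proper subset of $\widehat{A}$, since the zero character lies in $E_\theta^\ast$ for every $\theta$.

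Porting the Boshernitzan argument from $\T$ to $\widehat{A}$ requires two main inputs. First, an analog of Lemma \ref{furstenberg-density}: for any nonzero $y \in \widehat{A}$ that is sufficiently small in an appropriate sense, the orbit $\Theta y$ should be $\delta$-dense in $\widehat{A}$. In the Archimedean component of $\widehat{A}$ (the image of $\jmath_\C$) this essentially reduces to Lemma \ref{furstenberg-density} itself, since $\theta_5$ and $\theta_{13}$ are non-lacunary as unit complex numbers. In the $\bar{\PP_5}$- and $\bar{\PP_{13}}$-adic components it should follow from the fact that multiplication by $\theta_5 = \PP_5 / \bar{\PP_5}$ has nontrivial valuation at $\bar{\PP_5}$ while fixing the $\bar{\PP_{13}}$-adic factor, and symmetrically for $\theta_{13}$; this is the point where $p$-adic analysis enters. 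Second, I would derive the corresponding analog of Corollary \ref{furstenberg-zero-limit}: a closed $\Theta$-invariant subset of $\widehat{A}$ having a limit point that is not close to bounded-order torsion must in fact equal $\widehat{A}$.

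Given these, Boshernitzan's induction adapts as follows. Suppose for contradiction that no uniform $n = n(\eps)$ works, so $W^\ast$ contains points arbitrarily far from bounded-order torsion. Choose a $\delta = \delta(\eps)$-dense set of torsion characters $a_0, \dots, a_{\ell - 1} \in \widehat{A}$ whose orders are coprime to $5$ and $13$, so that some sub-semigroup $\Theta^{(m)}$ fixes them. Inductively set $X_k = (W^\ast - a_0) \cap \cdots \cap (W^\ast - a_k)$; each $X_k$ is closed and $\Theta^{(m)}$-invariant. Assuming $X_k$ is non-empty and not trapped in a bounded-torsion neighborhood, its difference set $X_k - X_k$ has a small non-torsion-like limit point, and the corollary forces $X_k - X_k = \widehat{A}$, producing an element of $X_{k+1}$. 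Reaching $k = \ell - 1$ exhibits an $x \in \widehat{A}$ with $x + a_i \in W^\ast$ for all $i$, so $W^\ast$ is $\delta$-dense in $\widehat{A}$, contradicting properness for $\delta$ small enough in terms of $\eps$.

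The main obstacle is the density lemma itself in the mixed Archimedean/$p$-adic setting: one must jointly control $\Theta$-orbits across the $\C$, $\bar{\PP_5}$-adic, and $\bar{\PP_{13}}$-adic factors of $\widehat{A}$, and this is where the bulk of the technical work — including the genuine $p$-adic analysis alluded to in the introduction — will lie. The uniformity $n \ll_\eps 1$, however, falls out naturally from the infinitary formulation, since the classification of closed $\Theta$-invariant subsets of $\widehat{A}$ depends only on $\eps$ and not on any finite truncation $\Theta_N$.
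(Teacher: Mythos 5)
Your high-level strategy is the same as the paper's: the paper proves the stronger dynamical statement (Lemma \ref{infinitary-rationality-lemma}) by exactly this Furstenberg/Boshernitzan adaptation and then deduces the present lemma from it in three lines, so proving it directly for $Y = W^\ast$ is a reasonable plan. However, your sketch has genuine gaps at precisely the two points where the passage from $\T$ to $\widehat{A}$ is not routine. First, your proposed analogue of Corollary \ref{furstenberg-zero-limit} --- ``a closed invariant set with a limit point not close to bounded-order torsion equals $\widehat{A}$'' --- is false as stated: the orbit closure of $\jmath_\C(w) + t$, with $|w|$ small and $t$ torsion of enormous order, is closed, invariant, proper, and all of its points are limit points lying far from any bounded-order torsion point. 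The correct hypothesis is that $0$ is a \emph{non-Archimedean} limit point, i.e.\ approached by a sequence staying outside a small complex ball $B^\C_{1/10}$ around the limit; and even then, when the limit point has the form $\jmath_\C(w)+t$ with $t$ torsion, the argument only yields $Y(1)=\T$ rather than $Y=\widehat{A}$ (the paper explicitly cannot prove the stronger dichotomy), so the two cases must be kept separate.

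Second, you run the intersection induction on $W^\ast$ itself, whereas the argument needs to run on the derived set. To apply the zero-limit lemma to $X_k - X_k$ you must know that $0$ is a non-Archimedean limit point of it, which requires $X_k$ not to be contained in a finite union of complex balls; your phrase ``assuming $X_k$ is \dots not trapped in a bounded-torsion neighborhood'' assumes exactly what has to be proved. The paper secures this by defining $X_k \subseteq Y'$, where $Y'$ is the set of non-Archimedean limit points, and by first showing (via the separate torsion-limit lemma, which rests on the ``big circles'' observation that a circle of radius $\ge 1/2$ in the $\C$-fibre evaluates onto all of $\T$ at $1$) that $Y'$ contains no point of the form $\jmath_\C(w)+t$ unless $Y(1)=\T$ already. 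Without these two repairs the induction does not close. Your description of the density input is directionally right but likewise understates the work: one needs density in $\widehat{A}$ of a single coset of a finite-index multiplicative subgroup of $\Q_5^\times$ (Proposition \ref{strong-strong-approx}), not merely that $\theta_5$ has nontrivial $\bar{\PP_5}$-adic valuation.
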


Note that, in this first pass reformulation, the parameters $N$ and $D$ have been eliminated but $\eps$ and $n$ have not.

The deduction of Lemma \ref{rationality-lemma} from this is somewhat less straightforward than in the case of Problem \ref{infinitary-pyjama-problem}, but is nonetheless essentially formal.

\begin{proof}[Deduction of Lemma \ref{rationality-lemma} from Lemma \ref{infinitary-rationality-lemma-v1}]
  Let $m \in \N$ and $\delta > 0$ be parameters to be specified later.  Then the set
  \[
    U_0 = \left \{ x \in \widehat{A} \,:\, x(i^\ell {\theta_5}^r {\theta_{13}}^s) \in (-\delta, \delta) \ \forall \ 0 \le r, s \le m,\, \ell \in \{0, 1\} \right \}
  \]
  is a standard open neighbourhood of $0$ in $\widehat{A}$, as the latter has the topology of pointwise evaluation.

  Define
  \[
    U = \bigcup_{w \in {W'}^\ast} (w + U_0) \subseteq \widehat{A}
  \]
  which is therefore an open neighbourhood of $W'^\ast$.  Assuming Lemma \ref{infinitary-rationality-lemma-v1}, we have that
  \[
    U \cup \bigcup_{\theta \in \Theta} E_\theta^\ast = \widehat{A} \ .
  \]
  By compactness, and since all these sets are open, there exists $N$ such that
  \[
    U \cup \bigcup_{\theta \in \Theta_N} E_\theta^\ast = \widehat{A} \ .
  \]
  Taking the preimage of both sides under $\jmath_\C$, we obtain
  \[
    \jmath_\C^{-1}(U) \cup \bigcup_{\theta \in \Theta_N} E_\theta = \C
  \]
  and rearranging this we get
  \[
    \C \setminus \bigcup_{\theta \in \Theta_N} E_\theta \subseteq \jmath_\C^{-1}(U) \ .
  \]
  Hence it suffices to verify that this mysterious set $\jmath_\C^{-1}(U)$ is contained in the set $W'$ from the statement of Lemma \ref{rationality-lemma}, for appropriate values of the parameters.  This is essentially just a case of unravelling the definitions.

  Suppose $z \in \jmath_\C^{-1}(U)$.  That means there exists $w \in {W'}^\ast$ and $u \in U_0$ such that $\chi_z = w + u$.  So, $n\, \chi_z = n w + n u$; but we know that $n w = \chi_{n t}$ for some $t \in \C$ with $|t| \le 10$.  So, $n u = \chi_{n z - n t}$.

  By the definition of $U_0$, we have $u(i^\ell {\theta_5}^r {\theta_{13}}^s) \in (-\delta, \delta)$ for all $0 \le r, s \le m$ and $\ell \in \{0, 1\}$.  So, $n u(i^\ell {\theta_5}^r {\theta_{13}}^s) \in (-n \delta, n \delta)$ for all such $r, s, \ell$ and hence
  \[
    \Re \left( n (z - t)\, i^\ell {\theta_5}^r {\theta_{13}}^s \right) \ (\bmod \, 1) \in (-n \delta, n \delta)
  \]
  for all such $r, s, \ell$, which implies that $n (z - t)\, {\theta_5}^r {\theta_{13}}^s$ is within distance $2 n \delta$ of a point of $\Z[i]$ for all $0 \le r, s \le m$.

  We isolate the following elementary claim.
  \begin{claim*}
    Write $D = \bar{\PP_5}^a \bar{\PP_{13}}^b$ for some $a, b \ge 0$. Fix some $x \in \C$, $\eta \le 1/100$ and suppose for all integers $r, s$ with $0 \le r \le a$, $0 \le s \le b$ we have that ${\theta_5}^r {\theta_{13}}^s x$ is within distance $\eta$ of a point of $\Z[i]$.  Then $x$ is within distance $\eta$ of a point of $D \Z[i]$.
  \end{claim*}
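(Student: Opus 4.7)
The plan is to rely entirely on elementary Gaussian integer arithmetic, exploiting the crucial fact that $\PP_5$ and $\bar{\PP_5}$ are coprime in $\Z[i]$ (being the two distinct prime factors of $5$), and similarly for $\PP_{13}$ and $\bar{\PP_{13}}$. Applying the hypothesis at $(r,s)=(0,0)$ yields some $m \in \Z[i]$ with $|x - m| \le \eta$. The goal reduces to showing $D \mid m$, for then $m \in D\Z[i]$ and the desired conclusion follows immediately.

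The main step is to prove by induction on $k$ that $\bar{\PP_5}^k \mid m$ for all $0 \le k \le a$. Assuming $m = \bar{\PP_5}^k m'$ with $m' \in \Z[i]$, I apply the hypothesis at $(r,s) = (k+1,0)$ to obtain $n_{k+1} \in \Z[i]$ with $|\theta_5^{k+1} x - n_{k+1}| \le \eta$, and since $|\theta_5|=1$ the triangle inequality upgrades this to $|\theta_5^{k+1} m - n_{k+1}| \le 2\eta$. Writing $\theta_5^{k+1} m = \PP_5^{k+1} m' / \bar{\PP_5}$ and clearing the denominator, one gets $|\PP_5^{k+1} m' - \bar{\PP_5}\, n_{k+1}| \le 2\eta\, \sqrt{5} < 1$. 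The left-hand side is a Gaussian integer, hence must vanish; coprimality of $\PP_5$ and $\bar{\PP_5}$ then forces $\bar{\PP_5} \mid m'$, completing the induction.

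An entirely symmetric argument, applying the hypothesis at $(r,s) = (0, k+1)$ for $0 \le k \le b-1$ and using $2\eta\,\sqrt{13} < 1$, yields $\bar{\PP_{13}}^b \mid m$. Since $\bar{\PP_5}$ and $\bar{\PP_{13}}$ lie over distinct rational primes and are therefore coprime in $\Z[i]$, we conclude that $D = \bar{\PP_5}^a \bar{\PP_{13}}^b$ divides $m$, as required. I do not expect any genuine obstacle here: the bound $\eta \le 1/100$ is comfortably stronger than needed to force integrality, and the only nontrivial step is the standard passage from ``small complex number which happens to be a Gaussian integer'' to ``equal to zero''. The argument never couples the two primes, which is what makes the one-dimensional induction in each variable suffice.
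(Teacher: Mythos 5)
Your proof is correct and follows essentially the same route as the paper's: an induction that strips off one factor of $\bar{\PP_5}$ (resp.\ $\bar{\PP_{13}}$) at a time, resting on the facts that a Gaussian integer of modulus less than $1$ must vanish and that $\PP_5$ and $\bar{\PP_5}$ are coprime. The only cosmetic difference is that you fix the single approximant $m$ of $x$ once and invoke the hypothesis only along the axes $(r,0)$ and $(0,s)$, whereas the paper runs a two-parameter induction comparing the lattice approximants of $x$ and of $\theta_5\, x$; both arguments are sound.
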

  \begin{proof}[Proof of claim]
    We proceed by induction on $a$ and $b$.  The base case $a = 0$, $b = 0$ is by assumption. If $a > 0$, we deduce by inductive hypothesis that $x$ and $\theta_5\, x$ both lie within $\eta$ of points of $\bar{\PP_5}^{a-1} \bar{\PP_{13}}^b \Z[i]$; call these points $y_1$, $y_2$ respectively.  Now, $|\theta_5 y_1 - y_2| \le 1/50$ and both $\theta_5 y_1$ and $y_2$ lie in $\bar{\PP_5}^{a-2} \bar{\PP_{13}}^b \Z[i]$, so $y_2 = \theta_5 y_1$ (as distinct points of $\bar{\PP_5}^{-1} \Z[i]$ are a distance at least $1/\sqrt{5}$ apart).  Hence $y_1 = {\theta_5}^{-1} y_2 \in \bar{\PP_5}^a \bar{\PP_{13}}^b \Z[i]$ as required.

    The induction step on $b$ is analogous.
  \end{proof}

  Applying the claim to $n (z - t)$ and setting $\delta = 1/(200 n)$, we deduce that $n (z - t)$ is within $1/100$ of a point of $D \Z[i]$ where $D = \bar{\PP_5}^m \bar{\PP_{13}}^m$; so $z$ is certainly within $20$ of a point $k \in \frac{1}{n} D \Z[i]$ (recalling $|t| \le 10$).  This means $z$ lies in $W'$ -- \emph{unless} the point $k$ lies in $D \Z[i]$ itself, which was specifically prohibited in the definition of $W'$.

  We check directly that this last case doesn't happen.  Indeed, suppose $z = k + t'$ for $k \in D\, \Z[i]$ and $|t'| \le 20$.  Since ${\theta_5}^r \ne 1$ for all $r \ge 0$, the map $\tau \mapsto \theta_5 \tau$ is an aperiodic rotation on the unit circle, and so by standard arguments we have that $\{ {\theta_5}^r\, t' \,:\, 0 \le r \le M \}$ is $\eps/2$-dense on the circle $\{z \in \C \,:\, |z| = |t'|\}$ for some $M \ll_\eps 1$. In particular there is some $0 \le r \le M$ such that ${\theta_5}^r\, t' \in E$.  Assuming $m \ge M$ (which we can), we get ${\theta_5}^r\, k \in \Z[i]$ and hence ${\theta_5}^r\, z \in E$.

  Noting that $m$ was allowed to be arbitrarily large, we get $|D|$ arbitrarily large (independently of $n$ and $\eps$), thereby completing the proof.
\end{proof}

\subsection{A dynamical reformulation}

Although Lemma \ref{infinitary-rationality-lemma-v1} puts the rationality lemma in a compact setting, the link with topological dynamics and the \tttt{} Theorem is not yet clear.  Our second (and final!) reformulation requires a simple but important shift in perspective.

Rather than rotating the pyjama stripe $E_1^\ast$ to $E_\theta^\ast$ and asking whether the rotated copies cover most of $\widehat{A}$, we keep the stripe fixed and rotate the \emph{points of $\widehat{A}$ itself}.  We then ask which points have rotations landing in the standard stripe $E_1^\ast$.

Formally, we define an action of $\Theta$ on $\widehat{A}$ by
\begin{align*}
  \rho \,:\, \Theta \times \widehat{A} &\rightarrow \widehat{A} \\
             (\theta, \chi) & \mapsto \left(a \mapsto \chi(\theta a) \right) \ .
\end{align*}
(We normally omit the $\rho$ and write $\theta(\chi)$ to refer to this action.)

This is a continuous, multiplicative semigroup action; i.e.~$(\Theta, \widehat{A})$ can be viewed as a topological dynamical system.  We can therefore use the terminology of invariant subsets etc.~in this context.

Note that
\begin{align*}
  E_\theta^\ast & = \left\{ \chi \in \widehat{A} \,:\, \theta(\chi) \in E_1^\ast \right\} \\ 
                & = \rho(\theta)^{-1} (E_1^\ast)
\end{align*}
where this once we use $\rho(\theta)^{-1}$ explicitly to denote the preimage, thereby avoiding confusion of $\theta^{-1}$ and $1/\theta$.  (Since $1/\theta \notin A$, its action on $\widehat{A}$ is not well-defined.)

We are now in a position to state the full infinitary rationality lemma.  Recall that $\chi \in \widehat{A}$ is a map $A \rightarrow \T$, so for $Y \subseteq \widehat{A}$ and $r \in A$, $Y(r) = \{\chi(r) \,:\, \chi \in Y\}$ denotes a subset of $\T$, the image of $Y$ under the evaluation map at $r$.
\begin{lemma}[Infinitary rationality lemma]
  \label{infinitary-rationality-lemma}
  As above, treat $(\Theta,\, \widehat{A})$ as a topological dynamical system.  If $Y \subseteq \widehat{A}$ is a closed, $\Theta$-invariant set such that the image of the evaluation map $Y(1) = \{\chi(1) \,:\, \chi \in Y\}$ is not all of $\T$, then $Y$ is contained in a set of the form
  \[
    U = \bigcup_{i=1}^k \bar{B}_{1/2}^\C(u_i)
  \]
  where $u_1, \dots, u_k$ is a finite collection of torsion points of $\widehat{A}$.
\end{lemma}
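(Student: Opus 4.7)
The plan is to adapt Boshernitzan's proof of Theorem \ref{tttt} sketched in Section \ref{sec-furstenberg}, with the dichotomy ``$Y = \T$ versus $Y$ is a finite set of rationals'' replaced by ``$Y(1) = \T$ versus $Y$ is contained in a bounded $\C$-neighbourhood of finitely many torsion points of $\widehat{A}$''. I would argue by contradiction, supposing $Y$ is closed, $\Theta$-invariant, $Y(1) \neq \T$, and yet $Y$ is not contained in any neighbourhood $U$ of a finite set of torsion points, and deriving $Y(1) = \T$.

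The first and key step is an analog of Corollary \ref{furstenberg-zero-limit}: if $Z \subseteq \widehat{A}$ is closed, $\Theta^{(m)}$-invariant (where $\Theta^{(m)}$ denotes the sub-semigroup generated by $\theta_5^m$ and $\theta_{13}^m$), and has a torsion point of $\widehat{A}$ as a limit point, then $Z(1) = \T$. After translating by that torsion point (using that $\Theta^{(m)}$ fixes it for appropriate $m$), we may assume $Z$ contains arbitrarily small non-zero $\chi \in \widehat{A}$. Using the structure theorem for $\widehat{A}$ from Section \ref{sec-solenoid} (which presents $\widehat{A}$, up to a discrete quotient, as a product of $\C$ with certain $p$-adic factors), the plan is to extract from some such $\chi$ a non-negligible continuous component $\jmath_\C(z)$; the $\Theta^{(m)}$-orbit then becomes $\{\jmath_\C(\theta_5^{mr} \theta_{13}^{ms} z) : r, s \geq 0\}$, and since unique factorization in $\Z[i]$ forces $\arg \theta_5$ and $\arg \theta_{13}$ to be $\Q$-linearly independent modulo $2\pi \Z$, an argument directly analogous to Lemma \ref{furstenberg-density} shows that this orbit is dense on the circle $\{w \in \C : |w| = |z|\}$. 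Projecting via evaluation at $1$ gives density on an interval of $\T$; combining with further orbit elements (or iterating the density lemma) then fills out $\T$.

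With this in hand, the remainder is essentially a transcription of Boshernitzan. Let $Y'$ be the set of accumulation points of $Y$; it is closed and $\Theta$-invariant. If $Y'$ contains any torsion point, the previous step gives $Y'(1) = \T \subseteq Y(1)$, a contradiction; so $Y'$ consists of non-torsion points only, and their $\Theta^{(m)}$-orbits are infinite. Choose a $\delta$-dense set of torsion points $a_0 = 0, a_1, \ldots, a_{\ell - 1}$ of $\widehat{A}$ of orders coprime to $5$ and $13$, so that all of them are fixed by some common $\Theta^{(m)}$ (density and abundance of such torsion in $\widehat{A}$ being a standard feature of the solenoid structure). Set $X_k = (Y' - a_0) \cap \cdots \cap (Y' - a_k)$, which is closed and $\Theta^{(m)}$-invariant. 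By induction on $k$, each $X_k$ is non-empty: given $X_k$ infinite (inherited from the infinite orbits in $Y'$), the set $X_k - X_k$ has $0$ as a limit point, and the first step applied to $X_k - X_k$, together with a fibrewise refinement, yields $x, y \in X_k$ with $y - x = a_{k+1}$ exactly, so $x \in X_{k+1}$. Once $X_{\ell - 1} \neq \emptyset$ we get $x \in \widehat{A}$ with $\{x + a_i\} \subseteq Y'$; evaluating at $1$ shows $Y'(1)$ is $\delta'$-dense in $\T$ with $\delta' \to 0$ as $\delta \to 0$, so $Y(1) = \T$ as it is closed and $\delta$ was arbitrary, giving the desired contradiction.

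The main obstacle I would anticipate is the initial zero-limit-point step. In Furstenberg's setting ``$0$ is a limit point of $Y \subseteq \T$'' immediately supplies a small real number, whereas ``$0$ is a limit point in $\widehat{A}$'' only supplies a character small in the pointwise-evaluation topology, which may well have tiny continuous component and large $p$-adic components; extracting a usable continuous component is precisely what forces the $p$-adic machinery of Section \ref{sec-solenoid} and Appendix \ref{appA} into play. A subsidiary technicality is upgrading the conclusion $(X_k - X_k)(1) = \T$ to the genuine group-theoretic statement $a_{k+1} \in X_k - X_k$ for torsion $a_{k+1}$; this likely requires a fibrewise strengthening of the first step, but appears surmountable once the latter is in place. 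The rest of the proof is a faithful transcription of Section \ref{sec-furstenberg}.
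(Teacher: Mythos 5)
Your overall skeleton (the Boshernitzan transcription with the nested sets $X_k$ and a dense set of $\Theta^{(m)}$-fixed points) matches the paper's Section \ref{sec-rationality} closely. But your key step --- the analogue of Corollary \ref{furstenberg-zero-limit} --- is both misstated and attacked by a mechanism that cannot work, and this is precisely where the paper says all the difficulty lives.

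First, the statement ``if $Z$ is closed, $\Theta^{(m)}$-invariant and has a torsion point as a limit point, then $Z(1)=\T$'' is false: $Z=\bar{B}^\C_{1/10}(0)$ is closed and invariant, has $0$ as a limit point, and $Z(1)=[-1/10,1/10]\ne\T$. The paper must therefore introduce the notion of a \emph{non-Archimedean} limit point (one not approached along the complex fibre) and restate the lemma with that hypothesis; your proposal omits this entirely. Second, your proof mechanism is inverted. You propose to extract from a small $\chi$ near $0$ a ``non-negligible continuous component'' $\jmath_\C(z)$ and use equidistribution of $\{\theta_5^{mr}\theta_{13}^{ms}z\}$ on the circle $|w|=|z|$. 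But the $\Theta$-action is an \emph{isometry} in the complex direction, so the orbit of the complex part stays on a circle of radius $|z|$, which is tiny; its evaluation at $1$ covers only a tiny interval of $\T$, never all of it. The usable direction is the opposite one: the action of $\theta_5$ on the $\Q_5$-component is \emph{expanding} ($|\theta_5|_{\bar{\PP_5}}>1$), so a small but non-zero $5$-adic (or $13$-adic) component --- guaranteed exactly by the non-Archimedean hypothesis --- can be blown up, and the resulting orbit is shown to be dense in a coset of a finite-index subgroup of $\Q_5^\times$, whose image is dense in all of $\widehat{A}$ by strong approximation. This is the $p$-adic machinery of Section \ref{sec-solenoid}, and no amount of work on the complex component substitutes for it.

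Third, what you call a ``subsidiary technicality'' is fatal to your version of the induction: to pass from $X_k$ to $X_{k+1}$ you need points $x,y$ with $y-x=a_{k+1}$ \emph{exactly}, i.e.\ you need the full conclusion $Y'-X_k=\widehat{A}$, not merely $(X_k-X_k)(1)=\T$. The paper's Lemma \ref{zero-limit-lemma} delivers the strong conclusion $Y=\widehat{A}$ for precisely this reason, and the paper explicitly remarks that the weaker evaluation-image conclusion would not suffice there (it is only tolerable in the torsion-limit-point case, Lemma \ref{torsion-limit-lemma}, which is used once, outside the induction). As written, your argument establishes too little at the step where it matters most.
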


\begin{remark}
  \hspace{0pt}
  \begin{enumerate}[label=(\roman*)]
    \item The approximate dictionary with the \tttt{} Theorem is then:
      \begin{align*}
        \widehat{A} & \longleftrightarrow \T \\
        \left\{{\theta_5}^r {\theta_{13}}^s \,:\, r, s \in \Z_{\ge 0} \right\}  = \Theta & \longleftrightarrow \Phi = \left\{2^r 3^s \,:\, r, s \in \Z_{\ge 0} \right\} \\
                                                     Y(1) = \T & \longleftrightarrow Y = \T \\
                             Y \text{ is contained in some } U & \longleftrightarrow Y \text{ is a finite set of rationals}
      \end{align*}
      We note that the latter two are weaker than the ``obvious'' analogues, namely ``$Y = \widehat{A}$'' and ``$Y$ is a finite set of torsion points''.  We know of no reason why the result could not be strengthened to give $Y = \widehat{A}$ as one part of the dichotomy; but we do not need this strengthening, and our proof does not provide it.  By contrast, the form of $U$ \emph{cannot} be strengthened to ``$U$ is a finite set of torsion points'', as can be seen by taking, say, $Y = \bar{B}_{1/10}^\C(0) \subseteq \widehat{A}$.
    \item For all this to make sense, it was crucial that $\Theta$ was a multiplicative semigroup.  This forms part of the motivation for the choice of $\Theta$ and by extension of the $\Theta_N$.
    \item While the constants $10$ and $20$ appearing in $\bar{B}_{10n}^\C$ and $\bar{B}_{20}^\C$ in Lemmas \ref{infinitary-rationality-lemma-v1} and \ref{rationality-lemma} respectively were chosen fairly arbitrarily, the constant $1/2$ appearing here is best possible.  However, the value of this constant is never important, so the reader can mentally substitute a worse value if they wish.
    \item Notice that all the parameters (i.e.~$n$, $\eps$ etc.) appearing in Lemma \ref{rationality-lemma} have now been eliminated.
  \end{enumerate}
\end{remark}

Having seen how Lemma \ref{infinitary-rationality-lemma} relates to the \tttt{} Theorem, we now see how it relates to Lemma \ref{infinitary-rationality-lemma-v1} by deducing the latter from it.  This is very straightforward.

\begin{proof}[Deduction of Lemma \ref{infinitary-rationality-lemma-v1} from Lemma \ref{infinitary-rationality-lemma}]
  Consider the closed, invariant subset of $\widehat{A}$ given by
  \begin{align*}
    W^\ast &= \bigcap_{\theta \in \Theta} \rho(\theta)^{-1} \left(\widehat{A} \setminus E_1^\ast \right) \\
           &= \widehat{A} \setminus \bigcup_{\theta \in \Theta} \rho(\theta)^{-1} E_1^\ast \\
      &= \widehat{A} \setminus \bigcup_{\theta \in \Theta} E_\theta^\ast
  \end{align*}
  (closed because $E_1^\ast$ is open).  Then $W^\ast(1) \ne \T$ since $W^\ast \cap E_1^\ast = \emptyset$, i.e.~$W^\ast(1) \subseteq [\eps, 1 - \eps]$.  Hence $W^\ast \subseteq U$ for some $U$ of the specified form, and taking $n$ such that $n\, u_i = 0 \ \forall i$, this implies the conclusion of Lemma \ref{infinitary-rationality-lemma-v1}.
\end{proof}

To recap: in order to prove Theorem \ref{pyjama-theorem} (the pyjama problem) it now suffices to prove Lemma \ref{infinitary-rationality-lemma} using the analogy with the proof of Theorem \ref{tttt}.

\begin{remark}
  Dynamical systems of the form $(\Theta, \widehat{A})$ are by no means new. Such objects are referred to as ``$S$-integer dynamical systems'' in \cite{cew}, and similar spaces are studied in \cite{berend}, as mentioned in the introduction.
\end{remark}

\subsection{Remarks on the choice of $\Theta$}
The motivation for the choice of the rotations $\Theta$, $\Theta_N$ and ultimately $\Theta'(n, N)$ (see Theorem \ref{pyjama-theorem}) should now be complete, and we offer a few remarks.

We noted above that $\Theta$ needs to be a semigroup.  All we really needed about the sets $\Theta_N$ was that their union was $\Theta$; but taking $\Theta_N$ to be a long multidimensional progression on the generators is the most natural choice.  Once we have chosen $\Theta_N$, the choice of $\Theta'(n, N)$ is fixed by the use of the irrational trick (see Section \ref{sec-irrational}).

We could in principle have used a larger semigroup in place of $\Theta$, such as the set of all unit norm elements of $\Q(i)$, which clearly contains $\Theta$.  Although this cannot hurt in some sense -- adding more rotations doesn't make the problem harder -- we would be forced to pass to a larger limit structure, e.g.~$\widehat{\Q(i)}$.  This is problematic for two reasons.
\begin{enumerate}[label=(\roman*)]
  \item Working with this larger, more complicated space introduces yet more technical hurdles than already exist in Section \ref{sec-solenoid}.
  \item Trying to get any benefit out of the extra rotations brings into play non-trivial questions about the distribution of rational points on the unit circle, or about the distribution of primes.
\end{enumerate}

In the other direction, we could not have used a much smaller semigroup.  Indeed, the appropriate results are false for the smaller set $\{ {\theta_5}^r \,:\, r \ge 0 \}$; by analogy with Theorem \ref{tttt}, we might say this set is ``lacunary'' in some sense.

In other words, $\Theta$ is the simplest example that works, in much the same way that $\{2^r 3^s \,:\, r, s \in \Z_{\ge 0} \}$ is the simplest case that works in the \tttt{} Theorem.

\section{Technical results about $\widehat{A}$}
\label{sec-solenoid}

\subsection{Introductory remarks}
The remainder of the paper is dedicated to the proof of Lemma \ref{infinitary-rationality-lemma} by analogy with the \tttt{} Theorem.  However, there are two sources of technical complication.
\begin{itemize}
  \item The proof of the \tttt{} Theorem required a certain amount of detailed knowledge of the compact group $\T$: e.g.~what its torsion points are; what its finite orbits under $\Phi$ are, and so forth.  Transferring the proof to $\widehat{A}$ requires a similar degree of knowledge of $\widehat{A}$.  Mostly these facts are not hard, but require slightly more justification in this less familiar setting.

    This is most pronounced when it comes to proving an analogue of the ``density estimate'' of Lemma \ref{furstenberg-density}.  Recall this was fairly straightforward after taking logarithms.  Essentially the same proof works; but the act of ``taking logarithms'' on $\widehat{A}$ requires a very detailed knowledge of the structure of that space in terms of $p$-adic fields.

    Similar difficulties are overcome in \cite{berend}, although we have not followed that author's approach very closely.
  \item The statement of Lemma \ref{infinitary-rationality-lemma} necessarily involves the ``small complex balls'' $\bar{B}_{1/2}^\C$ that appear in the definition of $U$.  This means that ``small complex errors'' have to be carried through the entire argument, making some of the statements much less transparent.
\end{itemize}

In this section, we quote a number of facts about the detailed structure of $\widehat{A}$, and go on to state some auxiliary results that address these technical difficulties.  These are typically translations of trivial results on $\T$, and where possible we will make the connection explicit.

\subsection{A description of $\widehat{A}$}

Recall that $A$ is the subring of $\Q(i)$ generated by $\Z[i]$, $1 / \bar{\PP_5}$ and $1 / \bar{\PP_{13}}$.  That is, it consists of all elements of $\Q(i)$ whose denominators are of the form $\bar{\PP_5}^r \bar{\PP_{13}}^s$.

So far we have used nothing about $\widehat{A}$ other than its definition and topology.  We now describe the characters on $A$ explicitly.

This material is fairly standard and can be found in many places in the literature.  The application of harmonic analysis to number fields is perhaps most famously associated with \cite{tate}, and much of what follows can be extracted from that paper.  The exact form of the results we need appears in \cite{cew}[Section 3].  The author has found \cite{conrad} to be a very good introduction to these ideas.

However, to keep things as approachable as possible to those unfamiliar with this material, this subsection takes on an expository flavour, while leaving rigorous details to the references.

\subsubsection{A model case: $\widehat{\Z[1/2]}$}

A slightly simpler case to consider is the dual group of $\Z[1/2]$, the ring of dyadic rationals treated as a discrete additive group.  The dual $\widehat{\Z[1/2]}$ is often referred to as a \emph{solenoid} or \emph{the $2$-solenoid}.

Clearly any continuous character on $\R$ restricts to one on $\Z[1/2]$.  The continuous characters on $\R$ can be written
\begin{align*}
  \chi_x \,:\, \R &\rightarrow \T \\
                y & \mapsto \{x\, y\}
\end{align*}
where $\{\cdot\}$ denotes the fractional part; so, analogously to $\jmath_\C \,:\, \C \rightarrow \widehat{A}$ above, we get a map
\begin{align*}
  \jmath_\R \,:\, \R &\rightarrow \widehat{\Z[1/2]} \\
                   x &\mapsto \chi_x|_{\Z[1/2]} \ .
\end{align*}
(It is not hard to verify that in fact $\jmath_\R$ is injective.)

As well as being contained in the reals, $\Z[1/2]$ is also contained in the $2$-adic rationals $\Q_2$; indeed, $\Q_2$ is the completion of $\Z[1/2]$ with respect to the $2$-adic metric.  So analogously, any continuous character of $\Q_2$ restricts to a character of $\Z[1/2]$.  The continuous characters of $\Q_2$ are all of the form:
\begin{align*}
  \chi_a \,:\, \Q_2 &\rightarrow \T \\
                  b &\mapsto \{a\, b\}_2
\end{align*}
where $a \in \Q_2$ and $\{x\}_2$ is the ``$2$-adic fractional part'', defined as the unique dyadic rational in $[0, 1)$ such that $x - \{x\}_2 \in \Z_2$ (the $2$-adic integers).  Hence we get another map
\begin{align*}
  \jmath_{\Q_2} \,:\, \Q_2 &\rightarrow \widehat{\Z[1/2]} \\
                   a &\mapsto \chi_a|_{\Z[1/2]} \ .
\end{align*}
which is also injective.  By combining the real and $2$-adic characters on $\Z[1/2]$, we get a map
\begin{align*}
  \jmath \,:\, \R \times \Q_2 &\rightarrow \widehat{\Z[1/2]} \\
                       (x, a) &\mapsto -\jmath_\R(x) + \jmath_{\Q_2}(a) \ .
\end{align*}
Now, $\jmath$ is not injective, and it is not too difficult to see that $\ker \jmath$ consists of pairs $(r, r)$, where $r \in \Z[1/2]$ is treated as a real number and a $2$-adic number respectively.  Less straightforward is the fact that $\jmath$ is actually \emph{surjective}.  Assuming that fact, we get the following conclusion.

\newcommand{\zhalfhat}[0]{$\widehat{\Z[1/2]}$}
\needspace{2\baselineskip}
\begin{proposition}[{Structure of \zhalfhat}]
  \hspace{0pt}
  \begin{enumerate}[label=(\roman*)]
    \item Consider the diagonal embedding
      \begin{align*}
        \imath^{\Delta} \,:\, \Z[1/2] &\rightarrow \R \times \Q_2 \\
                                    r &\mapsto (r, r) \ .
      \end{align*}
      Then $\imath^{\Delta}(\Z[1/2])$ is a discrete and co-compact subgroup of $\R \times \Q_2$, and $\jmath$ (as defined above) gives rise to an isomorphism of topological groups
      \[
        \tilde{\jmath} \,:\, (\R \times \Q_2) / \imath^{\Delta}(\Z[1/2]) \rightarrow \widehat{\Z[1/2]}
      \]
      where the left hand side is given the natural (product, quotient) topology coming from the usual topologies on $\R$ and $\Q_2$.  In particular, $\widehat{\Z[1/2]}$ is naturally a compact metric space.
    \item A fundamental domain for $\imath^{\Delta}(\Z[1/2])$ is given by $[0, 1) \times \Z_2$.  This does not give a rise to a topological isomorphism or an isomorphism of groups.  It is, however, possible to simplify the above quotient to $(\R \times \Z_2) / \imath^{\Delta}(\Z)$.
  \end{enumerate}
\end{proposition}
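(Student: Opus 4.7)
My plan is to establish part (i) by separately verifying four things about $\imath^\Delta$ and $\jmath$, after which part (ii) will require only a small amount of extra bookkeeping. Specifically, I will show that $\imath^\Delta(\Z[1/2])$ is discrete, that $[0, 1) \times \Z_2$ is a set of coset representatives (giving co-compactness and the first sentence of (ii) simultaneously), that $\ker \jmath = \imath^\Delta(\Z[1/2])$, and that $\jmath$ is surjective. With these in hand, $\tilde{\jmath}$ is a continuous bijection from a compact group onto a Hausdorff topological group, hence automatically a homeomorphism and so an isomorphism of topological groups.

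For discreteness, the open neighbourhood $(-1/2, 1/2) \times \Z_2$ of the origin meets $\imath^\Delta(\Z[1/2])$ only at $0$, since any $r \in \Z[1/2] \cap \Z_2$ is an ordinary integer, and $0$ is the only integer in $(-1/2, 1/2)$. For the fundamental-domain claim, given $(x, a) \in \R \times \Q_2$ I would first subtract $\imath^\Delta(\{a\}_2)$ to bring the $\Q_2$ coordinate into $\Z_2$, then subtract an element of $\imath^\Delta(\Z)$ to bring the real coordinate into $[0, 1)$; uniqueness of the resulting representative is a short check. The kernel computation is then carried out inside this fundamental domain: if $(x, a) \in [0, 1) \times \Z_2$ satisfies $\chi_x|_{\Z[1/2]} = \chi_a|_{\Z[1/2]}$, then evaluating at $r = 1$ forces $x = \{a\}_2 = 0$, and evaluating at $r = 2^{-k}$ for each $k \ge 1$ forces $a \in \bigcap_k 2^k \Z_2 = \{0\}$.

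The main obstacle is surjectivity of $\jmath$. Rather than constructing preimages by hand, my plan is to use Pontryagin duality. The image $H = \mathrm{im}(\jmath)$ is the continuous image under $\jmath$ of the compact set $[0, 1] \times \Z_2$, hence compact, and so closed in $\widehat{\Z[1/2]}$. To show $H$ is also dense, I would compute its annihilator in $\Z[1/2] \cong \widehat{\widehat{\Z[1/2]}}$: since $H$ contains $\jmath_\R(x)$ for every real $x$, any nonzero annihilator $r \in \Z[1/2]$ would need to satisfy $\chi_x(r) = 0$ in $\T$ for all $x \in \R$, which fails at $x = 1/(2r)$. The standard Pontryagin-duality theorem for compact abelian groups (closed subgroups are determined by their annihilators) then forces $\overline{H} = \widehat{\Z[1/2]}$, and closedness of $H$ concludes.

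For the remaining claim of (ii), I would observe that the restriction of the quotient map to $\R \times \Z_2$ is still surjective, because any $(x, a) \in \R \times \Q_2$ is congruent modulo $\imath^\Delta(\Z[1/2])$ to $(x - \{a\}_2,\, a - \{a\}_2) \in \R \times \Z_2$; and its kernel consists exactly of the $\imath^\Delta(r)$ with $r \in \Z[1/2] \cap \Z_2 = \Z$, giving the identification $\widehat{\Z[1/2]} \cong (\R \times \Z_2) / \imath^\Delta(\Z)$.
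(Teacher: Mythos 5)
Your proof is correct, and it is worth noting that the paper does not actually prove this proposition: the surrounding subsection is explicitly expository, the kernel identification is passed over as ``not too difficult to see'', and surjectivity of $\jmath$ --- which the paper itself flags as the non-straightforward point --- is deferred to the references (\cite{cew}, ultimately Tate-style harmonic analysis). You give a self-contained argument instead. The fundamental-domain bookkeeping (discreteness via $(-1/2,1/2)\times\Z_2$, reduction to $[0,1)\times\Z_2$, the kernel computation, and part (ii)) is exactly what a direct verification requires and is all fine; your genuinely distinctive contribution is the surjectivity step, where you replace an explicit construction of preimages by a soft Pontryagin-duality argument: $\mathrm{im}(\jmath)$ is compact (hence closed) and has trivial annihilator in $\Z[1/2]$ because it contains every real character, so it is everything. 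This buys a short, reference-free proof at the cost of invoking the duality between closed subgroups of a compact abelian group and subgroups of its discrete dual as a black box. One small point you should spell out: reducing a general element of $\ker\jmath$ to the fundamental domain modulo $\imath^\Delta(\Z[1/2])$ silently uses the containment $\imath^\Delta(\Z[1/2]) \subseteq \ker\jmath$ (note the sign convention $\jmath(x,a) = -\jmath_\R(x) + \jmath_{\Q_2}(a)$). This follows from the one-line observation that for $r, y \in \Z[1/2]$ one has $\{ry\}_2 - \{ry\} = (ry - \{ry\}) - (ry - \{ry\}_2) \in \Z + \left(\Z[1/2] \cap \Z_2\right) = \Z$, but it should be recorded, since the same containment is also what makes $\tilde{\jmath}$ well defined on the quotient.
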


\subsubsection{Applying to $\widehat{A}$}

Much the same analysis carries over to $\widehat{A}$.  (We will overload notation from the $\Z[1/2]$ case.  Since we will never need to refer to $\Z[1/2]$ in the argument this should not cause too much confusion.)

We have already seen the complex characters in $\widehat{A}$ given by $\jmath_\C$.  To define the appropriate non-Archimedean characters, we have to consider the non-Archimedean fields obtained by completing $\Q(i)$ with respect to the $\bar{\PP_5}$- and $\bar{\PP_{13}}$-adic metrics.

In fact, these completions are canonically (and topologically) isomorphic to the more usual non-Archimedean fields $\Q_5$ and $\Q_{13}$ respectively.  The reason is that $\Q_5$ and $\Q_{13}$ already contain square roots of $-1$; sending $i$ to one of these gives the isomorphism, and which root to choose is specified by the choice of e.g.~$\bar{\PP_5}$ rather than $\PP_5$.  In the sequel, we will talk in terms of $\Q_5$ and $\Q_{13}$ rather than the completions of $\Q(i)$ by $\bar{\PP_5}$ and $\bar{\PP_{13}}$; though we will continue to refer to the absolute values $|\cdot|_{\bar{\PP_5}}$ and $|\cdot|_{\bar{\PP_{13}}}$ on $\Q(i)$ from time to time.

Running the analysis as before gives the following result.

\needspace{2\baselineskip}
\begin{proposition}[The structure of $\widehat{A}$]
  \label{solenoid-general-facts} \hspace{0pt} 
  \nopagebreak
  \begin{enumerate}[label=(\roman*)]
    \item There are canonically specified roots of $-1$, denoted $i_5 \in \Q_5$ and $i_{13} \in \Q_{13}$, such that the following holds.  The field $\Q(i)$ is embedded in $\Q_5$ and $\Q_{13}$ by the unique field homomorphisms
      \begin{align*}
            \imath_{\Q_5} \,:\, \Q(i) &\rightarrow \Q_5 \\
         \imath_{\Q_{13}} \,:\, \Q(i) &\rightarrow \Q_{13}
      \end{align*}
      given by taking $i$ to $i_p$ in each case.  Under these embeddings, the absolute values $|\cdot|_{\bar{\PP_p}}$ on $\Q(i)$ and $|\cdot|_p$ on $\Q_p$ agree (for $p \in \{5, 13\}$); e.g.~$|\imath_{\Q_5}(q)|_5 = |q|_{\bar{\PP_5}}$ for $q \in \Q(i)$, and similarly for $p=13$.\footnote{These absolute values certainly agree up to an arbitrary exponent, and we choose normalizations to make them agree exactly.}
      
    Also, $\Q(i)$ is embedded in $\C$ in the usual way, which we denote $\imath_\C$ for consistency.
    \item Under the diagonal embedding
      \begin{align*}
        \imath^{\Delta} \,:\, \Q(i) &\rightarrow \C \times \Q_5 \times \Q_{13} \\
                              q &\mapsto \left(\imath_\C(q), \imath_{\Q_5}(q), \imath_{\Q_{13}}(q) \right)
      \end{align*}
      the image $\imath^{\Delta}(A)$ of $A$ is discrete and co-compact with respect to the usual metric.
    \item In addition to $\jmath_\C \,:\, \C \rightarrow \widehat{A}$ defined above, we define
      \begin{align*}
        \jmath_{\Q_5} \,:\, \Q_5 &\rightarrow \widehat{A} \\
                               a &\mapsto \left(r \mapsto \{a  \cdot \imath_{\Q_5}(r)\}_5\right)
      \end{align*}
      where $\{\cdot\}_5$ is the $5$-adic fractional part as above, and similarly
      \begin{align*}
        \jmath_{\Q_{13}} \,:\, \Q_{13} &\rightarrow \widehat{A} \\ 
                                     a &\mapsto \left(r \mapsto \{a \cdot \imath_{\Q_{13}}(r)\}_{13}\right) \ .
      \end{align*}
      Combining these, we get a map
      \begin{align*}
        \jmath \,:\, \C \times \Q_5 \times \Q_{13} &\rightarrow \widehat{A} \\
                                         (z, a, b) &\mapsto -\jmath_\C(z) + \jmath_{\Q_5}(a) + \jmath_{\Q_{13}}(b) \ .
      \end{align*}
    \item The kernel of $\jmath$ is precisely $\imath^{\Delta}(A)$, and $\jmath$ is surjective, so $\jmath$ gives rise to a natural isomorphism of topological groups
      \[
        \tilde{\jmath} \,:\, (\C \times \Q_5 \times \Q_{13}) / \imath^{\Delta}(A) \rightarrow \widehat{A} \ .
      \]
    \item Consequently, $\widehat{A}$ is naturally a connected compact metric space, where the metric
      \[
        d_{\widehat{A}}(x, y) = \inf \left\{ |z|_\C + |a|_5 + |b|_{13} \,:\, (z, a, b) \in \C \times \Q_5 \times \Q_{13},\ \jmath(z, a, b) = x - y \right\}
      \]
      is also translation-invariant.
    \item \label{enum:bijection} A fundamental domain for $\imath^{\Delta}(A)$ is given by $[0, 1)^2 \times \Z_5 \times \Z_{13}$.  This does not give rise to a topological isomorphism or an isomorphism of groups.  However, it is possible to simplify the above quotient to $(\C \times \Z_5 \times \Z_{13}) / \imath^{\Delta}(\Z[i])$, though we shall not use this fact.
    \item \label{enum:action} The action $\rho$ of $\Theta$ on $\widehat{A}$ defined above, is equivalent (under $\jmath$) to
      \[
        \rho(\theta) (z, a, b) = (\imath_\C(\theta) z,\,  \imath_{\Q_5}(\theta) a,\,  \imath_{\Q_{13}}(\theta) b)
      \]
      i.e.~to multiplying by $\theta$ on each factor, combined with the appropriate embeddings.  This action in fact makes sense on the whole of $A$, not just $\Theta$.
  \end{enumerate}
\end{proposition}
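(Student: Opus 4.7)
This proposition packages standard Pontryagin-duality facts about $S$-integer subrings of $\Q(i)$, applied to $A = \Z[i][1/\bar{\PP_5}, 1/\bar{\PP_{13}}]$; items (iii), (v), (vi), and (vii) are largely formal once the ring-theoretic picture is in place, and my plan concentrates on (i), (ii), and especially (iv). The strategy for (iv) is to apply Pontryagin duality to the short exact sequence $0 \to \imath^\Delta(A) \to \C \times \Q_5 \times \Q_{13} \to G \to 0$, using self-duality of the locally compact product to identify $\widehat G$ with $A$.

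I would handle (i) by Hensel's lemma on $x^2 + 1$ in $\Z_5$ and $\Z_{13}$ (both primes are $\equiv 1 \pmod 4$), pinning down $i_p$ uniquely by the requirement that $\bar{\PP_p}$ become a uniformizer under the induced embedding -- equivalently $1 - 2 i_5 \equiv 0 \pmod 5$ and $2 - 3 i_{13} \equiv 0 \pmod{13}$ -- which is exactly what makes the two absolute values on $\Q(i)$ agree. For (ii), discreteness is straightforward: if $q = r/(\bar{\PP_5}^a \bar{\PP_{13}}^b) \in A$ in lowest form has $|\imath_{\Q_p}(q)|_p \le 1$ for both $p$, then $a = b = 0$, so $q \in \Z[i]$, of which only finitely many lie in any bounded complex region. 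For cocompactness I would verify that $[0,1)^2 \times \Z_5 \times \Z_{13}$ is a fundamental domain by a three-step reduction: surjectivity of $\Z[i] \to \Z_5/\bar{\PP_5}^n$ (trivial residue extension, since $5$ splits in $\Z[i]$) produces $q_1 \in \bar{\PP_5}^{-n}\Z[i] \subseteq A$ clearing the principal part of $a \in \Q_5$ -- and this $q_1$ lies in $\Z_{13}$ under $\imath_{\Q_{13}}$ because $|\bar{\PP_5}|_{13} = 1$; an analogous $q_2 \in A$ clears the $\Q_{13}$-coordinate without disturbing $a$; and a final $\Z[i]$-translation brings $z$ into $[0,1)^2$.

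The crux is (iv), which splits into surjectivity of $\jmath$ and the kernel computation. Surjectivity is quick: the closed image of $\jmath$ being a proper subgroup would, by Pontryagin duality, produce a nonzero $q \in \widehat{\widehat A} = A$ killed by every $\jmath_\C(z)$, contradicting the fact that for any nonzero $q \in A \subset \C$ one can choose $z$ making $\Re(zq) \notin \Z$. For the kernel, observe that by construction $\jmath(z,a,b)$ is the restriction along $\imath^\Delta$ of the natural character on $\C \times \Q_5 \times \Q_{13}$ associated to $(z,a,b)$ through the self-dual pairing $(z',a',b') \mapsto -\Re(zz') + \{aa'\}_5 + \{bb'\}_{13}$, so $\ker\jmath$ equals the annihilator of $\imath^\Delta(A)$ in the self-dual of the product. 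The main obstacle -- and the step I expect to be technically most delicate -- is identifying this annihilator with $\imath^\Delta(A)$ itself (a form of the product formula for $\Q(i)$, closely related to the self-duality results in Tate's thesis). I would prove one inclusion by direct computation on the ring generators $\{1, i, 1/\bar{\PP_5}, 1/\bar{\PP_{13}}\}$ of $A$, taking care over the normalization of the archimedean contribution (which most naturally enters via $\mathrm{Tr}_{\C/\R}$), and then deduce the reverse inclusion from a Haar-measure count: both lattices are cocompact in $\C \times \Q_5 \times \Q_{13}$, and the fundamental-domain volume already computed in (ii) forces equality.

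With (iv) in hand, the remaining assertions are essentially bookkeeping: (v) follows from the natural product metric descending to the quotient; (vi) is already contained in the fundamental-domain construction of (ii); and (vii) reduces to expanding $\rho(\theta)\jmath(z,a,b)(r) = \jmath(z,a,b)(\theta r)$ and using that each $\imath_{\Q_p}$ is a ring homomorphism to rewrite the result as $\jmath(\imath_\C(\theta)z, \imath_{\Q_5}(\theta)a, \imath_{\Q_{13}}(\theta)b)(r)$.
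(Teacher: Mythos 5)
Your outline is correct, but it is worth noting that the paper itself does not prove this proposition at all: the substantive parts (ii) and (iv) are delegated to \cite{cew}[Theorem 3.1] (ultimately to the adelic duality machinery of Tate's thesis), and the rest is declared to be an easy consequence. What you propose is a self-contained proof along exactly the standard lines those references use — Hensel's lemma for (i), a strong-approximation-style clearing of denominators to exhibit the fundamental domain for (ii), and Pontryagin self-duality of $\C \times \Q_5 \times \Q_{13}$ plus a Haar-measure comparison of the two cocompact lattices for (iv) — so in spirit it is the same route, just carried out rather than cited.

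Three points deserve attention. First, your flag about the archimedean normalization is not just prudence: with the paper's literal convention $\chi_z(w) = \Re(zw)$ the diagonal is \emph{not} annihilated. For $s = 1/\bar{\PP_5} = (1+2i)/5$ one has $\Re(s) = 1/5$ while $\{\imath_{\Q_5}(s)\}_5 = 2/5$ and $\{\imath_{\Q_{13}}(s)\}_{13} = 0$, so $-\Re(s) + \{\imath_{\Q_5}(s)\}_5 + \{\imath_{\Q_{13}}(s)\}_{13} = 1/5 \notin \Z$; the correct global relation is $\mathrm{Tr}_{\C/\R}(s) \equiv \{\imath_{\Q_5}(s)\}_5 + \{\imath_{\Q_{13}}(s)\}_{13} \pmod{\Z}$, so one must either use the trace pairing or rescale the complex coordinate (harmless for everything downstream, but your "direct computation on the generators" would otherwise be verifying a false identity). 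Second, $\{1, i, 1/\bar{\PP_5}, 1/\bar{\PP_{13}}\}$ generate $A$ as a ring, not as an additive group; the inclusion $\imath^\Delta(A) \subseteq \ker\jmath$ should be checked on the additive generators $i^a \bar{\PP_5}^{-r}\bar{\PP_{13}}^{-s}$, or deduced from the observation that $\jmath(\imath^\Delta(q))(r)$ depends only on $qr \in A$ and that $s \mapsto -\mathrm{Tr}(s) + \{\imath_{\Q_5}(s)\}_5 + \{\imath_{\Q_{13}}(s)\}_{13}$ is additive. Third, the duality step in your surjectivity argument only yields density of the image of $\jmath$; to upgrade this to surjectivity you need the image to be closed, which does follow from your part (ii) since the image equals $\jmath\bigl([0,1]^2 \times \Z_5 \times \Z_{13}\bigr)$, the continuous image of a compact set. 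All three repairs are routine, so I regard the proposal as sound.
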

\begin{proof}[Remarks on the proof]
  We have already given an overview of the approach to proving this.  The content is in parts (ii) and (iv), for which we refer the reader to \cite{cew}[Theorem 3.1] for a discussion of the rigorous details.  The remaining parts are easy consequences of these together with well-known facts from the theory of local fields.
  \renewcommand{\qedsymbol}{{}}
\end{proof}

With this as background, we move on to some auxiliary results.

\subsection{Torsion points and periodic points}

\begin{definition}
  For a positive integer $m$, we define $\Theta^{(m)} = \left\{ {\theta_5}^{r m} {\theta_{13}}^{s m} \,:\, r, s \in \Z_{\ge 0} \right\}$, a subsemigroup of $\Theta$.

  We say a point $x \in \widehat{A}$ is \emph{periodic} if $\Theta^{(m)}$ fixes $x$ for some $m$.
\end{definition}

Recall that, in the proof of the \tttt{} Theorem, we made an analogous definition of $\Phi^{(m)}$, and points fixed by $\Phi^{(m)}$ played an important role in the argument.  We implicitly used the fact that the periodic points in the context of $\T$ are precisely rationals whose denominator is coprime to $2$ and $3$.

A related -- and completely trivial -- fact is that the torsion points of $\T$ are precisely the rationals.  So, if $x \in \T$ is a torsion point then $2^r 3^s x$ is periodic for some $r, s \ge 0$.

In this subsection we transfer some of these results to $\widehat{A}$.  First we classify the periodic and torsion points.  Those proofs not given here can be found in Appendix \ref{appA}.

\begin{proposition}
  \label{periodic-points}
  Let $x \in \widehat{A}$, and take $(z, a, b) \in \C \times \Q_5 \times \Q_{13}$ any representative of $x$ (i.e.~$\jmath(z, a, b) = x)$.
  \begin{enumerate}[label=(\roman*)]
    \item \label{enum:torsion-equiv} The following are equivalent:
      \begin{enumerate}[label=(\alph*), ref=\alph*]
        \item \label{torsion-cond} $x$ is torsion;
        \item \label{torsion-rationality-cond} there exists $q \in \Q(i)$ such that $z = \imath_\C(q)$, $a = \imath_{\Q_5}(q)$ and $b = \imath_{\Q_{13}}(q)$; equivalently, $(z, a, b) \in \imath^{\Delta}(\Q(i))$;
        \item \label{orbit-finite-cond} the orbit $\Theta(x)$ is finite.
      \end{enumerate}
    \item \label{enum:periodic-equiv} The following are equivalent:
      \begin{enumerate}[label=(\alph*), ref=\alph*]
        \item \label{fixed-point-cond} $x$ is periodic;
        \item \label{periodic-rationality-cond} there is some $q \in \Q(i)$ such that $z = \imath_\C(q)$, $a = \imath_{\Q_5}(q)$, $b = \imath_{\Q_{13}}(q)$, and additionally $|q|_{\PP_5},\, |q|_{\PP_{13}} \le 1$.
      \end{enumerate}
  \end{enumerate}
\end{proposition}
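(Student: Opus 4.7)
My plan is to reduce both parts of Proposition \ref{periodic-points} to direct computations on representatives $(z, a, b) \in \C \times \Q_5 \times \Q_{13}$ of $x$, using Proposition \ref{solenoid-general-facts}(iv) (the kernel of $\jmath$ is $\imath^{\Delta}(A)$) and (vii) (the $A$-action lifts to componentwise multiplication). A key background fact I will use repeatedly is that $\imath_\C$, $\imath_{\Q_5}$, $\imath_{\Q_{13}}$ are injective field homomorphisms, so any nonzero $\gamma \in \Q(i)$ acts as an injection on each factor of $\C \times \Q_5 \times \Q_{13}$.

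For part (i), the equivalence (a) $\Leftrightarrow$ (b) is essentially a rearrangement: $nx = 0$ means $n(z,a,b) = \imath^{\Delta}(r)$ for some $r \in A$, which gives $(z,a,b) = \imath^{\Delta}(r/n)$ with $r/n \in \Q(i)$; the converse follows by clearing denominators of $q$ to produce a positive integer $n$ with $nq \in \Z[i] \subseteq A$. For (a) $\Rightarrow$ (c), I note that $\Theta \subseteq A$ preserves the subgroup $\widehat{A}[n]$ of $n$-torsion points, and this subgroup is finite: by Pontryagin duality $\widehat{A}[n] \cong \widehat{A/nA}$, and $A/nA$ is finite as a localization of the finite ring $\Z[i]/n\Z[i]$. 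For (c) $\Rightarrow$ (b), pigeonhole on $\{{\theta_5}^r x : r \ge 0\}$ yields $r_1 < r_2$ with ${\theta_5}^{r_1} x = {\theta_5}^{r_2} x$; the element $\gamma := {\theta_5}^{r_2} - {\theta_5}^{r_1} \in A$ is nonzero because $\theta_5$ is not a root of unity (as $\PP_5^k \ne \bar{\PP_5}^k$ in $\Z[i]$ for $k > 0$, by unique factorization). So $\gamma (z,a,b) = \imath^{\Delta}(r)$ for some $r \in A$, and applying $\gamma^{-1}$ coordinatewise yields $(z,a,b) = \imath^{\Delta}(r/\gamma)$.

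For part (ii), periodicity implies $({\theta_5}^m - 1) x = 0$ for some $m$, and since ${\theta_5}^m - 1$ is a nonzero element of $A$, the argument of (c)$\Rightarrow$(b) above shows $x$ is torsion, with some $\Q(i)$-representative $q$. Then $x$ is periodic iff for some common $m$, both $({\theta_5}^m - 1) q \in A$ and $({\theta_{13}}^m - 1) q \in A$. I will analyze this prime-by-prime using the fact that $r \in A$ iff $|r|_\pi \le 1$ for every prime $\pi \ne \bar{\PP_5}, \bar{\PP_{13}}$. The crucial rigidity is the computation $|{\theta_5}^m - 1|_{\PP_5} = 1$ for every $m \ge 1$ (since $|\theta_5|_{\PP_5} = 1/5 \ne 1$, so the two terms sit at different ultrametric heights), which forces $|q|_{\PP_5} \le 1$; symmetrically $|{\theta_{13}}^m - 1|_{\PP_{13}} = 1$ forces $|q|_{\PP_{13}} \le 1$.

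The main obstacle is the converse at the remaining primes. At any prime $\pi \notin \{\PP_5, \bar{\PP_5}, \PP_{13}, \bar{\PP_{13}}\}$, both $\theta_5$ and $\theta_{13}$ are $\pi$-adic units, hence have finite orders in each finite group $(\Z[i]/\pi^k)^*$. Since the denominator of $q$ involves only finitely many such $\pi$, I can take $m$ to be a sufficiently large common multiple of these orders (for the value of $k$ required by $q$ at each $\pi$), driving $|{\theta_5}^m - 1|_\pi$ and $|{\theta_{13}}^m - 1|_\pi$ simultaneously below any prescribed threshold. Combined with the two rigid conditions at $\PP_5$ and $\PP_{13}$, this gives the stated equivalence with $|q|_{\PP_5}, |q|_{\PP_{13}} \le 1$; bookkeeping the finitely many primes of $q$ and verifying invariance of the condition under changing the representative $q$ within its $A$-coset are the main routine chores.
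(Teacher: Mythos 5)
Your proposal is correct and follows essentially the same route as the paper: unwinding torsion/periodicity on a representative $(z,a,b)$, a pigeonhole argument for finite orbits, finiteness of the $n$-torsion via $A/nA$, and the key ultrametric computation that $|{\theta_5}^m-1|_{\PP_5}=|{\theta_{13}}^m-1|_{\PP_{13}}=1$ forcing $|q|_{\PP_5},|q|_{\PP_{13}}\le 1$. Your prime-by-prime choice of $m$ for the converse of (ii) is just an unpacked version of the paper's choice $m=|(A/nA)^\times|$ with $n$ coprime to $65$.
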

Note the appearance of $|\cdot|_{\PP_5}$ and $|\cdot|_{\PP_{13}}$ in \ref{enum:periodic-equiv} (\ref{periodic-rationality-cond}); \emph{not} $|\cdot|_{\bar{\PP_5}}$ and $|\cdot|_{\bar{\PP_{13}}}$.

One useful application is the following corollary.

\begin{cor}
  \label{torsion-nearly-implies-periodic}
  If $x \in \widehat{A}$ is torsion then for some $\theta \in \Theta$, $\theta( x)$ is periodic.
\end{cor}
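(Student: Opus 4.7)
The plan is to combine the two characterizations in Proposition \ref{periodic-points}: a torsion point is precisely one represented diagonally by some $q \in \Q(i)$, while a periodic point is one so represented by a $q$ satisfying the additional constraint $|q|_{\PP_5}, |q|_{\PP_{13}} \le 1$. So given a torsion $x$, I would first use part \ref{enum:torsion-equiv} (condition \ref{torsion-rationality-cond}) to write $x = \jmath(\imath^{\Delta}(q))$ for some $q \in \Q(i)$. By Proposition \ref{solenoid-general-facts}\ref{enum:action}, the $\Theta$-action on $\widehat{A}$ transfers under $\jmath$ to ordinary multiplication inside $\Q(i)$, so $\theta(x) = \jmath(\imath^{\Delta}(\theta q))$ for every $\theta \in \Theta$. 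Consequently, by part \ref{enum:periodic-equiv} (condition \ref{periodic-rationality-cond}), it suffices to locate $\theta \in \Theta$ for which $|\theta q|_{\PP_5}, |\theta q|_{\PP_{13}} \le 1$.

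Next, I would read off the valuations of the generators at $\PP_5$ and $\PP_{13}$. Since $\PP_5$ and $\bar{\PP_5}$ are distinct conjugate primes of $\Z[i]$ lying over $5$, the denominator $\bar{\PP_5}$ of $\theta_5 = \PP_5/\bar{\PP_5}$ is a unit at $\PP_5$ while the numerator is a uniformizer, yielding $v_{\PP_5}(\theta_5) = 1$; and $v_{\PP_5}(\theta_{13}) = 0$ because $\theta_{13}$ involves only primes over $13$, which are units at $\PP_5$. Swapping the roles of $5$ and $13$ gives $v_{\PP_{13}}(\theta_{13}) = 1$ and $v_{\PP_{13}}(\theta_5) = 0$. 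In particular,
\[
  v_{\PP_5}(\theta_5^r \theta_{13}^s q) = r + v_{\PP_5}(q), \qquad v_{\PP_{13}}(\theta_5^r \theta_{13}^s q) = s + v_{\PP_{13}}(q).
\]
Setting $r = \max(0, -v_{\PP_5}(q))$ and $s = \max(0, -v_{\PP_{13}}(q))$, both nonnegative, the element $\theta := \theta_5^r \theta_{13}^s \in \Theta$ forces both valuations to be nonnegative, i.e.~$|\theta q|_{\PP_5}, |\theta q|_{\PP_{13}} \le 1$, as required.

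I do not anticipate any genuine obstacle in this argument: it rests entirely on the observation that $\theta_5$ and $\theta_{13}$ were tailored to be uniformizers at $\PP_5, \PP_{13}$ respectively, while remaining units at the conjugate primes $\bar{\PP_5}, \bar{\PP_{13}}$. This is precisely the asymmetry that makes the unbarred valuations appear in Proposition \ref{periodic-points}\ref{enum:periodic-equiv} — had the periodicity condition involved $|\cdot|_{\bar{\PP_5}}$ and $|\cdot|_{\bar{\PP_{13}}}$ instead, the elements of $\Theta$ (having denominators of that very form) would make things \emph{worse} rather than better, and the corollary would fail.
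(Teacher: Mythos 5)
Your proof is correct and takes essentially the same approach as the paper, which simply says to ``pick a $\theta \in \Theta$ to remove powers of $\PP_5$ and $\PP_{13}$ from the denominator'' of $q$; your explicit valuation computation ($v_{\PP_5}(\theta_5)=1$, $v_{\PP_5}(\theta_{13})=0$, etc.) just spells out why such a $\theta$ exists.
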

\begin{proof}
  Given a $q \in \Q(i)$ satisfying \ref{enum:torsion-equiv}(\ref{torsion-rationality-cond}), we can pick a $\theta \in \Theta$ to remove powers of $\PP_5$ and $\PP_{13}$ from the denominator.  Then $\theta\, q$ satisfies \ref{enum:periodic-equiv}(\ref{periodic-rationality-cond}).
\end{proof}

\subsection{Unions of complex balls}

By a ``complex ball' in $\widehat{A}$ we mean a set of the form $\bar{B}_R^\C(x)$, where we recall $\bar{B}_R^\C(0)$ is the image of the ball of radius $R$ about $0$ in the complex plane under the map $\jmath_\C$, and $\bar{B}_R^\C(x)$ is its translate by $x \in \widehat{A}$.

The proof of the \tttt{} Theorem made use of the following trivial facts.
\begin{enumerate}[label=(\roman*)]
  \item If $Y \subseteq \T$ is a finite $\Phi^{(m)}$-invariant set, then $Y$ consists entirely of rationals.
  \item If $x \in \T$ is irrational then its orbit $\Phi^{(m)}(x)$ is infinite.
\end{enumerate}
In adapting these to our context, we not only need to talk about torsion points of $\widehat{A}$ in place of rationals, but also introduce ``small complex errors'' into the hypotheses and conclusions, as discussed in the introduction to this section. Here is the resulting analogous statement.

\begin{proposition}
  \hspace{0pt}
  \label{complex-by-torsion-lemma}
  \begin{enumerate}[label=(\roman*)]
    \item \label{enum:cbf-implies-cbt} Suppose $Y \subseteq \widehat{A}$ is closed, $\Theta$-invariant, and contained in a finite union of complex balls (equivalently, of unit complex balls).  Then it is contained in a finite union of complex balls with centers at torsion points of $\widehat{A}$.

    \item \label{enum:cbf-iff-cpt} Let $x \in \widehat{A}$.  Then the orbit closure $\bar{\Theta(x)}$ is contained in a finite union of complex balls, if and only if $x$ has the form $x = y + \jmath_\C(z)$ where $y$ is torsion and $z \in \C$.
  \end{enumerate}
  Moreover, the above holds replacing $\Theta$ by $\Theta^{(m)}$.
\end{proposition}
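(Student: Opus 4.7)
The plan is to prove part (ii) first and then derive part (i) from it, working in the coordinate description $\widehat{A} \cong (\C \times \Q_5 \times \Q_{13})/\imath^\Delta(A)$ from Proposition \ref{solenoid-general-facts}. In these coordinates $\Theta$ acts by coordinatewise multiplication, complex balls $\bar{B}_R^\C(x)$ are translates of $\jmath_\C(\bar{B}_R(0))$, and the torsion points are precisely the classes of $\imath^\Delta(\Q(i))$. A key observation used throughout is that $|\imath_\C(\theta)| = 1$ for $\theta \in \Theta$, so $\theta(\bar{B}_R^\C(x)) = \bar{B}_R^\C(\theta(x))$: $\Theta$ permutes complex balls (as translates), without changing their radii.

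For (ii), direction $(\Leftarrow)$ is immediate: if $x = y + \jmath_\C(z)$ with $y$ torsion, then $\Theta(y)$ is finite by Proposition \ref{periodic-points}, and $\Theta(\jmath_\C(z)) \subseteq \jmath_\C(\bar{B}_{|z|}(0))$, so $\Theta(x)$ lies in the finite union $\bigcup_{y' \in \Theta(y)} \bar{B}_{|z|}^\C(y')$, which is closed since each piece is a continuous image of a compact set. For $(\Rightarrow)$, assume $\overline{\Theta(x)} \subseteq \bigcup_{i=1}^k \bar{B}_R^\C(x_i)$, pick a representative $(z_0, a, b)$ of $x$, and by pigeonhole produce distinct $\theta_1, \theta_2 \in \Theta$ with $\theta_1(x)$ and $\theta_2(x)$ in the same ball $\bar{B}_R^\C(x_i)$. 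Then $(\theta_1 - \theta_2)(x) = \theta_1(x) - \theta_2(x) \in \jmath_\C(\bar{B}_{2R}(0))$, which in coordinates becomes
\[
  \bigl((\theta_1 - \theta_2) z_0,\, (\theta_1 - \theta_2) a,\, (\theta_1 - \theta_2) b\bigr) - (w, 0, 0) \in \imath^\Delta(A)
\]
for some $|w| \le 2R$. Matching $p$-adic coordinates forces $(\theta_1 - \theta_2) a = \imath_{\Q_5}(q)$ and $(\theta_1 - \theta_2) b = \imath_{\Q_{13}}(q)$ for a common $q \in A$. Since $\theta_1 - \theta_2$ is a nonzero element of $\Q(i)$ and $\imath_{\Q_5}, \imath_{\Q_{13}}$ are injective field embeddings, $\theta_1 - \theta_2$ is invertible in both completions, so $(a, b) = (\imath_{\Q_5}(q'), \imath_{\Q_{13}}(q'))$ for the single element $q' = q/(\theta_1 - \theta_2) \in \Q(i)$. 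Rewriting $(z_0, a, b) = \imath^\Delta(q') + (z_0 - \imath_\C(q'), 0, 0)$ exhibits $x$ as a torsion point plus an element of $\jmath_\C(\C)$.

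For (i), let $Y$ be closed, $\Theta$-invariant, and contained in $\bigcup_{i=1}^k \bar{B}_1^\C(x_i)$. Applying (ii) to each $y \in Y$ (using $\overline{\Theta(y)} \subseteq Y$) gives a decomposition $y = y_0(y) + \jmath_\C(z(y))$ with $y_0(y)$ torsion. This decomposition is unique because the only torsion point in $\jmath_\C(\C)$ is $0$: if $\jmath_\C(w)$ is torsion then $n(w,0,0) \in \imath^\Delta(A)$ for some $n$, and injectivity of $\imath_{\Q_5}$ and $\imath_{\Q_{13}}$ forces the lattice element to vanish, so $w = 0$. Consider the set-theoretic projection $\pi \colon \widehat{A} \to \widehat{A}/\jmath_\C(\C)$; then $\pi(Y) \subseteq \{\pi(x_1), \dots, \pi(x_k)\}$ is finite, and within each fiber the uniqueness above allows at most one torsion point. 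Hence $y_0(y)$ takes only finitely many values $y_0^1, \dots, y_0^\ell$ as $y$ ranges over $Y$. To bound $|z(y)|$ uniformly: for each pair $(i, j)$ with some $y \in Y \cap \bar{B}_1^\C(x_i)$ satisfying $y_0(y) = y_0^j$, comparing the two decompositions of $y$ shows $y_0^j - x_i = \jmath_\C(c_{i,j})$ for a fixed $c_{i,j} \in \C$, whence $|z(y)| \le |c_{i,j}| + 1$. Setting $C$ to be the maximum over the finitely many such pairs, $Y \subseteq \bigcup_{j=1}^\ell \bar{B}_C^\C(y_0^j)$, as required.

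The argument applies verbatim to $\Theta^{(m)}$, as it only uses that $\Theta^{(m)}$ is an infinite commutative subsemigroup of $\Theta$ whose elements have complex modulus $1$, and that distinct elements have invertible differences in $\Q(i)$. The main obstacle is the pigeonhole-and-inversion step in part (ii): the decisive input is number-theoretic, namely that a single $q' \in \Q(i)$ can be chosen to match the $p$-adic coordinates of $x$ simultaneously in $\Q_5$ and $\Q_{13}$, collapsing the ostensibly two-dimensional $p$-adic data of $x$ into a single torsion point. Once this is in hand, part (i) is essentially a finiteness argument powered by the triviality of $\text{torsion} \cap \jmath_\C(\C)$.
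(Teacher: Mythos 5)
Your proof is correct and rests on the same key step as the paper's: pigeonhole two distinct elements $\theta_1, \theta_2$ of the semigroup whose images of $x$ land in the same complex ball, then use the invertibility of $\theta_1 - \theta_2$ in $\Q_5$ and $\Q_{13}$ to conclude that the non-Archimedean coordinates of $x$ come from a single element of $\Q(i)$. The only organizational difference is that you prove \ref{enum:cbf-iff-cpt} first and derive \ref{enum:cbf-implies-cbt} from it --- which costs you the extra argument that the torsion parts take finitely many values and the radii are uniformly bounded --- whereas the paper applies the pigeonhole directly to ball centers chosen inside $Y$, proving \ref{enum:cbf-implies-cbt} first and obtaining \ref{enum:cbf-iff-cpt} as an immediate corollary.
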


\subsection{Non-Archimedean limits}

Recall the crucial result for proof of the \tttt{} Theorem (Corollary \ref{furstenberg-zero-limit}): if $Y \subseteq \T$ is closed, invariant and has $0$ as a limit point then $Y = \T$.

The naive generalization to $\widehat{A}$ -- i.e.~that if $Y \subseteq \widehat{A}$ is closed, invariant and has $0$ as a limit point then $Y = \widehat{A}$ -- is false, as again can be seen by taking $Y = \bar{B}_{1/10}^\C(0)$.

However, this counterexample is in some sense the only one.  The view taken is as follows: in the same way that the singleton $\{0\} \subseteq \T$ does not have $0$ as a limit point -- the constant $0$ sequence not being a valid way to approach $0$ -- similarly, $\bar{B}_{1/10}^\C(0)$ should not have $0$ as a ``proper'' limit point, because sequences approaching $0$ along a complex ball around $0$ should be likewise invalid.

This is formalized in the following definition.
\begin{definition}
  We say $x$ is a \emph{non-Archimedean limit point} of $Y \subseteq \widehat{A}$ if $x$ is a limit point of $Y \setminus B_{1/10}^\C(x)$;  equivalently, if there is a sequence $x_n \rightarrow x$ of points of $Y$ such that $x_n - x \notin B_{1/10}^\C(0)$ (in which case we say $x$ is a non-Archimedean limit of $x_n$).  Denote by $Y'$ the set of non-Archimedean limit points of $Y$.
\end{definition}
\begin{remark}
  The constant $1/10$ appearing in this definition is completely unimportant, and altering it does not affect the definition.
\end{remark}

The analogue of the above-mentioned result will have to wait (see Lemma \ref{zero-limit-lemma}); for now we record some basic facts about these non-Archimedean limits.

\begin{proposition}
  \label{limit-is-closed}
  For any $Y \subseteq \widehat{A}$, $Y'$ is closed.  If $Y$ is $\Theta^{(m)}$-invariant, then so is $Y'$.
\end{proposition}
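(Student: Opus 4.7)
My plan is to handle the two assertions — closedness and $\Theta^{(m)}$-invariance of $Y'$ — by separate but parallel arguments, both leaning on one structural observation about $\widehat{A}$: the map $\jmath_\C$ is a \emph{local} isometry near $0$. Specifically, using the product formula for $\Q(i)$, one shows that for $s \in \C$ with $|s| < \sqrt{2}$, $d_{\widehat{A}}(\jmath_\C(s), 0) = |s|$: any alternative representative of $\jmath_\C(s)$ corresponding to $q \in A \setminus \{0\}$ has norm at least $|q|_\infty - |s| + |q|_{\bar{\PP_5}} + |q|_{\bar{\PP_{13}}} \ge 2\sqrt 2 - |s| > |s|$ (using $|q|_\infty \ge 1/\sqrt{|q|_{\bar{\PP_5}}\,|q|_{\bar{\PP_{13}}}}$ from the product formula, then minimizing). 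A direct consequence is that $\jmath_\C(\bar{B}_r(0))$ is a compact subset of $\widehat{A}$ whose topology faithfully reflects that of $\bar{B}_r(0) \subseteq \C$ for $r < \sqrt{2}$.

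For closedness I prove the contrapositive. Suppose $y \notin Y'$; using the defining property of $Y'$ together with the local isometry (to eliminate ``large-$s$'' representatives), I can find an open ball $U = B_r(y)$ with $r < 1/20$ such that $U \cap Y \subseteq y + \jmath_\C(B_r(0))$, whose closure lies in the compact $y + \jmath_\C(\bar{B}_r(0))$. Setting $V := B_{r/2}(y)$ and supposing $y' \in V \cap Y'$: since $y' \in \overline{Y}$ we obtain $y' - y = \jmath_\C(t)$ for some $|t| \le r$, and a $Y'$-witness $x \in Y$ within $r/2$ of $y'$ must lie in $U$, so $x - y = \jmath_\C(s)$ with $|s| < r$; then $x - y' = \jmath_\C(s - t)$ has $|s - t| < 2r < 1/10$, placing $x - y' \in B_{1/10}^\C(0)$ and contradicting the defining property of $y' \in Y'$. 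Hence $V \cap Y' = \emptyset$.

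For $\Theta^{(m)}$-invariance, let $y \in Y'$ with witness $x_n \in Y$, $x_n \to y$, $x_n - y \notin B_{1/10}^\C(0)$, and fix $\theta \in \Theta^{(m)}$. The natural candidate witness for $\rho(\theta)(y) \in Y'$ is $z_n := \rho(\theta)(x_n) \in Y$, with $z_n - \rho(\theta)(y) = \rho(\theta)(x_n - y)$. The main obstacle is that $\rho(\theta)$ is generally \emph{not} injective — its kernel is naturally identified with the (finite, non-trivial) Pontryagin dual of $A/\theta A$ — so the condition $\notin B_{1/10}^\C(0)$ is not preserved in an obvious way. To sidestep this, I invoke the paper's remark that the constant $1/10$ is unimportant: combined with the local isometry, this forces $x_n - y$ (after extracting a subsequence) to either (i) avoid $\jmath_\C(\C)$ entirely, or (ii) equal $\jmath_\C(s_n)$ with $|s_n| \to \infty$. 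Case (ii) is immediate: $\rho(\theta)(x_n - y) = \jmath_\C(\theta s_n)$ with $|\theta s_n| \to \infty$. In case (i), either $\rho(\theta)(x_n - y) \notin \jmath_\C(\C)$ (automatically avoiding $B_{1/10}^\C(0)$), or $\rho(\theta)(x_n - y) = \jmath_\C(s_n)$; in the latter subcase $x_n - y = \jmath_\C(s_n/\theta) + k_n$ with $k_n \in \ker \rho(\theta) \setminus \{0\}$, and finiteness of the kernel lets me take $k_n = k$ constant. Since non-zero kernel elements cannot lie in $\jmath_\C(\C)$ (by injectivity of $\jmath_\C$), the convergence $\jmath_\C(s_n/\theta) \to -k$ forces $|s_n| \to \infty$, reducing to case (ii).
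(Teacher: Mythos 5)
Your argument is correct, but it takes a genuinely different and far more explicit route than the paper's, which proves closedness in one line via the identity $Y' = \bigcap_{x \in \widehat{A}} \overline{Y \setminus B^{\C}_{1/10}(x)}$ and dismisses invariance as clear. For closedness you show directly that the complement of $Y'$ is open; the quantitative input you extract from the product formula --- that $\jmath_\C$ is an isometry onto its image on balls of radius less than $\sqrt{2}$, which amounts to the discreteness of $\imath^{\Delta}(A)$ from Proposition \ref{solenoid-general-facts} --- is exactly what is needed to justify the non-obvious inclusion $Y' \subseteq \bigcap_{x} \overline{Y \setminus B^{\C}_{1/10}(x)}$ in the paper's identity, so the mathematical content of the two closedness proofs is essentially the same, with yours spelled out in full. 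The real divergence is in the invariance step: you correctly observe that $\rho(\theta)$ has a non-trivial finite kernel (dual to $A/\theta A$), so the witness condition $x_n - y \notin B^{\C}_{1/10}(0)$ is not formally inherited by $\rho(\theta)(x_n - y)$, and your resolution --- non-zero kernel elements cannot lie in $\jmath_\C(\C)$, so after passing to a subsequence with constant kernel element the offending representatives are forced off to infinity in $\C$ --- is sound. This fills in a point that the paper's ``invariance is clear'' passes over; the cost of your approach is length, the benefit is that nothing is left implicit.
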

This is, of course, analogous to the corresponding properties of the set of usual limit points of a set.
\begin{proof}
  We can write $Y'$ as:
  \[
    Y' = \bigcap_{x \in \widehat{A}} \left( \bar{Y \setminus B_{1/10}^\C(x)} \right)
  \]
  which is clearly closed.  Invariance is clear.
\end{proof}

Also recall that we used the fact that an infinite set in a compact space has a limit point.  Below is an analogue for non-Archimedean limits.

\begin{proposition}
  \label{infinite-limit-lemma}
  Suppose $Y \subseteq \widehat{A}$ is closed and not contained in a finite union of complex balls.  Then $Y'$ is non-empty, and $0 \in (Y - Y)'$.
\end{proposition}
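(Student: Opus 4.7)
My plan is to prove the two assertions in sequence: the substantive part is the first claim (that $Y'$ is non-empty), after which the second ($0 \in (Y-Y)'$) follows immediately by translation. No detailed structure of $\widehat{A}$ from Section~\ref{sec-solenoid} is needed -- only that $\widehat{A}$ is compact (Proposition~\ref{solenoid-general-facts}(v)).

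For the first claim I argue by contraposition. If $Y' = \emptyset$, then for each $x \in \widehat{A}$ the failure of $x$ to be a limit point of $Y \setminus B_{1/10}^\C(x)$ yields an open neighborhood $U_x$ of $x$ disjoint from $Y \setminus B_{1/10}^\C(x)$. Observing that $x \in B_{1/10}^\C(x)$ (since $\jmath_\C(0) = 0$), this simplifies to $U_x \cap Y \subseteq B_{1/10}^\C(x)$. The $U_x$ form an open cover of the compact space $\widehat{A}$, so there is a finite subcover $U_{x_1}, \ldots, U_{x_k}$, giving $Y \subseteq \bigcup_{i=1}^k B_{1/10}^\C(x_i)$ and contradicting the hypothesis that $Y$ is not contained in a finite union of complex balls.

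For the second claim I pick any $y \in Y'$; this lies in $Y$ because $Y$ is closed and $Y' \subseteq \overline{Y}$ (every neighborhood of a non-Archimedean limit point meets $Y$). By definition there is a sequence $y_n \in Y$ with $y_n \to y$ and $y_n - y \notin B_{1/10}^\C(0)$. Setting $w_n := y_n - y \in Y - Y$, continuity of subtraction in $\widehat{A}$ gives $w_n \to 0$, while $w_n \notin B_{1/10}^\C(0)$ by construction, so $0$ is a non-Archimedean limit point of $Y - Y$. There is no serious obstacle here; the only mild bookkeeping point is the observation $x \in B_{1/10}^\C(x)$, needed to convert ``$x$ is not a non-Archimedean limit'' into a clean containment $U_x \cap Y \subseteq B_{1/10}^\C(x)$ without fussing over the exceptional point $x$ itself. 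In essence this is the classical argument that any infinite subset of a compact space has a limit point, transplanted to the non-Archimedean definition.
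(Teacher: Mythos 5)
Your proof is correct, but it reaches the conclusion by a different route from the paper. The paper runs the Bolzano--Weierstrass version of the classical argument: using the hypothesis it greedily selects a sequence $x_i \in Y$ with $x_i - x_j \notin \bar{B}_1^\C(0)$ for $i \ne j$, extracts a convergent subsequence $x_i \to x \in Y$ by compactness, and then observes that $x_i - x \in \bar{B}_{1/2}^\C(0)$ can hold for at most one index $i$ (else two such indices would violate the separation), so that after deleting the exceptional term the single sequence simultaneously witnesses $x \in Y'$ and $0 \in (Y-Y)'$. You instead use the open-cover version for the first claim: assuming $Y' = \emptyset$, each $x$ has a neighbourhood $U_x$ with $U_x \cap Y \subseteq B_{1/10}^\C(x)$, and a finite subcover of $\widehat{A}$ exhibits $Y$ inside a finite union of complex balls -- a clean contradiction that avoids the paper's ``at most one exception'' bookkeeping entirely. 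You then derive $0 \in (Y-Y)'$ by translating a witnessing sequence for some $y \in Y'$, correctly noting that $y \in Y$ since $Y$ is closed (and that $y_n \ne y$ is automatic because $0 \in B_{1/10}^\C(0)$). The paper's approach buys both conclusions from one construction and makes the separated sequence explicit; yours decouples the two claims and makes the role of the ``finite union of complex balls'' hypothesis more transparent, at the cost of invoking compactness twice (once for the cover, once implicitly in extracting the convergent sequence from the definition of $Y'$). Both arguments are sound.
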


\subsection{Density of $\C$, $\Q_5$ and $\Q_{13}$ in $\widehat{A}$}

It is a standard fact that the embedded copies $\jmath_\C(\C)$, $\jmath_{\Q_5}(\Q_5)$ and $\jmath_{\Q_{13}}(\Q_{13})$ of $\C$, $\Q_5$ and $\Q_{13}$ respectively in $\widehat{A}$, are all dense; equivalently, any $x \in \widehat{A}$ can be arbitrarily well approximated by complex characters, or by $5$-adic or $13$-adic characters.

We will only need this fact in the case of $\Q_5$ and $\Q_{13}$, and there we will actually need the following slight strengthening.

\begin{proposition}
  \label{strong-strong-approx}
  Suppose $J \le \Q_5^\times$ is a finite index multiplicative subgroup, and $H$ is some coset of $J$.  Then $\jmath_{\Q_5}(H) = \jmath(0, H, 0)$ is dense in $\widehat{A}$. The same holds symmetrically for $\Q_{13}$.
\end{proposition}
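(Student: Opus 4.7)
The plan is to construct inside $\jmath_{\Q_5}(H)$ translates of a sequence of nested closed subgroups $K_0 \subseteq K_1 \subseteq \dots$ of $\widehat{A}$ whose union is dense in $\widehat{A}$, and then conclude by a compactness argument. The key structural input is this: if $\ell := [\Q_5^\times : J] < \infty$, then $(\Q_5^\times)^\ell \subseteq J$, and a short $p$-adic logarithm calculation gives $(1 + 5\Z_5)^\ell = 1 + 5^{v_5(\ell)+1}\Z_5$, so $J$ contains $1 + 5^M\Z_5$ for $M := v_5(\ell) + 1$. Similarly $v(J) \supseteq \ell\Z$, so $v(J) = d\Z$ for some $d \ge 1$, where $v$ denotes the $5$-adic valuation on $\Q_5^\times$.

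Fix $h_0 \in H$ so that $H = h_0 J$, and for each $n \ge 0$ pick $h_n \in H$ with $v(h_n) = v(h_0) - dn$; such $h_n$ exists since $v(J) = d\Z$. Then $h_n(1 + 5^M\Z_5) \subseteq h_n J = H$, which additively reads $h_n + L_n \subseteq H$ where $L_n := h_n \cdot 5^M\Z_5 = 5^{v(h_n)+M}\Z_5$ is a compact open subgroup of $\Q_5$. The $L_n$ are nested with $\bigcup_n L_n = \Q_5$, so setting $K_n := \jmath_{\Q_5}(L_n)$ yields nested compact subgroups of $\widehat{A}$ with $\bigcup_n K_n = \jmath_{\Q_5}(\Q_5)$, dense in $\widehat{A}$ by the density statement recalled immediately before the proposition.

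For any open symmetric neighborhood $U$ of $0$ in $\widehat{A}$, the nested open sets $\{K_n + U\}_n$ cover $\widehat{A}$ (by density of $\bigcup_n K_n$), so by compactness of $\widehat{A}$ there is some $N$ with $K_N + U = \widehat{A}$. The translate $\jmath_{\Q_5}(h_N) + K_N \subseteq \jmath_{\Q_5}(H)$ then meets every translate of $U$, and letting $U$ shrink through a neighborhood basis of $0$ gives $\overline{\jmath_{\Q_5}(H)} = \widehat{A}$. The $\Q_{13}$ case follows by the same argument with the roles of $5$ and $13$ swapped. The main potential obstacle is the $p$-adic structural input bounding $J$ below; once this is in place, the construction of the $K_n$ and the final compactness argument are direct.
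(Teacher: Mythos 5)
Your proof is correct, but it takes a genuinely different route from the paper's. The paper argues pointwise: given a target $\jmath(z,a,b)$, it first uses Strong Approximation (Proposition \ref{strong-approx}, with $5$ and $13$ swapped) to find $q \in A$ matching $z$ in $\C$ and $b$ in $\Q_{13}$, so that the target is within $\delta$ of $\jmath_{\Q_5}(a - \imath_{\Q_5}(q))$; if $a - \imath_{\Q_5}(q)$ lands in the wrong coset of $J$, it perturbs $q$ to $q-r$ for an $r \in A$ that is $5$-adically enormous but tiny in $\C$ and $\Q_{13}$ (Lemma \ref{small-coset-element-finder}), the openness of cosets (Corollary \ref{cosets-are-open}) then forcing $a - \imath_{\Q_5}(q - r) \in H$. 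You instead observe that $H$ additively contains translates $h_n + 5^{v(h_n)+M}\Z_5$ of arbitrarily large compact open subgroups of $\Q_5$, whose images under $\jmath_{\Q_5}$ exhaust the dense subgroup $\jmath_{\Q_5}(\Q_5)$, and you upgrade ``the union is dense'' to ``a single one is $\delta$-dense'' by a nested-open-cover compactness argument. Both proofs rest on the same two inputs: the structure of finite-index subgroups of $\Q_5^\times$ (your computation $(\Q_5^\times)^\ell \supseteq (1+5\Z_5)^\ell = 1 + 5^{v_5(\ell)+1}\Z_5$, together with $v(J) = d\Z$, is a quantitative form of the paper's Proposition \ref{finite-index-mult-subgroup-lemma}), and the density of $\jmath_{\Q_5}(\Q_5)$ in $\widehat{A}$, which you quote as the ``standard fact'' stated just before the proposition and which the paper in effect establishes inside its own proof via Proposition \ref{strong-approx}. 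Your version buys a cleaner, more structural argument with no $\eta$, $\mu$, $\nu$ bookkeeping and no need for Lemma \ref{small-coset-element-finder}; the only thing to add for a fully self-contained write-up is the one-line derivation of the density of $\jmath_{\Q_5}(\Q_5)$ from Proposition \ref{strong-approx} (approximate $(z,b)$ by $q\in A$ and subtract $\imath^{\Delta}(q)$), since the paper asserts that fact without proof.
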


The proof is, as usual, in Appendix \ref{appA}, and on this occasion is slightly lengthy.  We mention one key step, namely the following related result about simultaneously approximating elements of $\C$, $\Q_5$ and $\Q_{13}$ by elements of $A$.

\begin{proposition}
  \label{strong-approx}
  Let $z \in \C$ and $b \in \Q_5$ be given.  For any $\delta > 0$, we can find an element $q \in A$ such that $|\imath_\C(q) - z|_\C,\, |\imath_{\Q_5}(q) - b|_5 \le \delta$.

  The same holds swapping $5$ and $13$.
\end{proposition}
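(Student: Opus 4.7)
The plan is to exploit the ultrametric property of $|\cdot|_5$ to decouple the two approximations, writing $q = q_5 + q'$ where $q_5 \in A$ handles the $\Q_5$-approximation to $b$ and $q' \in A$ corrects the complex part while being $5$-adically small enough ($|\imath_{\Q_5}(q')|_5 \le \delta$) not to spoil it. The ultrametric then automatically gives $|\imath_{\Q_5}(q_5 + q') - b|_5 \le \delta$.

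For $q_5$, the standard fact that $\imath_{\Q_5}(A)$ is dense in $\Q_5$ suffices: $\Z \subseteq A$ maps densely into $\Z_5$ (for example via the $\bar{\PP_5}$-adic expansion of any element of $\Z_5$, whose finite truncations lie in $\Z[\bar{\PP_5}] \subseteq \Z[i] \subseteq A$) and $1/\bar{\PP_5} \in A$ picks up the rest of $\Q_5$, so we pick $q_5$ with $|\imath_{\Q_5}(q_5) - b|_5 \le \delta$.

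The main step is then to produce $q'$. Setting $w := z - \imath_\C(q_5) \in \C$, I look for $q'$ of the form $n\, \bar{\PP_{13}}^{-s}$ with $n \in 5^k \Z[i]$, for some $k, s \ge 0$. Fix $k$ large enough that $5^{-k} \le \delta$. Since $\bar{\PP_{13}} = 2 - 3i$ is coprime to $5$ in $\Z[i]$ we have $|\bar{\PP_{13}}|_5 = 1$, so $|\imath_{\Q_5}(q')|_5 = |n|_5 \le 5^{-k} \le \delta$ automatically. On the complex side, $\imath_\C(5^k \bar{\PP_{13}}^{-s} \Z[i])$ is a square sublattice of $\C$ with covering radius of order $5^k \cdot 13^{-s/2}$, which tends to $0$ as $s \to \infty$; so for $s$ sufficiently large we can choose $n$ with $|\imath_\C(q') - w|_\C \le \delta$.

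Putting this together, $q := q_5 + q'$ satisfies $|\imath_\C(q) - z|_\C = |\imath_\C(q') - w|_\C \le \delta$, and $|\imath_{\Q_5}(q) - b|_5 \le \max(|\imath_{\Q_5}(q')|_5,\, |\imath_{\Q_5}(q_5) - b|_5) \le \delta$. The symmetric statement (swapping $5$ and $13$) follows by running the same argument with $\bar{\PP_5}^{-s}$ in place of $\bar{\PP_{13}}^{-s}$. The only conceptually nontrivial point -- and really the content of the lemma -- is recognizing that $\bar{\PP_{13}}^{-s}$ is simultaneously a unit in $\Q_5$ and arbitrarily small in $\C$: this is exactly the lever that lets us shrink complex magnitudes without changing the $5$-adic size, thereby decoupling the two approximations.
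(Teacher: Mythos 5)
Your argument is correct, but it is genuinely different from what the paper does: the paper gives no direct proof at all, simply observing that the statement is a special case of the Strong Approximation Theorem and citing Cassels (Chapter 10, Theorem 4.1). Your proof is a self-contained, hands-on verification for this particular ring $A$: you first approximate $b$ using density of $\imath_{\Q_5}(A)$ in $\Q_5$ (which indeed follows from density of $\Z$ in $\Z_5$ together with the fact that $\bar{\PP_5}^{-1} \in A$ is a uniformizer inverse), and then correct the complex coordinate using elements $n\,\bar{\PP_{13}}^{-s}$ with $n \in 5^k\Z[i]$, which are $5$-adically small (since $\bar{\PP_{13}}$ is a $5$-adic unit and $5^k\Z[i]$ lands in $5^k\Z_5$) while $\imath_\C(5^k\bar{\PP_{13}}^{-s}\Z[i])$ is a square lattice of covering radius $O(5^k 13^{-s/2}) \to 0$; the ultrametric inequality then decouples the two constraints exactly as you say. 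This is precisely the mechanism underlying strong approximation in this setting, so your proof is both valid and arguably more illuminating here; what the citation buys the paper is uniformity, since the same reference also covers the three-place version (Proposition \ref{strong-approx-7}, approximating in $\C$, $\Q_5$ and $\Q_{13}$ simultaneously from $A[1/7]$), which your two-place decoupling trick would need a further iteration to reach. The only cosmetic wrinkle is your parenthetical about ``$\bar{\PP_5}$-adic expansions'' lying in $\Z[\bar{\PP_5}] \subseteq \Z[i]$ while claiming density of $\Z$ in $\Z_5$; the latter is standard and all you need, so nothing is lost, and in the symmetric case one should of course also replace $5^k$ by $13^k$.
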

\begin{proof}
  This is a special case of what is sometimes called the Strong Approximation Theorem (which is no more than the natural generalization of this statement); see \cite{cassels}[Chapter 10, Theorem 4.1].
\end{proof}

\subsection{Density of periodic points of $\widehat{A}$}

In the \tttt{} Theorem, we noted that for all $\delta > 0$ there exists an $m$ and a finite, $\delta$-dense subset of $\T$ consisting of points fixed by $\Phi^{(m)}$.  We now turn to the (completely unaltered) statement for $\widehat{A}$.

\begin{proposition}
  \label{many-periodic-points}
  For all $\delta > 0$ there exists an $m \in \Z_{>0}$ and a finite set $S \subseteq \widehat{A}$ which is pointwise fixed by $\Theta^{(m)}$, such that $S$ is $\delta$-dense in $\widehat{A}$.
\end{proposition}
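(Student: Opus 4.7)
The plan is to mimic the (essentially trivial) corresponding statement for $\T$: exhibit a dense family of periodic points in $\widehat{A}$, extract a finite $(\delta/2)$-dense subfamily by compactness, and arrange a common period by taking an $\operatorname{lcm}$. Proposition~\ref{periodic-points}\ref{enum:periodic-equiv} already identifies the periodic points explicitly: they are precisely the image $\jmath(\imath^{\Delta}(A'))$, where
\[ A' = \{\, q \in \Q(i) \,:\, |q|_{\PP_5} \le 1,\ |q|_{\PP_{13}} \le 1 \,\} \subseteq \Q(i). \]
So the task reduces to showing that this image is dense in $\widehat{A}$, after which extracting a finite subset of common period is routine.

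The substantive step is this density claim. Since $\widehat{A}$ is a continuous quotient of $\C \times \Q_5 \times \Q_{13}$ by $\imath^{\Delta}(A)$ (Proposition~\ref{solenoid-general-facts}), it suffices to show that $\imath^{\Delta}(A')$ is dense in this product. This is a weak-approximation statement for $\Q(i)$ at the five places $\{\infty,\, \bar{\PP_5},\, \bar{\PP_{13}},\, \PP_5,\, \PP_{13}\}$: given any target $(z, a, b) \in \C \times \Q_5 \times \Q_{13}$ and any $\delta > 0$, one finds $q \in \Q(i)$ simultaneously $\delta$-close to $(z, a, b)$ at the first three places and of absolute value strictly less than $1$ at $\PP_5$ and $\PP_{13}$; the last condition forces $q \in A'$. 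This is a direct consequence of the general approximation theorem cited in the proof of Proposition~\ref{strong-approx}.

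For the bookkeeping: fix a finite $(\delta/2)$-net $\{x_1, \dots, x_k\}$ of $\widehat{A}$ using compactness, and choose periodic $y_i$ with $d_{\widehat{A}}(x_i, y_i) < \delta/2$. The set $S := \{y_1, \dots, y_k\}$ is then $\delta$-dense by the triangle inequality. Each $y_i$ is fixed by some $\Theta^{(m_i)}$, in particular by $\theta_5^{m_i}$ and $\theta_{13}^{m_i}$; with $m = \operatorname{lcm}(m_1, \dots, m_k)$, both $\theta_5^m$ and $\theta_{13}^m$ fix every $y_i$, so $\Theta^{(m)}$ fixes $S$ pointwise as required. The main (and essentially only) obstacle in the argument is the density claim in the second paragraph: one needs weak approximation at three places simultaneously, rather than at the two places handled directly by Proposition~\ref{strong-approx}. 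Once that is secured the remainder is immediate.
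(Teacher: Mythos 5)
Your argument is correct, and the overall strategy (a dense family of periodic points, compactness to extract a finite $\delta$-dense subfamily, then a common period) is the same as the paper's; the difference lies in how the dense family is produced. The paper works with the specific nested finite sets $S_n = \jmath(\imath^{\Delta}(7^{-n}A)) \cong A/7^nA$: each is a single finite set permuted by $\Theta$, so all its points are simultaneously fixed by one $\Theta^{(m)}$ with no need for an $\operatorname{lcm}$, and density of $\bigcup_n S_n$ is exactly Proposition~\ref{strong-approx-7}. You instead take the \emph{entire} periodic locus, identified via Proposition~\ref{periodic-points}\ref{enum:periodic-equiv} as $\jmath(\imath^{\Delta}(A'))$ with $A' = \{q : |q|_{\PP_5}, |q|_{\PP_{13}} \le 1\}$, and prove its density by weak approximation at the five places $\infty, \bar{\PP_5}, \bar{\PP_{13}}, \PP_5, \PP_{13}$. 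That five-place approximation is valid, but note that the step you single out as the main obstacle is already covered by the paper's stated auxiliary results: $A[1/7] = \Z[i][1/\bar{\PP_5}, 1/\bar{\PP_{13}}, 1/7] \subseteq A'$, since inverting $7$ (coprime to $5$ and $13$) cannot increase $|\cdot|_{\PP_5}$ or $|\cdot|_{\PP_{13}}$ above $1$; hence Proposition~\ref{strong-approx-7} alone gives density of $\imath^{\Delta}(A')$ in $\C \times \Q_5 \times \Q_{13}$, with no fresh appeal to the approximation theorem. Your route costs a dependence on the classification of periodic points (which the paper's proof avoids, getting periodicity of $S_n$ for free from the permutation action on a finite set), but is otherwise a clean and complete alternative.
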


We need the following result, very similar in flavour to Proposition \ref{strong-approx} above.  The number $7$ is essentially arbitrary here.
\begin{proposition}
  \label{strong-approx-7}
  Let $z \in \C$, $a \in \Q_5$ and $b \in \Q_{13}$ be given.  For any $\delta > 0$ we can find $q \in A[1/7]$ (say) such that $|\imath_\C(q) - z|_\C,\, |\imath_{\Q_5}(q) - a|_5,\, |\imath_{\Q_{13}}(q) - b|_{13} \le \delta$.  Equivalently, $\imath^{\Delta}(A[1/7])$ is dense in $\C \times \Q_5 \times \Q_{13}$, or if you prefer $\jmath(\imath^{\Delta}(A[1/7]))$ is dense in $\widehat{A}$.
\end{proposition}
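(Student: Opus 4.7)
The plan is to invoke the Strong Approximation Theorem (as used in Proposition \ref{strong-approx}) with an enlarged set of places that includes the place above $7$. Since $7 \equiv 3 \pmod 4$, the rational prime $7$ is inert in $\Z[i]$; hence there is a unique non-Archimedean place $v_7$ of $\Q(i)$ above $7$ (with completion a degree-$2$ unramified extension of $\Q_7$), and the remaining finite places of $\Q(i)$ are exactly those above rational primes other than $5$, $13$, and $7$.

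Consequently, if we take $S = \{v_\C,\, v_{\bar{\PP_5}},\, v_{\bar{\PP_{13}}},\, v_7\}$, the ring of $S$-integers
\[
  \OO_S = \{q \in \Q(i) \,:\, |q|_v \le 1 \text{ for all places } v \notin S\}
\]
is exactly $A[1/7]$. By the Strong Approximation Theorem (\cite{cassels}[Chapter 10, Theorem 4.1]), after omitting any one place from $S$, the diagonal image of $\OO_S$ is dense in the product of the completions at the remaining places. Omitting $v_7$ gives that $\imath^{\Delta}(A[1/7])$ is dense in $\C \times \Q_5 \times \Q_{13}$, which is the first form of the conclusion.

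The ``equivalently'' clause follows from Proposition \ref{solenoid-general-facts}: since $\jmath$ is a continuous surjection from $\C \times \Q_5 \times \Q_{13}$ onto $\widehat{A}$ with kernel $\imath^{\Delta}(A) \subseteq \imath^{\Delta}(A[1/7])$, density of $\imath^{\Delta}(A[1/7])$ in the source transfers to density of its image $\jmath(\imath^{\Delta}(A[1/7]))$ in $\widehat{A}$. There is no real obstacle here; the only content is the correct identification of $\OO_S$ with $A[1/7]$, which is a formal consequence of the inertness of $7$ in $\Z[i]$. Any rational prime inert in (or even split in) $\Z[i]$ would serve equally well in place of $7$; the choice is purely auxiliary.
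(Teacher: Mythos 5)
Your proof is correct and takes essentially the same route as the paper, which disposes of this proposition in one line as a direct special case of the Strong Approximation Theorem (citing the same reference in Cassels); you have merely spelled out the details the paper leaves implicit, namely the identification of $A[1/7]$ with the ring of $S$-integers for $S = \{v_\C, v_{\bar{\PP_5}}, v_{\bar{\PP_{13}}}, v_7\}$ and the role of the omitted place above $7$. These details are accurate (the only tiny slip is the parenthetical claim that the places outside $S$ all lie above primes other than $5$, $13$, $7$ --- the conjugate places $v_{\PP_5}$, $v_{\PP_{13}}$ also remain --- but this does not affect the identification or the argument).
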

\begin{proof}
  This is also a special case of Strong Approximation; again see \cite{cassels}[Chapter 10, Theorem 4.1].
\end{proof}

\begin{proof}[Proof of Proposition \ref{many-periodic-points}]
  We consider the sets $R_n = 7^{-n} A \subseteq \Q(i)$, and their images $S_n = \jmath\left(\imath^{\Delta}(R_n)\right) \subseteq \widehat{A}$ under the diagonal map.  We note $S_n \cong A / 7^n A$ and so is finite.  By Proposition \ref{strong-approx-7}, we have that $\bigcup_n S_n$ is dense in $\widehat{A}$.

  The fact that some $S_n$ is $\delta$-dense in $\widehat{A}$ follows from this and a routine compactness argument.  Since the action of $\Theta$ permutes the finite set $S_n$, we may choose $m$ such that $\theta_5^m$ and $\theta_{13}^m$ fix $S_n$ pointwise, as required.
\end{proof}

\section{Proof of the rationality lemma}
\label{sec-rationality}

\subsection{The growth estimate, i.e.~Lemma \ref{furstenberg-density}}

Most of the hard work in running the proof of the \tttt{} Theorem is in proving the analogue of Lemma \ref{furstenberg-density}, which was used to show Corollary \ref{furstenberg-zero-limit} which in turn was crucial for the inductive step of the main argument.

This proof is still highly technical.  However, as we regard it as the core of the argument, we will place it here rather than in an appendix.

In fact we split into two results, the former corresponding to the assumption ``$0$ is a limit point of $Y$'', the latter to ``$Y$ has a rational limit point''.

\begin{lemma}
  \label{zero-limit-lemma}
  Suppose $Y \subseteq \widehat{A}$ is a closed, $\Theta$-invariant set with $0$ as a non-Archimedean limit point.  Then $Y = \widehat{A}$.

  The same holds replacing $\Theta$ by $\Theta^{(m)}$ for any $m > 0$.
\end{lemma}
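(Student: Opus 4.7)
The plan is to follow the analogue of Corollary \ref{furstenberg-zero-limit}: find an explicit ``small'' element of $Y$ with nontrivial non-Archimedean content (guaranteed by the non-Archimedean limit hypothesis), then use $\Theta$-invariance to inflate its orbit to a dense subset of $\widehat A$. First I would pick $y_n \in Y$ with $y_n \to 0$ and $y_n \notin B_{1/10}^\C(0)$, and lift (using Proposition \ref{solenoid-general-facts}) to representatives $(z_n, a_n, b_n) \in \C \times \Q_5 \times \Q_{13}$ with $|z_n|_\C + |a_n|_5 + |b_n|_{13} \to 0$. If $(a_n, b_n) = (0,0)$ for infinitely many $n$ then $y_n = -\jmath_\C(z_n) \in B_{1/10}^\C(0)$ eventually, contradicting the hypothesis; so after passing to a subsequence, and swapping the roles of $5$ and $13$ by symmetry if needed, I may assume $a_n \ne 0$ for all $n$, so $v_5(a_n) \to +\infty$.

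Next, for each $v \in \Z$ and $n$ sufficiently large, I would apply $\theta_5^{v_5(a_n) - v} \in \Theta$ to $y_n$. By Proposition \ref{solenoid-general-facts}, this rescales the $\Q_5$-component to valuation exactly $v$ while acting by unit norm on the $\C$- and $\Q_{13}$-components (as $|\imath_\C(\theta_5)|_\C = |\imath_{\Q_{13}}(\theta_5)|_{13} = 1$), so those components still tend to $0$. Writing $\alpha := 5\,\imath_{\Q_5}(\theta_5) \in \Z_5^\times$ and $\tilde a_n := a_n \cdot 5^{-v_5(a_n)} \in \Z_5^\times$, the $\Q_5$-part becomes $\alpha^{v_5(a_n) - v}\, \tilde a_n \cdot 5^v$. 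By compactness of $\Z_5^\times$, I extract a single subsequence on which both $\tilde a_n$ and $\alpha^{v_5(a_n)}$ converge; passing to the limit in $\widehat A$ and using closedness of $Y$ then yields $\jmath_{\Q_5}(\mu\, \alpha^{-v} 5^v) \in Y$ for every $v \in \Z$, where $\mu \in \Z_5^\times$ is a single unit determined by the limits and independent of $v$. Now applying $\theta_{13}^s$ ($s \ge 0$) to each of these points and closing up gives $\jmath_{\Q_5}(J \cdot \mu\, \alpha^{-v} 5^v) \subseteq Y$, where $J := \overline{\langle \imath_{\Q_5}(\theta_{13})\rangle}$ is an open (finite-index) subgroup of $\Z_5^\times$: indeed $\theta_{13} = (2+3i)/(2-3i)$ is not a fourth root of unity in $\Q(i)$, so $\imath_{\Q_5}(\theta_{13})$ is non-torsion in $\Z_5^\times \cong \mu_4 \times (1+5\Z_5)$, and any closed subgroup of $\Z_5^\times$ containing a non-torsion element is open.

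Taking the union over $v \in \Z$ yields $\jmath_{\Q_5}(\mu G) \subseteq Y$, where $G := J \cdot \langle 5\alpha^{-1}\rangle$ is a subgroup of $\Q_5^\times$ with $G \cap \Z_5^\times = J$, hence of finite index $[\Z_5^\times : J]$ in $\Q_5^\times$. Proposition \ref{strong-strong-approx} applied to the coset $\mu G$ then gives that $\jmath_{\Q_5}(\mu G)$ is dense in $\widehat A$, and closedness of $Y$ forces $Y = \widehat A$. The $\Theta^{(m)}$-variant is handled by exactly the same argument with $\theta_5^m$ and $\theta_{13}^m$ in place of $\theta_5$ and $\theta_{13}$ (the latter is still non-torsion in $\Z_5^\times$); one additionally restricts $v$ to a single residue class modulo $m$ (using a further subsequence), which still produces a coset of a finite-index subgroup of $\Q_5^\times$ at the end.

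The main obstacle will be the second paragraph: coordinating the three-component manipulations — inflating the $\Q_5$-component arbitrarily via $\theta_5$ while keeping the $\C$- and $\Q_{13}$-components tending to $0$, and tracking unit parts through limits in the profinite group $\Z_5^\times$ — so as to end up with an explicit coset of a finite-index subgroup of $\Q_5^\times$ sitting inside $Y$ via $\jmath_{\Q_5}$. The only substantive arithmetic input is the non-torsion of $\theta_{13}$ in $\Q_5$; after that, Proposition \ref{strong-strong-approx} turns a measure-zero-looking subset of $\widehat A$ into something dense.
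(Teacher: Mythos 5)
Your proposal is correct, and its overall strategy coincides with the paper's: exhibit inside $Y$ (via $\jmath_{\Q_5}$) a full coset of a finite-index subgroup of $\Q_5^\times$, with the finite index coming from the non-torsion of $\imath_{\Q_5}(\theta_{13})$ in $\Z_5^\times$ (Proposition \ref{topological-generation}), and then conclude by Proposition \ref{strong-strong-approx}. The execution differs in a way worth noting. The paper works entirely at the finite stage: it fixes a target $\alpha$ in a coset $H$ of $J = \langle J_0, \theta_5^m\rangle$ containing the $a_n$, and approximates $\jmath_{\Q_5}(\alpha)$ directly by points $\theta_{13}^{mr}\theta_5^{ms}(x_n)$, which forces it to control all three components simultaneously -- in particular to bound the exponent $r$ of $\theta_{13}$ uniformly in $n$ (via a compactness/equidistribution argument in $J_0$) so that the $\Q_{13}$ term $|\theta_{13}^{mr} b_n|_{13}$ stays small. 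You instead interpose an extra limit: using compactness of $\Z_5^\times$ to make $\tilde a_n$ and $\alpha^{v_5(a_n)}$ converge, you first place the exact, purely $5$-adic points $\jmath_{\Q_5}(\mu\alpha^{-v}5^v)$ in $Y$, and only then act by powers of $\theta_{13}$ -- on points whose $\C$- and $\Q_{13}$-components are identically zero, so no bound on the $\theta_{13}$-exponent is needed. This buys a genuine bookkeeping simplification (the $\delta/3$-splitting and the $13^{-Nm}$ estimate disappear) at the cost of an additional subsequence extraction; your handling of the $\Theta^{(m)}$ case by restricting $v$ to a residue class mod $m$ is also sound, since the resulting set is still a coset of a finite-index subgroup of $\Q_5^\times$.
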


\begin{lemma}
  \label{torsion-limit-lemma}
  Let $Y \subseteq \widehat{A}$ be a closed, $\Theta$-invariant set, and $y = \jmath_\C(w) + t \in \widehat{A}$, where $w \in \C$ and $t$ is a torsion point.  Suppose $y$ is a non-Archimedean limit point of $Y$.  Then the image $Y(1)$ of $Y$ under the evaluation map, is all of $\T$.

  Again, the same holds replacing $\Theta$ by $\Theta^{(m)}$.
\end{lemma}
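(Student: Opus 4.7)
The plan is to mimic the deduction of the rational-limit-point case of Corollary \ref{furstenberg-zero-limit} from Lemma \ref{furstenberg-density}: translate the torsion part $t$ of $y$ to the origin, apply Lemma \ref{zero-limit-lemma}, and conclude. The complication, without direct analogue in the \tttt{} setting, is that the complex component $\jmath_\C(w)$ cannot be eliminated by translation without breaking $\Theta$-invariance. We therefore fold the complex circle into the auxiliary set before invoking Lemma \ref{zero-limit-lemma}, and afterwards use an invariance-and-density argument on the unit circle $S^1 \subseteq \C$ to recover $Y(1) = \T$.

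Fix a sequence $y_n \in Y$ with $y_n \to y$ non-Archimedeanly. By Corollary \ref{torsion-nearly-implies-periodic} there exist $\theta_0 \in \Theta$ and an integer $m \ge 1$ such that $\theta_0 t$ is fixed by $\Theta^{(m)}$. Replacing $y$, $y_n$, $w$, $t$ by $\theta_0 y$, $\theta_0 y_n$, $\imath_\C(\theta_0) w$, $\theta_0 t$ respectively, and passing to a subsequence (a short argument using continuity of $\theta_0$ and the discreteness of $\imath^{\Delta}(A)$ in $\C \times \Q_5 \times \Q_{13}$), we may assume $t$ itself is fixed by $\Theta^{(m)}$. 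Then $\tilde{Y} := Y - t$ is closed and $\Theta^{(m)}$-invariant, $\jmath_\C(w)$ is a non-Archimedean limit point of $\tilde{Y}$ (with approximating sequence $y_n - t$), and proving $\tilde{Y}(1) = \T$ is equivalent to the desired $Y(1) = \T$.

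Next, form
\[
  Z := \tilde{Y} - \jmath_\C(S^1 \cdot w) = \bigcup_{\zeta \in S^1} \bigl(\tilde{Y} - \jmath_\C(\zeta w)\bigr) \subseteq \widehat{A}.
\]
This set is closed (as the continuous image of the compact product $S^1 \times \tilde{Y}$), $\Theta^{(m)}$-invariant (because $\imath_\C(\theta)$ preserves the circle $S^1 \cdot w$), and contains the non-Archimedean null sequence $v_n := (y_n - t) - \jmath_\C(w)$. Lemma \ref{zero-limit-lemma} then yields $Z = \widehat{A}$: for every $x \in \widehat{A}$ there is some $\zeta \in S^1$ with $x + \jmath_\C(\zeta w) \in \tilde{Y}$.

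To promote this to $\tilde{Y}(1) = \T$, let $x_0 \in \widehat{A}$ be any point fixed by $\Theta^{(m)}$, and set $S(x_0) := \{\zeta \in S^1 : x_0 + \jmath_\C(\zeta w) \in \tilde{Y}\}$. Then $S(x_0)$ is nonempty (by $Z = \widehat{A}$), closed (by continuity), and satisfies $\imath_\C(\Theta^{(m)}) \cdot S(x_0) \subseteq S(x_0)$ (by $\Theta^{(m)}$-invariance of $\tilde{Y}$ combined with $\theta x_0 = x_0$). Since $\imath_\C(\Theta^{(m)})$ is dense in $S^1$ (as $\arg(\imath_\C(\theta_5))/\pi$ and $\arg(\imath_\C(\theta_{13}))/\pi$ are irrational), closedness forces $S(x_0) = S^1$, so the arc $x_0(1) + [-|w|, |w|]$ lies in $\tilde{Y}(1)$ for every such $x_0$. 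Enlarging $m$ via Proposition \ref{many-periodic-points} so that $\Theta^{(m)}$ simultaneously fixes $t$ and admits a $\delta$-dense set of periodic points for some $\delta \le |w|$ (or, in the degenerate case $|w| = 0$, applying Lemma \ref{zero-limit-lemma} directly to $\tilde{Y}$ to obtain $\tilde{Y} = \widehat{A}$), the union of these arcs covers $\T$ and we conclude $\tilde{Y}(1) = \T$. The main technical subtlety, I expect, lies in the verification (in the initial reduction) that $\theta_0 y$ remains a non-Archimedean limit point of $Y$.
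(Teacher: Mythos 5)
Your argument is correct, and its first half essentially coincides with the paper's: after your reduction making $t$ periodic, your set $Z = \tilde{Y} - \jmath_\C(S^1 \cdot w)$ is (up to the finite orbit of $t$) the set $Y - V$, with $V$ the orbit closure of $y$, to which the paper applies Lemma \ref{zero-limit-lemma} via Corollary \ref{zero-limit-cor}. The two proofs diverge in how they convert $Z = \widehat{A}$ into $Y(1) = \T$. The paper solves $z - v = \jmath_\C(100 + |w|)$ to produce a \emph{single} point $z \in Y$ of the form (complex part of modulus $\ge 1/2$) plus torsion, whose orbit closure contains one large circle whose real parts already sweep out all of $\T$ (Lemma \ref{big-circles}). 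You instead keep the circles at their natural radius $|w|$, place one at every $\Theta^{(m)}$-fixed point, and cover $\T$ by the union of the resulting short arcs, invoking the density of periodic points (Proposition \ref{many-periodic-points}) with $\delta \le |w|$. Both finishes rest on the same irrational-rotation density on circles; the paper's is lighter (no appeal to Proposition \ref{many-periodic-points}, no case split at $|w| = 0$), while yours avoids the slightly ad hoc translation by $\jmath_\C(100 + |w|)$ and is arguably more robust. Finally, the subtlety you flag --- that $\theta_0 y_n \to \theta_0 y$ remains a non-Archimedean limit --- does hold and is routine: writing $y_n - y = \jmath(z_n, a_n, b_n)$ with $(z_n, a_n, b_n) \to 0$, discreteness of $\imath^{\Delta}(A)$ shows that for large $n$ the condition $y_n - y \notin B_{1/10}^\C(0)$ is equivalent to $(a_n, b_n) \ne (0,0)$, and this is preserved under multiplication by $\theta_0$ since $\imath_{\Q_5}(\theta_0)$ and $\imath_{\Q_{13}}(\theta_0)$ are nonzero.
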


\begin{remark}
  As mentioned above, we know of no reason why the weaker conclusion of Lemma \ref{torsion-limit-lemma} could not in principle be strengthened to $Y = \widehat{A}$, and this is the sole cause of the correspondingly weaker conclusion to Lemma \ref{infinitary-rationality-lemma}.
  
  A similarly weak conclusion to Lemma \ref{zero-limit-lemma} would \emph{not} suffice for the argument.
\end{remark}

We need one more standard fact, the proof of which is again in Appending \ref{appA}.
\begin{proposition}
  \label{topological-generation}
  Suppose $p > 2$ and $u \in \Z_p^\times$ is not a root of unity.  Then $J_0 = \bar{\{u^r :\, r \ge 0\}}$ is a finite index subgroup of $\Z_p^\times$.
\end{proposition}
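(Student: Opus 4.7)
The plan is to exploit the classical structure of $\Z_p^\times$ for $p > 2$, reducing the problem to exhibiting a single nontrivial element of $J_0$ lying in $1 + p\Z_p$.

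First I would verify that $J_0$ is not merely a closed subsemigroup, but actually a closed subgroup of $\Z_p^\times$. By compactness, the sequence $(u^n)_{n \ge 0}$ has a convergent subsequence $u^{n_1}, u^{n_2}, \ldots$ with $n_1 < n_2 < \cdots$. Then $u^{n_{k+1} - n_k} \to 1$, and hence $u^{n_{k+1} - n_k - 1} \to u^{-1}$; since $n_{k+1} - n_k - 1 \ge 0$, this shows $u^{-1} \in J_0$. So $J_0$ contains every integer power of $u$, and being closed it is the closure of the subgroup $\langle u \rangle$, which is itself a closed subgroup of $\Z_p^\times$.

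Next, I would invoke the standard decomposition $\Z_p^\times \cong \mu_{p-1} \times (1 + p\Z_p)$ for $p > 2$, where $\mu_{p-1}$ denotes the (Teichm\"uller) group of $(p-1)$-th roots of unity in $\Z_p$. The $p$-adic logarithm gives a further topological isomorphism $1 + p\Z_p \cong (\Z_p, +)$, and the closed subgroups of $(\Z_p, +)$ are exactly $\{0\}$ and the $p^k \Z_p$ for $k \ge 0$, each of the latter having finite index. Combined with $[\Z_p^\times : 1 + p\Z_p] = p - 1$, this yields: any closed subgroup $K$ of $\Z_p^\times$ for which $K \cap (1 + p\Z_p)$ is nontrivial already has finite index in $\Z_p^\times$, because $K \cap (1 + p\Z_p)$ has finite index in $1 + p\Z_p$, hence in $\Z_p^\times$, and $K$ contains it.

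It therefore suffices to produce a nontrivial element of $J_0 \cap (1 + p\Z_p)$, and the obvious candidate is $u^{p-1}$. Raising to the $(p-1)$-th power kills the $\mu_{p-1}$ factor, so $u^{p-1} \in 1 + p\Z_p$; and $u^{p-1} \ne 1$, because otherwise $u$ would itself be a $(p-1)$-th root of unity, contradicting the hypothesis. I do not anticipate a serious obstacle: the whole argument rests on the standard structure theorem for $\Z_p^\times$ and on $1 + p\Z_p$ being procyclic for $p > 2$, both of which justify the hypothesis $p > 2$ (for $p = 2$, the analogue fails since $1 + 2\Z_2$ has torsion). The only mildly nonroutine step is the initial verification that $J_0$ is a subgroup, and even this is a standard compactness argument.
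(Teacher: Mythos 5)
Your proof is correct and follows essentially the same route as the paper: decompose $\Z_p^\times$ into its $(p-1)$-torsion part and $1+p\Z_p$, pass to the $(p-1)$-th power of $u$ to land in $1+p\Z_p$, and use the $p$-adic $\log$/$\exp$ isomorphism to identify the resulting closed subgroup with some $p^k\Z_p$. The only difference is that you spell out the compactness argument showing $J_0$ is a genuine subgroup rather than just a closed subsemigroup, a point the paper leaves implicit.
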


Before embarking on the proof of Lemma \ref{zero-limit-lemma}, we introduce an abuse of notation: for the sake of visual clarity, where it is unambiguous to do so, we suppress the explicit use of $\imath_{\C}$, $\imath_{\Q_5}$ and $\imath_{\Q_{13}}$, implicitly treating $\Q(i)$ as a subset of $\C, \Q_5, \Q_{13}$ respectively.

\begin{proof}[Proof of Lemma \ref{zero-limit-lemma}]
  If $0 \in Y'$, we can choose a sequence $x_n$ in $Y$ such that $x_n \rightarrow 0$ is a non-Archimedean limit.  We may identify $x_n$ with points $(z_n, a_n, b_n)$ of the fundamental domain $[-1/2, -1/2)^2 \times \Z_5 \times \Z_{13}$ of $\widehat{A}$.  Then, $z_n, a_n, b_n$ converge individually to $0$.

  Since the limit is non-Archimedean, we know that $a_n$ and $b_n$ cannot both be eventually zero.  Suppose $a_n$ is not eventually zero -- the other case is symmetric.  Passing to an appropriate subsequence we may assume $a_n \ne 0$ for all $n$.

  Let $J_0$ denote the closure of $\left\{{\theta_{13}}^{m r} :\, r \in \Z_{\ge 0}\right\} \subseteq \Z_5^\times$.  Since $\theta_{13}$ is not a root of unity in $\Z_5^\times$, we have that $J_0$ is a (closed) finite index subgroup of $\Z_5^\times$, by Proposition \ref{topological-generation}. Let $J$ denote the subgroup of $\Q_5^\times$ spanned by $J_0$ and ${\theta_5}^m$; so $J$ has finite index in $\Q_5^\times$ (this can be seen directly).  Finally, pick $H$ to be some coset of $J$ such that $a_n \in H$ infinitely often; passing to an appropriate subsequence, assume $a_n \in H$ for all $n$.  Set $H_0 = H \cap \Z_5^\times$ (a coset of $J_0$).

  By Proposition \ref{strong-strong-approx}, it suffices to show $\jmath(0, H, 0) \subseteq Y$, as $\jmath(0, H, 0)$ is dense in $\widehat{A}$.  So, fix any $\alpha \in H$; we will show that $\jmath_{\Q_5}(\alpha) \in Y$.

  Fix a $\delta > 0$.  Our aim is to find an $n$ and integers $r, s \ge 0$ such that $d_{\widehat{A}}({\theta_{13}}^{m r} {\theta_5}^{m s} (x_n), \jmath(0, \alpha, 0)) \le \delta$. Since ${\theta_{13}}^{m r} {\theta_5}^{m s} (x_n)$ lies in $Y$ (by $\Theta^{(m)}$-invariance) and since $Y$ is closed and $\delta$ is arbitrary, this again suffices.

  The distance $d_{\widehat{A}}({\theta_{13}}^{m r} {\theta_5}^{m s} (x_n), \jmath(0, \alpha, 0))$ is (at most) the sum of the three terms $|{\theta_5}^{m s} {\theta_{13}}^{m r} z_n|_\C$, $|{\theta_5}^{m s} {\theta_{13}}^{m r} a_n - \alpha|_5$  and $|{\theta_5}^{m s} {\theta_{13}}^{m r} b_n|_{13}$.  The hard work is approximating $\alpha$ by ${\theta_5}^{m s} {\theta_{13}}^{m r} a_n$ to make the $\Q_5$ contribution small; but we need to guarantee that the $\C$ and $\Q_{13}$ terms stay small as we do so.  Of course, by choosing $n$ large we can make $z_n$ and $b_n$ as small as we like; but some consideration is needed of the fact that $r$ and $s$ may themselves depend on $n$.
  
  First we control the contribution from the $\C$ term.  This is straightforward: for $n$ sufficiently large, we may insist that $|z_n|_\C \le \delta / 3$, and we observe that $|\theta z_n|_\C = |z_n|_\C$ for any $\theta \in \Theta$.
  
  Now we consider the $\Q_{13}$ term.  Since this becomes large as $r$ increases, we will need some bound on $r$ that is independent of $n$.  Specifically, we choose $N$ such that the set $\{{\theta_{13}}^{m r} :\, 0 \le r \le N \}$ is $|\alpha|_5^{-1}\, \delta / 3$-dense in $J_0$  (again we have invoked compactness, this time of $J_0$, to get the appropriate uniformity in this statement) and commit to only considering $r$ in the range $0 \le r \le N$.
  Now -- again taking $n$ sufficiently large -- we can assume that $|b_n|_{13} \le 13^{-N m}\, \delta / 3$; equivalently, that $|{\theta_{13}}^{m r} b_n|_{13} \le \delta / 3$ for all $0 \le r \le N$.

  Finally, we consider the $\Q_5$ term, which is the heart of the matter.  Again taking $n$ large enough, we may assume $|a_n|_5 \le |\alpha|_5$.  At this stage we fix some particular $n$, large enough in all the ways described above.  It remains only to fix $r$ and $s$.

  For ${\theta_5}^{m s} {\theta_{13}}^{m r} a_n$ to approximate $\alpha$, it should have the same valuation, which is controlled by $s$. So, we are forced to take $s = \left(v_5(a_n) - v_5(\alpha)\right) / m \ge 0$ (recalling $a_n$ is non-zero).  As $a_n$ and $\alpha$ lie in the same coset of $J$ by assumption, we have that $s$ is an integer and $u := \alpha / ({\theta_5}^{m s} a_n) \in J_0$.

  Now we must choose $r$ so that ${\theta_{13}}^{m r}$ approximates $u$.  Indeed, by our choice of $N$ there is an $r$ in the range $0 \le r \le N$ such that $|{\theta_{13}}^{m r} - u|_5 \le |\alpha|_5^{-1}\, \delta / 3$, whence
  \[
    |{\theta_{13}}^{m r} {\theta_5}^{m s} a_n - \alpha|_5 = |{\theta_{13}}^{m r} - u|_5\, |\alpha|_5 \le \delta / 3 \ .
  \]

  Putting everything together, we finally get:
  \begin{align*}
    d_{\widehat{A}}\left({\theta_{13}}^{m r} {\theta_5}^{m s}(x_n),\, \jmath(0, \alpha, 0) \right) & = |z_n|_\C + |{\theta_{13}}^{m r} {\theta_5}^{m s} a_n - \alpha|_5 + |{\theta_{13}}^{m r} {\theta_5}^{m s} b_n|_{13} \\
  &\le \delta / 3 + \delta / 3 + \delta / 3 = \delta
  \end{align*}
  as required.
\end{proof}

We deduce a simple corollary.

\begin{cor}
  \label{zero-limit-cor}
  Suppose $Y$ is closed, $\Theta^{(m)}$-invariant and has a point $x$ as a  non-Archimedean limit point.  Let $V = \bar{\Theta^{(m)} (x)}$ be the orbit closure of $x$.  Then $Y - V = \widehat{A}$.
\end{cor}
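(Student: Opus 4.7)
The plan is to reduce directly to Lemma \ref{zero-limit-lemma} applied to the difference set $Y - V$. So the first step is to verify that $Y - V$ is itself a closed, $\Theta^{(m)}$-invariant subset of $\widehat{A}$. Closedness follows because $Y$ and $V$ are closed subsets of the compact group $\widehat{A}$, so $Y \times V$ is compact, and $Y - V$ is the image of this compact set under the continuous subtraction map. Invariance is a formal computation: $\Theta^{(m)}$ acts by group endomorphisms (additively), so $\theta(y - v) = \theta(y) - \theta(v) \in Y - V$ for any $y \in Y$, $v \in V$, using that both $Y$ and $V$ are $\Theta^{(m)}$-invariant (the latter because it is an orbit closure).

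The second step is to check that $0$ is a non-Archimedean limit point of $Y - V$. Since $1 \in \Theta^{(m)}$ (take $r = s = 0$), we have $x \in \Theta^{(m)}(x) \subseteq V$. By hypothesis, $x$ is a non-Archimedean limit of $Y$, so there is a sequence $y_n \in Y$ with $y_n \to x$ and $y_n - x \notin B_{1/10}^\C(0)$. Then $y_n - x$ lies in $Y - V$, tends to $0$, and stays outside $B_{1/10}^\C(0)$; so $0$ is a non-Archimedean limit point of $Y - V$.

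Finally, apply Lemma \ref{zero-limit-lemma} (in its $\Theta^{(m)}$ formulation) to $Y - V$ to conclude $Y - V = \widehat{A}$. No real obstacles arise here: the entire content is packaged in Lemma \ref{zero-limit-lemma}, and the corollary is just the observation that a non-Archimedean limit point $x$ of $Y$ produces a non-Archimedean $0$ limit point of $Y - V$ by translating back via any point of the orbit of $x$ (in particular $x$ itself).
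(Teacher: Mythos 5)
Your proof is correct and follows exactly the same route as the paper: observe that $Y - V$ is closed and $\Theta^{(m)}$-invariant, note that $y_n - x$ gives $0$ as a non-Archimedean limit point of $Y - V$ (using $x \in V$), and apply Lemma \ref{zero-limit-lemma}. Your write-up is in fact slightly more careful than the paper's one-line version, which even contains a small slip writing $V - Y$ where $Y - V$ is meant.
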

\begin{proof}
  Note $Y - V$ is closed and $\Theta^{(m)}$-invariant.  If $x_n \rightarrow x$ is a non-Archimedean limit in $Y$ then $x_n - x \rightarrow 0$ is a non-Archimedean limit in $V - Y$, so we apply Lemma \ref{zero-limit-lemma} to $V - Y$.
\end{proof}

Clearly as $V$ becomes larger this conclusion becomes weaker.  In the case of Lemma \ref{torsion-limit-lemma}, $V$ is fairly small by virtue of Proposition \ref{complex-by-torsion-lemma} \ref{enum:cbf-iff-cpt}, and the conclusion of the corollary is strong enough.  The following lemma shows why.

\begin{lemma}
  \label{big-circles}
  Suppose $y = \jmath_\C(w) + t \in \widehat{A}$ where $w \in \C$ and $t$ is torsion.  Pick any $m > 0$, and let $V$ be the orbit closure of $y$ under $\Theta^{(m)}$.  If $|w|_\C \ge 1/2$ then $V(1) = \T$.
\end{lemma}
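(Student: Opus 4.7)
The plan is to reduce to a subsemigroup $\Theta^{(m')} \subseteq \Theta^{(m)}$ that fixes the torsion point $t$ pointwise, and then exploit the fact that $\Theta^{(m')}$ acts by a dense set of rotations on the unit circle. The hypothesis $|w|_\C \ge 1/2$ is used exactly to ensure that the projection $\Re$ of a circle of radius $|w|_\C$ wraps fully around $\T$.

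First I would choose $m'$. By Proposition \ref{periodic-points} \ref{enum:torsion-equiv} (cond.~\ref{orbit-finite-cond}), the orbit $\Theta(t)$ is finite, so there is some positive integer $m'$, which I take to be a multiple of $m$, such that $\theta_5^{m'} t = \theta_{13}^{m'} t = t$; hence $\Theta^{(m')}$ fixes $t$ pointwise. Since $\Theta^{(m')} \subseteq \Theta^{(m)}$, the orbit closure $V_0 := \overline{\Theta^{(m')}(y)}$ is contained in $V$, and by Proposition \ref{solenoid-general-facts} \ref{enum:action} the action is given on the complex coordinate by multiplication; thus for $\theta \in \Theta^{(m')}$,
\[
  \theta(y) \;=\; \jmath_\C(\theta w) + t.
\]
Evaluating at $1 \in A$ and writing $\alpha := t(1) \in \T$, I get
\[
  \theta(y)(1) \;=\; \Re(\theta w) + \alpha \pmod 1.
\]

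Next I would observe that $\{\theta w : \theta \in \Theta^{(m')}\}$ is dense in the circle $\{z \in \C : |z|_\C = |w|_\C\}$. This follows from the fact that $\theta_5^{m'}$ is not a root of unity: the only roots of unity in $\Q(i)$ are $\pm 1, \pm i$, whereas $\theta_5^{m'}$ has coordinates with $5$ in the denominator (in lowest terms). Hence $\{\theta_5^{m'r} : r \ge 0\}$ is already dense in the unit circle by the classical equidistribution of irrational rotations, so \emph{a fortiori} $\Theta^{(m')}$ is dense in the unit circle. Multiplying by $w$ gives density on the circle of radius $|w|_\C$.

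Finally, $\Re$ maps that circle continuously onto $[-|w|_\C, |w|_\C]$, and under the hypothesis $|w|_\C \ge 1/2$ this interval has length at least $1$, so its image in $\T = \R/\Z$ is all of $\T$. Therefore
\[
  \bigl\{\Re(\theta w) + \alpha \pmod 1 : \theta \in \Theta^{(m')}\bigr\}
\]
is dense in $\T$. Since $V$ is closed in $\widehat{A}$ (hence compact), the evaluation-at-$1$ map sends $V$ to a closed subset of $\T$ containing $V_0(1)$, which by the above contains a dense subset of $\T$. So $V(1) = \T$, as required. There is no real obstacle here; the only point that requires any care is the elementary verification that $\theta_5^{m'}$ is not a root of unity, which amounts to noting that $\PP_5/\bar{\PP_5}$ does not lie in $\{\pm 1,\pm i\}$.
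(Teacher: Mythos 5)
Your overall strategy is the paper's: restrict to a subsemigroup fixing the torsion part, use that $\theta_5$ is not a root of unity to get density on the circle of radius $|w|_\C$, and note that $\Re$ of that circle covers $\T$ once $|w|_\C \ge 1/2$. However, there is a genuine gap in your first step: the inference from ``the orbit $\Theta(t)$ is finite'' to ``some $\Theta^{(m')}$ fixes $t$'' is false. The action of $\Theta$ on $\widehat{A}$ is a non-invertible semigroup action ($1/\theta_5 \notin A$, so $\rho(\theta_5)$ is a $5$-to-$1$ covering, not a bijection), and a finite orbit only gives \emph{eventual} periodicity of $t$, not periodicity of $t$ itself. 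Concretely, take $t = \jmath(\imath^\Delta(1/\PP_5))$: this is torsion by Proposition \ref{periodic-points}\ref{enum:torsion-equiv}, but $|1/\PP_5|_{\PP_5} > 1$, so by Proposition \ref{periodic-points}\ref{enum:periodic-equiv} it is not periodic; indeed applying any nontrivial $\theta \in \Theta$ strictly decreases $|\cdot|_{\PP_5}$, so no $\theta \ne 1$ returns $t$ to itself (here $\theta_5(t) = 0$, and $0$ maps to $0$ forever).

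The repair is exactly the paper's Corollary \ref{torsion-nearly-implies-periodic}: first choose $\theta_0 \in \Theta^{(m)}$ so that $\theta_0(t)$ \emph{is} periodic (clearing the $\PP_5$- and $\PP_{13}$-parts of the denominator), then choose $m'$ a multiple of $m$ with $\Theta^{(m')}$ fixing $\theta_0(t)$, and run your argument on $\Theta^{(m')}(\theta_0(y)) \subseteq \Theta^{(m)}(y)$ instead of on $\Theta^{(m')}(y)$. Since $|\theta_0 w|_\C = |w|_\C$, the density and projection steps of your proof go through verbatim with $w$ replaced by $\theta_0 w$, and the rest of your write-up (the non-root-of-unity verification, the length-$\ge 1$ interval, and the compactness of $V$ to pass from dense to all of $\T$) is correct.
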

\begin{proof}
  By Corollary \ref{torsion-nearly-implies-periodic} we can choose $\theta_0 \in \Theta$ such that $\theta_0(t)$ is periodic.  If $x$ is periodic then so is $\theta(x)$ for any $\theta \in \Theta$, and so we can adjust $\theta_0$ to lie in $\Theta^{(m)}$.  Choose $m' > 0$ such that $m | m'$ and $\Theta^{(m')}$ fixes $\theta_0(t)$.

  We have $\Theta^{(m)} (y) \supseteq \Theta^{(m')} (\theta_0(y)) = \theta_0(t) + \Theta^{(m')} (\jmath_\C(\theta_0\, w))$.  Since $\Theta^{(m')}$ contains all the powers of some irrational rotation (such as ${\theta_5}^{m'}$) and $\theta_0 \, w$ is just some complex number, $\{\theta\, \theta_0\, w \,:\, \theta \in \Theta^{(m')}\}$ is dense on the circle $\{ z \in \C \,:\, |z| = |w| \}$.  So, $V$ contains $S = \theta_0(t) + \jmath_\C\left(\{ z \in \C \,:\, |z| = |w| \}\right)$.

  So, $V(1)$ contains $S(1) = t(\theta_0) + \{ \Re(z) \,:\, z \in \C,\,  |z| = |w| \}$.  The latter is all of $\T$ provided $|w| \ge 1/2$, as required.
\end{proof}

\begin{proof}[Proof of Lemma \ref{torsion-limit-lemma}]
  Let $V$ denote the orbit closure of $y$; clearly all points of $V$ are of the form $\jmath_\C(w') + t'$ where $|w'| = |w|$ and $t'$ is torsion.  By Corollary \ref{zero-limit-cor} we can find $z \in Y$ and $v = \jmath_\C(w') + t' \in V$ such that $z - v = \jmath_\C(100 + |w|)$.  So $z = \jmath_\C(w' + 100 + |w|) + t'$ has the form required by Lemma \ref{big-circles}, and clearly $|w' + 100 + |w|| \ge 100 \ge 1/2$ as $|w'| = |w|$.  Since $Y$ contains the orbit closure $\bar{\Theta^{(m)}(z)}$ , the conclusion of Lemma \ref{big-circles} applied to $z$ gives the result.
\end{proof}

\subsection{The main argument}

Finally, we finish the proof of the infinitary rationality lemma (Lemma \ref{infinitary-rationality-lemma}).  This part of the argument follows the proof of Theorem \ref{tttt} almost line for line, which is possible only by repeatedly invoking results from Section \ref{sec-solenoid}.

\begin{proof}[Proof of Lemma \ref{infinitary-rationality-lemma}]
  Let $Y \subseteq \widehat{A}$ be a closed invariant subset, and assume $Y$ is \emph{not} a finite union of complex balls with centers at torsion points.  Indeed, if it were, then either Lemma \ref{big-circles} gives $Y(1) = \T$ or $Y$ is contained in a set of the form $U = \bigcup_{i=1}^k \bar{B}_{1/2}^\C(u_i)$ as required.
  
  By Proposition \ref{complex-by-torsion-lemma} \ref{enum:cbf-implies-cbt} and Proposition \ref{infinite-limit-lemma}, $Y'$ is non-empty, and it is closed and invariant by Proposition \ref{limit-is-closed}.  Also, we may assume $Y'$ does not contain any points of the form $x = \jmath_\C(w) + t$ with $w \in \C$ and $t$ torsion, as this would imply $Y(1) = \T$ by Lemma \ref{torsion-limit-lemma}.

  Fix an arbitrary $\delta > 0$ and choose $m$ and $S \subseteq \widehat{A}$ according to Proposition \ref{many-periodic-points}; that is, $S$ consists of fixed points of $\Theta^{(m)}$ and is $\delta$-dense in $\widehat{A}$.  Assume for convenience that $0 \in S$, and write $S = \{a_0, a_1, \dots, a_{\ell - 1} \}$ where $a_0 = 0$.  For $0 \le k < \ell$, define
  $X_k := (Y' - a_0) \cap \dots \cap (Y' - a_k)$.
  Note that $X_0 = Y'$, $X_k$ are nested, and furthermore
  $
    X_{k+1} = X_k \cap \left( Y' - a_{k+1} \right)
  $
  which implies (inductively) that $X_k$ is closed and $\Theta^{(m)}$-invariant for all $k$ (recalling that $\Theta^{(m)}$ fixes the $a_i$).
  
  We claim inductively that $X_k$ is non-empty; since $X_k \subseteq Y'$ and $Y'$ contains no points $x = \jmath_\C(w) + t$ where $w \in \C$ and $t$ is torsion, this immediately implies $X_k$ is not contained in a finite union of complex balls (Proposition \ref{complex-by-torsion-lemma}\ref{enum:cbf-iff-cpt}).  The base case $k = 0$ is by assumption; suppose this holds for $X_k$.  By Proposition \ref{infinite-limit-lemma}, $X_k - X_k$ has $0$ as a non-Archimedean limit point, and hence so does $Y' - X_k$.  By  Lemma \ref{zero-limit-lemma}, $Y' - X_k = \widehat{A}$ and in particular we can find $x \in X_k$, $y \in Y'$ such that $y - x = a_{k+1}$.  I.e.~$x \in \left(Y' - a_{k+1}\right)$ and as $x \in X_k$ we deduce $x \in X_{k+1}$, completing the induction.

  Hence for any $x \in X_{\ell-1}$, we have $\{x + a_0,\, x + a_1,\, \dots,\, x + a_{\ell - 1}\} \subseteq Y' \subseteq Y$.  So $Y$ contains a translate of $S$ and hence is $\delta$-dense in $\widehat{A}$.  But $\delta > 0$ was arbitrary and $Y$ is closed; hence $Y = \widehat{A}$ and \emph{a fortiori} $Y(1) = \T$.
\end{proof}

\appendix

\section{Proofs of auxiliary results}
\label{appA}

Here we provide proofs of some of the less standard and more technical results of Section \ref{sec-solenoid}.

\subsection{Torsion points and periodic points}
\begin{proof}[Proof of Proposition \ref{periodic-points}]
  Consider \ref{enum:torsion-equiv}.  Unwrapping the definitions, we have that $n\, x = 0$ for $n \in \Z$ if and only if there is some $r \in A$ such that $n\, z = \imath_\C(r)$, $n\, a = \imath_{\Q_5}(r)$, $n\, b = \imath_{\Q_{13}}(r)$.  Setting $q = r / n$ gives (\ref{torsion-cond} $\Rightarrow$ \ref{torsion-rationality-cond}) and choosing $n \in \Z$ such that $n\, q \in A$ gives (\ref{torsion-rationality-cond} $\Rightarrow$ \ref{torsion-cond}).
  
  For (\ref{orbit-finite-cond} $\Rightarrow$ \ref{torsion-cond}), observe that by pigeonhole there exist $\phi \ne \psi \in \Theta$ such that $\phi(x) = \psi(x)$, whence $(\phi - \psi) (x) = 0$ and so, multiplying by $a \in \Z[i]$ such that $a (\phi - \psi) \in \Z$, we deduce $x$ is torsion.
  
  Finally, for (\ref{torsion-cond}, \ref{torsion-rationality-cond} $\Rightarrow$ \ref{orbit-finite-cond}) we note that $n\, x = 0 \Rightarrow n \theta (x) = 0$ for $\theta \in \Theta$, so it suffices to check that, for given $n$, the set
  $
    \{ x \in \widehat{A} \,:\, n\, x = 0 \}
  $
  is finite.  This is actually a group, and by (\ref{torsion-rationality-cond}) is isomorphic under $\imath^\Delta$ to the quotient
   $ 
    \{ q \in \Q(i) \,:\, n\, q \in A \} / A
   $,
   which in turn is isomorphic (under $q \mapsto n q$) to $A / n A$.  That this last group is finite is a standard fact.
  \\[\baselineskip]
  We now turn to \ref{enum:periodic-equiv}.  For (\ref{fixed-point-cond} $\Rightarrow$ \ref{periodic-rationality-cond}), we write $\phi = (\theta_{5}\, \theta_{13})^m$ and suppose $\phi(x) = x$.  That means there exists $r \in A$ such that $\imath_\C(\phi - 1) z = \imath_\C(r)$, $\imath_{\Q_5}(\phi - 1) a = \imath_{\Q_5}(r)$, $\imath_{\Q_{13}}(\phi - 1) b = \imath_{\Q_{13}}(r)$, so taking $q = r / (\phi - 1)$ it suffices to check that $|q|_{\PP_5},\,  |q|_{\PP_{13}} \le 1$.  Note $|r|_{\PP_5},\, |r|_{\PP_{13}} \le 1$ (since $r \in A$) and $|\phi|_{\PP_5},\, |\phi|_{\PP_{13}} < 1$ since $m > 0$, so $|\phi - 1|_{\PP_5},\, |\phi - 1|_{\PP_{13}} = 1$.
  
  For (\ref{periodic-rationality-cond} $\Rightarrow$ \ref{fixed-point-cond}), we observe as above that the orbit $\Theta(x)$ is contained in
  \[
    \{ x \in \widehat{A} \,:\, n\, x = 0 \} \cong A / n\, A
  \]
  for some $n \in \Z$, which crucially can now be taken to be coprime to $65$.  It follows that the corresponding multiplicative action of $\Theta$ on the ring $A / n\, A$ is now invertible, i.e.~the image of $\Theta$ lies in $(A / n\, A)^\times$. Hence we can choose $m$ to be $|(A / n\, A)^\times|$.
\end{proof}

\subsection{Unions of complex balls}
\begin{proof}[Proof of Proposition \ref{complex-by-torsion-lemma}]
  We prove \ref{enum:cbf-implies-cbt}.  Let $x_1, \dots, x_k \in Y$ be such that $\bigcup_i \bar{B}_1^\C(x_i)$ covers $Y$.   It suffices to show each $x_i$ has the form $x_i = y_i + \jmath_\C(z_i)$ where $y_i$ is torsion and $z_i \in \C$.

  By pigeonhole, there exist distinct $\theta_1, \theta_2 \in \Theta^{(m)}$ such that $\theta_1(x_i)$ and $\theta_2(x_i)$ lie in the same ball $\bar{B}_1^\C(x_j)$.  Hence $\theta_1(x_i) - \theta_2(x_i) \in \jmath_{\C}(\C)$.  Choosing $r \in \Z[i]$ such that $r (\theta_1 - \theta_2) = n \in \Z$ we obtain $n\, x_i \in \jmath_{\C}(\C)$ as required.
  \\[\baselineskip]
  Now consider \ref{enum:cbf-iff-cpt}. $\Rightarrow$ follows trivially from \ref{enum:cbf-implies-cbt}: $\bar{\Theta^{(m)}(x)}$ is contained in a finite union of complex balls, is closed and invariant, and hence is contained in a finite union of complex balls with torsion centers, which \emph{a fortiori} implies $x$ itself has the specified form.

  For $\Leftarrow$, we invoke Proposition \ref{periodic-points} \ref{enum:torsion-equiv}(\ref{torsion-cond} $\Rightarrow$ \ref{orbit-finite-cond}) and note the trivial fact that the orbit closure of a point $\jmath_\C(w)$, $w \in \C$ is contained in (in fact, is equal to) $\jmath_\C\left(\{ z \in \C \,:\, |z| = |w| \}\right)$, which is certainly contained in some $\bar{B}_R^\C(0)$.
\end{proof}

\needspace{3\baselineskip}
\subsection{Non-Archimedean limits}
\begin{proof}[Proof of Proposition \ref{infinite-limit-lemma}]
  By the hypothesis on $Y$, we can pick a sequence $x_i \in Y$ such that $x_i - x_j \notin \bar{B}_{1}^\C(0)$ for all $i \ne j$.  Since $Y$ is compact metric, passing to some subsequence we have $x_i \rightarrow x$ for some $x \in Y$.
  
  It would suffice to show that $x_i - x \notin \bar{B}_{1/2}^\C(0)$ for all $i$, as then $x_i \rightarrow x$ and $x_i - x \rightarrow 0$ would be non-Archimedean limits in $Y$ and $Y - Y$ respectively.

  In fact this holds for all $i$ with at most one exception: if $x_i - x \in \bar{B}_{1/2}^\C(0)$ and $x_j - x \in \bar{B}_{1/2}^\C(0)$ for $i \ne j$ then $x_i - x_j \in \bar{B}_1^\C(0)$, contradicting the choice of the $x_i$.  Deleting the exceptional index (if there is one) gives the result.
\end{proof}

\subsection{Density of $\C$, $\Q_5$ and $\Q_{13}$ in $\widehat{A}$}

%
%
%
%

We need a standard fact for the proof of Proposition \ref{strong-strong-approx}.
\begin{proposition}
  \label{finite-index-mult-subgroup-lemma}
  Any finite index multiplicative subgroup of $\Z_p^\times$ contains a subgroup $(1 + p^n \Z_p, \times)$ for some $n$.
\end{proposition}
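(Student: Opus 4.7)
The plan is to reduce the claim to the analogous, essentially trivial statement for the additive group $(\Z_p, +)$, by transporting multiplicative structure near $1$ to additive structure via the $p$-adic logarithm.

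First, I would set up notation: write $U_n = 1 + p^n \Z_p \le \Z_p^\times$ for $n \ge 1$, so that the $U_n$ form a descending chain of open subgroups giving a neighbourhood basis of $1$ in $\Z_p^\times$. The desired conclusion is therefore equivalent to the assertion that any finite-index subgroup $H \le \Z_p^\times$ is in fact open.

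Next, I would import the standard fact that for $n \ge n_0$ (with $n_0 = 1$ if $p$ is odd and $n_0 = 2$ if $p = 2$), the series $\log(1 + x) = \sum_{k \ge 1} (-1)^{k-1} x^k / k$ converges on $p^n \Z_p$ and defines a topological group isomorphism $\log \,:\, (U_n, \times) \xrightarrow{\sim} (p^n \Z_p, +)$. In particular each $U_n$ (for $n \ge n_0$) is isomorphic as a topological group to $(\Z_p, +)$.

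With these tools in place, pick any $n \ge n_0$ and consider $H \cap U_n$, which has finite index in $U_n$. Transporting via $\log$, this corresponds to a finite-index subgroup of $(\Z_p, +)$. The key observation is that every finite-index subgroup of $(\Z_p, +)$ is of the form $p^k \Z_p$ for some $k \ge 0$: any finite quotient of $\Z_p$ must be a $p$-group, because multiplication by any prime $\ell \ne p$ is invertible on $\Z_p$ and hence on any quotient; and any surjection $\Z_p \twoheadrightarrow \Z/p^k\Z$ factors through reduction mod $p^k$, whose kernel is $p^k \Z_p$. Pulling back through $\log$ gives $H \cap U_n = U_{n+k}$ for some $k \ge 0$, and in particular $U_{n+k} \subseteq H$, as required. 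The only real obstacle is admitting the $p$-adic logarithm as a black box; everything after that is routine.
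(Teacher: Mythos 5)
Your proposal is correct and follows essentially the same route as the paper: both reduce the statement to the classification of finite-index subgroups of $(\Z_p,+)$ via the $p$-adic $\log$/$\exp$ isomorphism between $(1+p^n\Z_p,\times)$ and $(p^n\Z_p,+)$. The only difference is cosmetic --- you also handle $p=2$ by starting at $n_0=2$, whereas the paper restricts to $p>2$, which suffices for its application.
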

\begin{proof}
  There is an isomorphism $(1 + p \Z_p, \times) \longleftrightarrow (\Z_p, +)$ given by the $p$-adic $\log$ and $\exp$ maps (for $p > 2$); see e.g. \cite{serre}[Chapter II, Section 3, Proposition 8].

  So if $J \le \Z_p^\times$ has finite index then $J \cap (1 + p \Z_p)$ has finite index in $1 + p \Z_p$, which corresponds to a finite index subgroup of $(\Z_p, +)$ under the isomorphism; those are all $(p^n \Z_p, +)$ for some $n$, which correspond to $(1 + p^{n+1} \Z_p, \times)$ under the isomorphism the other way.
\end{proof}

\begin{cor}
  \label{cosets-are-open}
  Let $J$ be a finite index subgroup of $\Q_p^\times$.  There exists some $\eta > 0$ depending only on $J$, such that if $x, y \in \Q_p^\times$ and $|x - y|_p / |x|_p \le \eta$ then $x$ and $y$ are in the same coset of $J$.
\end{cor}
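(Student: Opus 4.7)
The plan is to reduce the multiplicative statement about cosets of $J$ in $\Q_p^\times$ to a statement about the neighborhood of $1$ in $\Z_p^\times$, and then apply Proposition \ref{finite-index-mult-subgroup-lemma} directly. The key normalization: for $x, y \in \Q_p^\times$, the condition $|x - y|_p/|x|_p \le \eta$ is exactly $|y/x - 1|_p \le \eta$. So the hypothesis says nothing other than that $y/x$ is $\eta$-close to $1$ in $\Q_p$, and the conclusion ``$x$ and $y$ lie in the same coset of $J$'' is equivalent to $y/x \in J$.

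First I would observe that $J \cap \Z_p^\times$ has finite index in $\Z_p^\times$: this is the standard fact that if $H \le G$ has finite index and $K \le G$ is any subgroup then $[K : K \cap H] \le [G : H]$. Applying Proposition \ref{finite-index-mult-subgroup-lemma} (which only needs $p > 2$, which is fine for $p \in \{5, 13\}$), we obtain an integer $n \ge 1$ with $1 + p^n \Z_p \subseteq J \cap \Z_p^\times \subseteq J$.

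Now set $\eta = p^{-n}$. Suppose $x, y \in \Q_p^\times$ with $|y/x - 1|_p \le \eta$. Writing $y/x = 1 + u$, we have $|u|_p \le p^{-n}$, so $u \in p^n \Z_p$ and hence $y/x \in 1 + p^n \Z_p \subseteq J$. Therefore $y \in x J$, i.e.\ $x$ and $y$ represent the same coset.

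There is really no obstacle here; the only mild subtlety is checking that $\eta$ depends only on $J$ (and not on $x$, $y$), which is transparent from the construction since $n$ is determined purely by $J$ via Proposition \ref{finite-index-mult-subgroup-lemma}. This corollary is essentially a reformulation of that proposition combined with the translation from ``close to $1$ multiplicatively'' to ``close to $x$ relatively'', which is the content of the normalization $|x - y|_p / |x|_p$.
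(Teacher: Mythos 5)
Your proof is correct and follows essentially the same route as the paper: reduce to showing $y/x \in J$ via the identity $|x-y|_p/|x|_p = |1 - y/x|_p$, then apply Proposition \ref{finite-index-mult-subgroup-lemma} to $J \cap \Z_p^\times$. Your explicit choice $\eta = p^{-n}$ also handles cleanly the small point (which the paper addresses by requiring $\eta \le 1$) that $y/x$ is automatically a unit, since $1 + p^n\Z_p \subseteq \Z_p^\times$.
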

\begin{proof}
  We require $y / x \in J$.  By Proposition \ref{finite-index-mult-subgroup-lemma} applied to the finite index subgroup $J_0 = J \cap \Z_p^\times \le \Z_p^\times$, there is some $\eta > 0$ such that if $a \in \Z_p^\times$ and $|1 - a|_p \le \eta$ then $a \in J_0$.  So -- provided $\eta \le 1$ -- we have $|1 - a|_p \le \eta \Rightarrow a \in J_0$ for all $a \in \Q_p^\times$.  But $|1 - y / x|_p = |x - y|_p / |x|_p$ and the result follows.
\end{proof}

We isolate one further result from the proof of Proposition \ref{strong-strong-approx}.
\begin{lemma}
  \label{small-coset-element-finder}
  Let $J,\, H$ be as in the statement of Proposition \ref{strong-strong-approx}.  For any $\mu, \nu > 0$ we can find an $r \in A$ such that $|r|_\C,\, |r|_{\bar{\PP_{13}}} \le \mu$, $|r|_{\bar{\PP_5}} \ge \nu$ and $\imath_{\Q_5}(r) \in H$.  The same holds with $5$ and $13$ exchanged.
\end{lemma}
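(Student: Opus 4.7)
\emph{Plan.} The strategy is to build $r$ in two stages. First, find a ``base point'' $r_0 \in A$ whose $\Q_5$-image already lies in $H$; second, multiply by an auxiliary element $u \in A$ whose $\Q_5$-image lies in $J$ (so the coset $H$ is preserved) and which simultaneously makes $|\cdot|_\C$ and $|\cdot|_{\bar{\PP_{13}}}$ small while making $|\cdot|_{\bar{\PP_5}}$ large.

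The first stage is immediate from Proposition \ref{strong-approx} together with Corollary \ref{cosets-are-open}: pick any $h_0 \in H$ and apply Proposition \ref{strong-approx} with complex target $0$ and $\Q_5$-target $h_0$ to obtain $r_0 \in A$ with $\imath_{\Q_5}(r_0)$ arbitrarily close to $h_0$.  A close enough approximation guarantees $\imath_{\Q_5}(r_0) \in H$ by Corollary \ref{cosets-are-open}.

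For the second stage, let $I = [\Q_5^\times : J]$.  By Lagrange's theorem applied to the finite quotient $\Q_5^\times / J$, we have $\alpha^I \in J$ for every $\alpha \in \Q_5^\times$; in particular $\imath_{\Q_5}(\bar{\PP_5})^I,\, \imath_{\Q_5}(\bar{\PP_{13}})^I \in J$.  Take
\[
  u = \bar{\PP_{13}}^{I n'} / \bar{\PP_5}^{I M'} \in A
\]
for positive integers $n', M'$ to be chosen.  A direct computation using $|\bar{\PP_{13}}|_\C = \sqrt{13}$, $|\bar{\PP_{13}}|_{\bar{\PP_5}} = 1$, $|\bar{\PP_{13}}|_{\bar{\PP_{13}}} = 1/13$ and the symmetric values for $\bar{\PP_5}$ yields $|u|_\C = 13^{I n'/2}/5^{I M'/2}$, $|u|_{\bar{\PP_5}} = 5^{I M'}$ and $|u|_{\bar{\PP_{13}}} = 13^{-I n'}$; moreover $\imath_{\Q_5}(u) \in J$, so $r = r_0\, u \in A$ satisfies $\imath_{\Q_5}(r) \in H \cdot J = H$.

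To close the argument, first pick $n'$ large enough that $|r|_{\bar{\PP_{13}}} = |r_0|_{\bar{\PP_{13}}} \cdot 13^{-I n'} \le \mu$; then, with $n'$ fixed, pick $M'$ large enough that $|r|_\C = |r_0|_\C \cdot 13^{I n'/2} / 5^{I M'/2} \le \mu$.  The bound $|r|_{\bar{\PP_5}} = |r_0|_{\bar{\PP_5}} \cdot 5^{I M'} \ge \nu$ is then automatic for $M'$ sufficiently large, and the ``$5$ and $13$ swapped'' version follows by the symmetric construction.  The whole argument is essentially book-keeping, and I do not anticipate any serious obstacle; the one substantive idea is raising to the $I$-th power to force elements into $J$, which decouples the coset constraint from the (easy) problem of controlling the three absolute values via the two free parameters $n'$ and $M'$.
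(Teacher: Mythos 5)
Your proof is correct, and it rests on the same basic mechanism as the paper's: both produce $r$ by multiplying a fixed base element of $A$ by a high power of an element of the shape $\bar{\PP_{13}}^a/\bar{\PP_5}^b$, whose archimedean and $\bar{\PP_{13}}$-adic absolute values are small while its $\bar{\PP_5}$-adic absolute value is large. The two arguments differ, however, in how they arrange the coset condition. The paper fixes a full set of coset representatives $x_1,\dots,x_k \in A$ for $J$ in advance, bounds all their absolute values by a single constant $C$, and then, \emph{after} choosing the power $y^n$, selects whichever representative $x_i$ makes $x_i y^n$ land in $H$; the multiplier itself need not lie in $J$. You instead place the base point $r_0$ in $H$ from the outset (via Proposition \ref{strong-approx} and Corollary \ref{cosets-are-open}) and force the multiplier into $J$ by raising everything to the power $I = [\Q_5^\times : J]$, so that coset membership is preserved automatically by Lagrange's theorem in the abelian quotient $\Q_5^\times/J$. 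Your version decouples the coset constraint from the (easy) valuation bookkeeping somewhat more cleanly, and the use of two independent exponents $n', M'$ makes the final limiting argument transparent; the paper's version avoids $I$-th powers but needs the uniform constant $C$ over all representatives. Both are complete. The only point worth making explicit in yours is that $\imath_{\Q_5}(r_0)\neq 0$, so that Corollary \ref{cosets-are-open} (which is stated for elements of $\Q_5^\times$) actually applies; this is automatic once the $5$-adic approximation error is taken smaller than $|h_0|_5$.
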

\begin{proof}
  Let $x_1, \dots, x_k \in A$ be coset representatives for $J$; i.e.~$\bigcup_i x_i\, J = \Q_5^\times$.  (This is possible by Corollary \ref{cosets-are-open} and the fact that $A$ is dense in $\Q_p^\times$.) Let $C = \max \left\{ |x_i|_\C,\, |x_i|_{\bar{\PP_{13}}},\, 1 / |x_i|_{\bar{\PP_5}} \right\}$.  Pick any $y \in A$ such that $|y|_\C,\, |y|_{\bar{\PP_{13}}} < 1$ and $|y|_{\bar{\PP_5}} > 1$ (say, $y = \bar{\PP_{13}} / \bar{\PP_5}^{10}$).  Then choose $n$ large enough that $|y^n|_\C,\, |y^n|_{\bar{\PP_{13}}} \le \mu / C$ and $|y^n|_{\bar{\PP_5}} \ge C\, \nu$.  Now take $i$ such that $x_i\, y^n \in H$, and observe $r = x_i\, y^n$ has the desired properties.
\end{proof}

\begin{proof}[Proof of Proposition \ref{strong-strong-approx}]
  Let $x = \jmath(z, a, b) \in \widehat{A}$, with $(z, a, b)$ in the fundamental domain (say).  We take $\delta > 0$ arbitrary, and choose $q \in A$ such that $|\imath_\C(q) - z|_\C,\, |\imath_{\Q_{13}}(q) - b|_{13} \le \delta / 4$ (by Proposition \ref{strong-approx}).

  If $a - \imath_{\Q_5}(q) \in H$ we are happy, as then
  \[
    d_{\widehat{A}}((z, a, b),\, (0,\, a - \imath_{\Q_5}(q),\, 0)) = d_{\widehat{A}}((z, a, b),\, (\imath_\C(q),\, a,\, \imath_{\Q_{13}}(q)) \le \delta / 2 \ .
  \]
  If not, the above results allow us to perturb $q$ to some $q - r$ so that this holds (i.e.~$a - \imath_{\Q_5}(q - r) \in H$), without affecting the other properties of $q$ too much.
  
  Specifically, take $\eta > 0$ as in Corollary \ref{cosets-are-open}, and $r$ as in Lemma \ref{small-coset-element-finder} with parameters $\mu = \delta / 4$ and $\nu = 10 |a - \imath_{\Q_5}(q)|_5 / \eta$.  We claim $a - \imath_{\Q_5}(q - r) \in H$.  Indeed, as $\imath_{\Q_5}(r) \in H$ by construction and $|a - \imath_{\Q_5}(q)|_5 / |\imath_{\Q_5}(r)|_5 \le \eta / 10$, this follows by Corollary \ref{cosets-are-open}.  Finally,
  \begin{align*}
    d_{\widehat{A}}((z, a, b),\, (0, a - \imath_{\Q_5}(q - r), 0)) &= d_{\widehat{A}}((z, a, b),\, (\imath_\C(q - r),\, a,\, \imath_{\Q_{13}}(q - r)) \\
                                                                   & \le |z - \imath_\C(q)|_\C + |r|_\C + |\imath_{\Q_{13}}(q) - b|_{\bar{\PP_{13}}} + |r|_{\bar{\PP_{13}}} \le \delta
  \end{align*}
  which, as $\delta$ was arbitrary, completes the proof.
\end{proof}



\subsection{Proof of Proposition \ref{topological-generation}}

\begin{proof}[Proof of Proposition \ref{topological-generation}]
  Using \cite{serre}[{Chapter II, Section 3, Proposition 7}], we write $u = \xi \, v$ where $\xi$ is a $(p-1)^{\mathrm{st}}$ root of unity and $v \in 1 + p \Z_p$.  By assumption $v \ne 1$.  It suffices to show $\bar{\{v^{(p-1)r} :\, r \ge 0\}}$ is a finite index subgroup of $(1 + p \Z_p, \times)$.  Using the $p$-adic $\log$/$\exp$ isomorphism (\cite{serre}[Chapter II, Section 3, Proposition 8]) this reduces to showing that for $x \in \Z_p$ non-zero, the closure $\bar{\{(p-1) r\, x \,:\, r \ge 0\}}$ is a finite index subgroup of $(\Z_p, +)$; and in fact it is precisely $p^{v_p(x)} \Z_p$.
\end{proof}

\bibliography{pyjama}{}
\bibliographystyle{halpha}

\end{document}